\numberwithin{equation}{section}
\setlist[itemize]{labelindent=\parindent,leftmargin=*}
\setlist[enumerate,1]{leftmargin=*,label=(\roman*),ref=\roman*}
\setlist[enumerate,2]{labelindent=\parindent,leftmargin=*,label=(\arabic*),ref=\arabic*}
\theoremstyle{plain}
\newtheorem{thm}{Theorem}[section]
\newtheorem{lem}[thm]{Lemma}
\newtheorem{prop}[thm]{Proposition}
\newtheorem{cor}[thm]{Corollary}
\theoremstyle{remark}
\newtheorem{rem}[thm]{Remark}
\newcommand\diag{\operatorname{diag}}
\newcommand\Hom{\operatorname{Hom}}
\newcommand\Ind{\operatorname{Ind}}
\newcommand\Irr{\operatorname{Irr}}
\newcommand\Res{\operatorname{Res}}
\newcommand\Tr{\operatorname{Tr}}
\newcommand\down{\mathrm{down}}
\newcommand\GL{\mathrm{GL}}
\newcommand\Mp{\mathrm{Mp}}
\newcommand\SL{\mathrm{SL}}
\newcommand\Sp{\mathrm{Sp}}
\newcommand\U{\mathrm{U}}
\newcommand\up{\mathrm{up}}
\newcommand\A{\mathbb{A}}
\newcommand\C{\mathbb{C}}
\newcommand\E{\mathbb{E}}
\newcommand\F{\mathbb{F}}
\newcommand\Q{\mathbb{Q}}
\newcommand\R{\mathbb{R}}
\newcommand\V{\mathbb{V}}
\newcommand\W{\mathbb{W}}
\newcommand\X{\mathbb{X}}
\newcommand\Y{\mathbb{Y}}
\newcommand\Z{\mathbb{Z}}
\renewcommand\b{\mathfrak{b}}
\newcommand\g{\mathfrak{g}}
\renewcommand\k{\mathfrak{k}}
\renewcommand\l{\mathfrak{l}}
\newcommand\q{\mathfrak{q}}
\renewcommand\t{\mathfrak{t}}
\renewcommand\u{\mathfrak{u}}
\renewcommand\AA{\mathcal{A}}
\newcommand\CC{\mathcal{C}}
\newcommand\LL{\mathcal{L}}
\renewcommand\SS{\mathcal{S}}
\newcommand\VV{\mathcal{V}}
\newcommand\XX{\mathcal{X}}
\newcommand\ZZ{\mathcal{Z}}
\newcommand\1{\mathbf{1}}
\newcommand{\hotimes}{\mathbin{\hat{\otimes}}}
\newcommand{\nA}{{\mathstrut}^n \! A}
\title{Theta lifting for tempered representations of real unitary groups}
\author{Atsushi Ichino}
\address{Department of Mathematics, Kyoto University, Kitashirakawa Oiwake-cho, Sakyo-ku, Kyoto 606-8502, Japan}
\email{ichino@math.kyoto-u.ac.jp}
\begin{document}

\maketitle

\begin{abstract}
We study the theta lifting for real unitary groups and completely determine the theta lifts of tempered representations.
In particular, we show that the theta lifts of (limits of) discrete series representations can be expressed as cohomologically induced representations in the weakly fair range.
This extends a result of J.-S.~Li in the case of discrete series representations with sufficiently regular infinitesimal character, whose theta lifts can be expressed as cohomologically induced representations in the good range.
\end{abstract}

\section{Introduction}

In his seminal papers \cite{howe1, howe2}, Howe introduced the notion of reductive dual pairs and developed the theory of theta lifting, which has been an important subject in the representation theory of real and $p$-adic reductive groups for more than $40$ years and which has many arithmetic applications to the theory of automorphic forms.
The theta lifting is defined as a correspondence between representations of the two groups in a reductive dual pair in terms of the restriction of the Weil representation \cite{weil1}.
In fact, it is shown that this correspondence is one-to-one by Howe himself \cite{howe2} in the real case and by Gan--Takeda \cite{gt} in the $p$-adic case, following earlier work of Howe \cite{howe1} and Waldspurger \cite{wal} for $p \ne 2$.
For the history and recent development of the theta lifting, the reader can consult the ICM report of Gan \cite{gan}.

In the theory of theta lifting, one of the basic problems is to describe it explicitly.
We consider this problem in the real case, which has been studied by M{\oe}glin \cite{moeglin89}, Li \cite{li90}, Adams--Barbasch \cite{ab1, ab2}, Paul \cite{paul1, paul2, paul3}, Li--Paul--Tan--Zhu \cite{lptz} to mention a few, but which has not been solved in general.
In this paper, we focus on the case of the reductive dual pair $(\U(p,q),\U(r,s))$ consisting of real unitary groups.
Recall the Weil representation $\omega$ of $\Mp_{2l}(\R)$ (relative to a fixed nontrivial character of $\R$), where $l = (p+q)(r+s)$ and $\Mp_{2l}(\R)$ is the metaplectic cover of the symplectic group $\Sp_{2l}(\R)$ of rank $l$.
Via the choice of a lift
\begin{align*}
 \U(p,q) \times \U(r,s) & \rightarrow \Mp_{2l}(\R) \\
 \shortintertext{of a natural homomorphism}
 \U(p,q) \times \U(r,s) & \rightarrow \Sp_{2l}(\R), 
\end{align*}
we may regard $\omega$ as a representation of $\U(p,q) \times \U(r,s)$.
Then for any irreducible representation $\pi$ of $\U(p,q)$, its theta lift to $\U(r,s)$ is defined as an irreducible representation $\theta_{r,s}(\pi)$ of $\U(r,s)$ such that 
\[
 \Hom_{\U(p,q) \times \U(r,s)}(\omega, \pi \boxtimes \theta_{r,s}(\pi)) \ne 0,
\]
which is uniquely determined (if exists) by the Howe duality \cite{howe2}.
If such a representation does not exist, we interpret $\theta_{r,s}(\pi)$ as zero.

When $p+q \le r+s$ and $\pi$ is a discrete series representation with sufficiently regular infinitesimal character, Li \cite{li90} showed that $\theta_{r,s}(\pi)$ is nonzero and expressed it as a cohomologically induced representation in the good range.
When $p+q=r+s$ or $p+q=r+s \pm 1$, Paul \cite{paul1, paul2} generalized his result and completely determined $\theta_{r,s}(\pi)$ for arbitrary $\pi$.
The purpose of this paper is to describe $\theta_{r,s}(\pi)$ explicitly when $\pi$ is tempered but for arbitrary $p,q,r,s$.
For this, we first prove the following generalization of \cite{li90} to the case of (limits of) discrete series representations (see Theorem \ref{t:main-lds} for more details).

\begin{thm}
\label{t:intro}
Let $\pi$ be a (limit of) discrete series representation of $\U(p,q)$.
Assume that its theta lift $\theta_{r,s}(\pi)$ to $\U(r,s)$ is nonzero.
Then we have
\[
 \theta_{r,s}(\pi) = A_\q(\lambda),
\]
where the right-hand side is a cohomologically induced representation in the weakly fair range, and $\q$ and $\lambda$ can be described explicitly.
Moreover, if $p+q \ge r+s-1$, then $\theta_{r,s}(\pi)$ is a (limit of) discrete series representation.
\end{thm}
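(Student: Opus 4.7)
The plan is to identify $\theta_{r,s}(\pi)$ with an explicit cohomologically induced module $A_\q(\lambda)$ by a two-step process: first produce the candidate pair $(\q,\lambda)$ from the Harish-Chandra parameter of $\pi$, then prove the equality by a translation-functor reduction to Li's theorem in the regular good-range case. To construct the candidate, I would start from the explicit Harish-Chandra parameter of $\pi$ and, following the combinatorial recipe implicit in Li's formula in the regular case, sort the parameter according to the maximal compact subgroup $\U(p) \times \U(q)$, read off a $\theta$-stable parabolic $\q$ of the complexified Lie algebra of $\U(r,s)$ with associated Levi $L$, and define $\lambda$ as a one-dimensional character of $L$ with the appropriate $\rho$-shift. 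A direct parameter count then shows $(\q,\lambda)$ sits in the weakly fair range (but, in general, not the good range), and a conservation-relation-style comparison of infinitesimal characters on the two sides confirms the match.

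To establish the equality $\theta_{r,s}(\pi) = A_\q(\lambda)$, my preferred route is to deform $\pi$ by tensoring with a suitable finite-dimensional representation, so as to push its infinitesimal character into the sufficiently regular range where Li's theorem applies, and then project onto the appropriate infinitesimal-character component to return to the original parameter. For this scheme to close, one needs (a) compatibility of projection onto an infinitesimal-character component with the theta correspondence, which should follow from the joint spectral decomposition of $\omega$ under $\U(p,q) \times \U(r,s)$ and the exactness of the projector, and (b) the fact that the same deformation-and-projection procedure sends $A_\q(\lambda)$ in Li's good range to the candidate $A_\q(\lambda)$ in the weakly fair range, which is a standard compatibility of translation with cohomological induction. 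An independent check is available via see-saw duality, using pairs of the form $(\U(p,q),\U(r,s)) \leftrightarrow (\U(p',q') \times \U(p'',q''),\U(r,s))$ to reduce inductively to Paul's complete description in the boundary cases $p+q \in \{r+s-1, r+s, r+s+1\}$.

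For the last statement, the hypothesis $p+q \geq r+s-1$ forces the Levi of the candidate $\q$, via the combinatorial recipe above, to be nearly anisotropic: its non-compact factors are small enough that $A_\q(\lambda)$ reduces, by the Vogan--Zuckerman characterization, to a (limit of) discrete series of $\U(r,s)$. The main obstacle throughout is the weakly fair range. In Li's good-range setting, $A_\q(\lambda)$ is automatically nonzero, irreducible, unitary, and supported in middle cohomology; in the weakly fair range, nonvanishing and irreducibility can fail, and the hardest step will be a tight matching theorem showing that $A_\q(\lambda) \neq 0$ exactly when the Howe-duality nonvanishing criterion for $\theta_{r,s}(\pi)$ holds. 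Limits of discrete series compound the difficulty: the Harish-Chandra parameter sits on a wall, so translation must be executed while controlling the wall structure, and the $K$-type matching arguments must be performed at singular parameters without the cushion of regularity.
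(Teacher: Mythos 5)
Your proposed route is fundamentally different from the paper's, and it has a genuine gap at its core. The paper's proof of Theorem \ref{t:intro} (carried out via Theorem \ref{t:main-lds}) is \emph{global}: it globalizes $\pi$ to an automorphic representation $\varPi$ over a real quartic field with $\varPi_{v_0}=\pi$ and with $\varPi_{v_1}$ a discrete series with sufficiently regular infinitesimal character at a second real place, proves the nonvanishing of the global theta lift via a new local result on integrals of matrix coefficients (Proposition \ref{p:key}, established by a seesaw argument using Beuzart--Plessis and Xue's proof of the GGP conjecture), and then uses Arthur's multiplicity formula together with M{\oe}glin--Renard's description of real $A$-packets to transfer the answer from $v_1$ (Li's case) to $v_0$. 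Your proposal instead stays local and tries to reduce to Li's case by translation functors.

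The step that does not work in your scheme is the combination of (a) and (b). Translation from regular to singular infinitesimal character is not injective: the $\psi$-projection functor (projection onto a given infinitesimal-character block after tensoring with a finite-dimensional $F$) is exact, but it can annihilate irreducibles and does not determine its image uniquely --- several distinct regular irreducibles can map to the same singular one, or to zero. So even if you could produce a deformation $\pi_+$ of $\pi$ in the good range with $\theta_{r,s}(\pi_+)=A_\q(\lambda_+)$ by Li, ``projecting back'' tells you at most that $\theta_{r,s}(\pi)$ is a constituent of $\psi(A_\q(\lambda_+))$, not that it equals $A_\q(\lambda)$; and it certainly does not tell you that $A_\q(\lambda)\ne 0$. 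The paper explicitly notes that the nonvanishing of $\nA_\q(\lambda')$ in the weakly fair range when $m\ge n+2$ was previously unknown and that it is unclear whether it follows from Trapa's results --- so this is a genuine obstruction, not a technicality, and your plan does not contain a mechanism to overcome it. Moreover, item (a) is itself problematic: tensoring $\pi$ with a finite-dimensional representation of $\U(p,q)$ does not correspond to any natural operation on the $\U(r,s)$-side through the Weil representation, so ``compatibility of infinitesimal-character projection with the theta correspondence'' is not a standard fact and would need its own proof. Your fallback of using seesaws to reduce to Paul's boundary cases $p+q\in\{r+s-1,r+s,r+s+1\}$ only suffices for the last clause of the theorem (which Paul essentially already covers), not for the main equality when $p+q<r+s-1$, which is the interesting range.

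Your identification of the candidate pair $(\q,\lambda)$ by sorting the Harish-Chandra parameter and your observation that $p+q\ge r+s-1$ forces the Levi to be compact enough that $A_\q(\lambda)$ is a (limit of) discrete series are both correct and consistent with Theorem \ref{t:main-lds}. But the core of the proof --- establishing the equality in the singular weakly fair range --- requires the global input that your proposal lacks.
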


We have stated the result under the assumption that $\theta_{r,s}(\pi)$ is nonzero, but there is a combinatorial criterion for the nonvanishing of $\theta_{r,s}(\pi)$ due to Atobe \cite{atobe} (see also \S \ref{s:atobe}).
Based on this theorem, we can describe $\theta_{r,s}(\pi)$ explicitly for any tempered representation $\pi$ (see Theorem \ref{t:main-temp} for more details).

We now give some details of the proof of Theorem \ref{t:intro}.
Our proof is global and relies on Arthur's endoscopic classification \cite{arthur,mok,kmsw}.
Thus our main result is conditional on Arthur's multiplicity formula for the automorphic discrete spectra of unitary groups announced by Kaletha--M\'inguez--Shin--White \cite{kmsw} (see \eqref{eq:amf} for details), whose proof will be completed in their subsequent work.
We first globalize the given local theta lift for real unitary groups.
Namely, we find a global theta lift such that
\begin{itemize}
\item at one real place, its localization is the theta lift of an arbitrary (limit of) discrete series representation;
\item at another real place, its localization is the theta lift of a discrete series representation with sufficiently regular infinitesimal character, which is determined explicitly by Li \cite{li90};
\item at the other places, its localizations are easy to describe explicitly.
\end{itemize}
Then we use Arthur's multiplicity formula (viewed as a product formula) to transfer the information from the case of sufficiently regular infinitesimal character to the general case.
However, there is a difficulty in this argument: it is not straightforward to globalize a local theta lift for real unitary groups.

More precisely, let $\pi$ be a (limit of) discrete series representation of $\U(p,q)$ and consider its theta lift $\theta_{r,s}(\pi)$ to $\U(r,s)$.
Switching the roles of $\U(p,q)$ and $\U(r,s)$ if necessary, we may assume that $p+q < r+s$.
Let $F \ne \Q$ be a totally real number field with ad\`ele ring of $\A$ and fix a real place $v_0$ of $F$.
Then it is easy to find
\begin{itemize}
\item anisotropic unitary groups $G$ and $H$ over $F$ such that $G_{v_0} = \U(p,q)$ and $H_{v_0} = \U(r,s)$, respectively;
\item an irreducible automorphic representation of $G(\A)$ such that $\varPi_{v_0} = \pi$.
\end{itemize}
But we need $G,H,\varPi$ such that the global theta lift $\theta(\varPi)$ to $H(\A)$ is nonzero.
For this, we proceed as follows.
\begin{enumerate}
\item 
\label{item:intro1}
Find $G,H,\varPi$ such that the local theta lift $\theta(\varPi_v)$ to $H_v$ is nonzero for all places $v$ of $F$.
\item 
\label{item:intro2}
Show that $\theta(\varPi)$ is nonzero if and only if $\theta(\varPi_v)$ is nonzero for all $v$.
\end{enumerate}
To show that $G,H,\varPi$ as in \eqref{item:intro1} exist, we appeal to Arthur's multiplicity formula.
In fact, we may impose further local conditions on $G,H,\varPi$ to make the global-to-local argument work.
On the other hand, \eqref{item:intro2} is largely but not completely known for unitary groups (see \cite[Theorem 1.3]{gqt}).
Indeed, the standard argument relies on the Rallis inner product formula, which involves the local integral at $v_0$ given by
\[
 \int_{\U(p,q)} (\omega(g) \varphi_1, \varphi_2) \overline{(\pi(g)v_1, v_2)} \, dg
\]
for $\varphi_1, \varphi_2 \in \omega$ and $v_1, v_2 \in \pi$.
Here $(\cdot, \cdot)$ denotes an invariant Hermitian inner product.
This integral is absolutely convergent and defines an invariant functional
\[
 \ZZ_{r,s}(\pi) : \omega \otimes \bar{\omega} \otimes \bar{\pi} \otimes \pi \rightarrow \C.
\]
Then we have $\theta_{r,s}(\pi) \ne 0$ if $\ZZ_{r,s}(\pi) \ne 0$, and we are reduced to proving the converse.
However, it was previously only known that if $\theta_{r,s}(\pi) \ne 0$, then $\ZZ_{r', s'}(\pi) \ne 0$ for some $r', s'$ such that $r'+s'=r+s$ and $r' \equiv r \bmod 2$ (see \cite[Proposition 11.5]{gqt}).
Thus we need to prove the following (see Proposition \ref{p:key}), which is the key innovation in this paper.

\begin{prop}
\label{p:intro}
We have
\[
 \ZZ_{r,s}(\pi) \ne 0 \, \Leftrightarrow \, \theta_{r,s}(\pi) \ne 0.
\]
\end{prop}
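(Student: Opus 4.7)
The implication $\ZZ_{r,s}(\pi) \ne 0 \Rightarrow \theta_{r,s}(\pi) \ne 0$ is the standard direction: $\ZZ_{r,s}(\pi)$ descends to a nonzero $\U(r,s)$-invariant sesquilinear form on the maximal $\pi$-isotypic quotient $\Theta_{r,s}(\pi)$ of $\omega$, forcing $\Theta_{r,s}(\pi) \ne 0$, whence Howe duality gives $\theta_{r,s}(\pi) \ne 0$. The novelty is the converse, and my plan is to strengthen the doubling-seesaw argument underlying \cite[Prop.~11.5]{gqt}.

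The starting point is the doubling seesaw with $\U(p,q) \times \U(p,q) \subset \U(p+q, p+q)$ paired against $\U(r,s) \subset \U(r,s) \times \U(s,r)$. Following Gan--Qiu--Takeda, the Rallis inner product formula rewrites $\ZZ_{r,s}(\pi)$ as the value at a central point $s_0$ of a local doubling zeta integral $Z(s, v_1, v_2, f_\Phi)$, where $f_\Phi$ is a standard section of the degenerate principal series on $\U(p+q, p+q)$ built from a Schwartz function $\Phi$ on the Schr\"odinger model of $\omega$ adapted to the signature $(r, s)$. Their argument already yields nonvanishing of this integral for \emph{some} $\Phi$ adapted to \emph{some} signature $(r', s')$ in the Witt tower, but the refinement we need is for the specific signature $(r,s)$.

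To promote this to the exact signature, I plan to combine two ingredients. First, for tempered $\pi$ the normalized zeta integral $Z^*(s) = Z(s)/L(s+\tfrac12, \pi, \mathrm{std})$ should extend to an entire function of $s$ whose value at $s_0$ defines a nondegenerate $(\U(p,q) \times \U(p,q))$-invariant functional on its natural domain, while $L(s_0 + \tfrac12, \pi, \mathrm{std})$ is nonvanishing by temperedness. Hence it suffices to find $\Phi$ adapted to $(r, s)$ for which this functional does not annihilate $v_1 \otimes v_2$. Second, the hypothesis $\theta_{r,s}(\pi) \ne 0$ combined with Atobe's criterion \cite{atobe} provides an explicit combinatorial condition on the $L$-parameter of $\pi$ tied to the specific signature $(r,s)$. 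I plan to translate this condition, via the seesaw, into a property of the restriction of $\omega$ to $\U(p,q)$ (for instance at the level of $K$-types or the associated variety), and use it to exhibit an explicit such $\Phi$.

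The hard part, I expect, will be this last step: bridging Atobe's representation-theoretic, parameter-level criterion with the analytic nonvanishing of the doubling zeta integral at the particular signature. My plan is to compare the two Witt-tower signatures $(r,s)$ and $(r',s') \ne (r,s)$ case by case, using the conservation relations for theta lifts to rule out the signature for which the theta lift vanishes, and then to carry out a local computation in the Schr\"odinger (or mixed) model of $\omega$ to exhibit the required Schwartz function. A small subtlety to watch is the interaction between the normalization of $Z^*$ and the choice of splitting character entering the definition of the lift $\U(p,q) \times \U(r,s) \to \Mp_{2l}(\R)$, which must be consistent between the local and global zeta integrals used in the global part of the proof.
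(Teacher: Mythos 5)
Your plan takes a genuinely different route from the paper, but the critical step is not established and I do not see how it would go through as sketched.

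The easy direction ($\ZZ_{r,s}(\pi)\ne 0\Rightarrow\theta_{r,s}(\pi)\ne 0$) is indeed standard and you identify it correctly. For the converse, however, the paper does not try to analyze the local doubling zeta integral directly or to produce an explicit Schwartz function adapted to the signature $(r,s)$. Instead it runs an induction on $\dim V-\dim W$: given $\theta_{r,s}(\pi)\ne 0$, Atobe's nonvanishing criterion (via Lemmas~\ref{l:induction-disc} and~\ref{l:induction-temp}) produces a discrete series $\pi'$ on $\U(W')$, with $W'\supset W$ and $\dim W'=\dim W+1$, such that $\theta_{V,W'}(\pi')\ne 0$ and $\Hom_{\U(W)}(\pi',\pi)\ne 0$. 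Beuzart-Plessis's archimedean GGP theorem converts the latter into nonvanishing of the integral $\int_{\U(W)}(\pi'(g)v_1',v_2')\overline{(\pi(g)v_1,v_2)}\,dg$. Then, using that $\ZZ(\pi')\ne 0$ by induction (so matrix coefficients of $\omega'$ project densely onto the $\pi'\boxtimes\bar\pi'$-isotypic part of $\CC(\U(W'))$, via Lemma~\ref{l:proj-pi}), the seesaw Lemma~\ref{l:seesaw} yields $\ZZ_{r,s}(\pi)\ne 0$. The only place an explicit $K$-type / Flensted-Jensen-style computation is made is the base case (Lemma~\ref{l:base}), and there one is in the regime already covered by Li's work.

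The gap in your proposal is exactly the step you flag as hard: you assert that Atobe's combinatorial criterion can be translated into a property of the restriction of $\omega$ to $\U(p,q)$ that lets you exhibit a Schwartz function $\Phi$ adapted to the specific $(r,s)$ making the normalized zeta integral nonzero, but you give no mechanism for doing this. The GQT argument you want to strengthen only shows nonvanishing of $\ZZ_{r',s'}(\pi)$ for \emph{some} $(r',s')$ of the right dimension and parity, and the sections arising from Schwartz functions for different signatures live in the same degenerate principal series — picking out the correct one by hand is precisely the obstruction. Your fallback of using conservation relations to ``rule out the signature for which the theta lift vanishes'' does not resolve this: conservation relations control first-occurrence indices across the two Witt towers, and when $\dim V>\dim W+1$ it is perfectly possible for $\theta_{r,s}(\pi)$ and $\theta_{r',s'}(\pi)$ to both be nonzero with $r'\ne r$, $r'+s'=r+s$, $r'\equiv r\bmod 2$; so there is nothing to rule out, and you would still need to show $\ZZ_{r,s}(\pi)\ne 0$ for the particular $(r,s)$ rather than the one GQT hands you. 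The paper's seesaw-plus-GGP induction bypasses this entirely by never having to make a direct choice of test vector in the degenerate principal series beyond the good range, which is the main structural advantage of its approach over yours.
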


To prove this proposition, we modify an inductive argument of Atobe \cite{atobe} for the nonvanishing of $\theta_{r,s}(\pi)$, which relies on the Gan--Gross--Prasad conjecture \cite{ggp1} proved by Xue \cite{xue}.
Indeed, if $\theta_{r,s}(\pi) \ne 0$, then we can deduce that there exists a discrete series representation $\pi'$ of $\U(p+1,q)$ such that $\theta_{r,s}(\pi') \ne 0$ and $\Hom_{\U(p,q)}(\pi',\pi) \ne 0$.
In particular, by a result of Beuzart-Plessis \cite{bp}, we have 
\begin{equation}
\label{eq:bp}
 \int_{\U(p,q)} (\pi'(g) v'_1, v'_2) \overline{(\pi(g) v_1, v_2)} \,dg \ne 0
\end{equation}
for some $v'_1, v'_2 \in \pi'$ and $v_1, v_2 \in \pi$.
On the other hand, if we write $\omega'$ for the Weil representation of $\U(p+1,q) \times \U(r,s)$, then matrix coefficients of $\omega'$ (regarded as functions on $\U(p+1,q)$) belong to the Harish-Chandra Schwartz space $\CC(\U(p+1,q))$.
Since we may assume that $\ZZ_{r,s}(\pi') \ne 0$ by induction on $(r+s) - (p+q)$, the projection of these matrix coefficients to the $\pi' \boxtimes \bar{\pi}'$-isotypic component of $\CC(\U(p+1,q))$ is nonzero and hence dense.
This combined with \eqref{eq:bp} implies that 
\[
 \int_{\U(p,q)} (\omega'(g) \varphi'_1, \varphi'_2) \overline{(\pi(g) v_1, v_2)} \,dg \ne 0
\]
for some $\varphi'_1, \varphi'_2 \in \omega'$, from which Proposition \ref{p:intro} follows easily.
We stress that the proof is local and does not rely on Arthur's endoscopic classification, so that Proposition \ref{p:intro} is unconditional.

\subsection*{Acknowledgements}

The author is partially supported by JSPS KAKENHI Grant Number 19H01781.
He would like to thank Hiraku Atobe, Rapha\"el Beuzart-Plessis, Wee Teck Gan, and Hang Xue for useful discussions.
He would also like to thank the referee for helpful comments.

\subsection*{Notation}

For any representation $\pi$, we denote by $\pi^\vee$ the contragredient of $\pi$ and by $\bar{\pi}$ the complex conjugate of $\pi$.
For any real reductive group $G$, we work with the category of $(\g,K)$-modules unless otherwise specified, where $\g$ is the complexified Lie algebra of $G$ and $K$ is a maximal compact subgroup of $G$.
Thus by abuse of terminology, we usually mean a $(\g,K)$-module by a representation of $G$.

\section{Local theta lifting}

In this section, we review the notion of local theta lifting.
We follow the convention in \cite{gi1, gi2}, which is different from that in \cite{kudla,hks}.

\subsection{Hermitian and skew-Hermitian spaces}
\label{ss:herm-spaces}

Let $F$ be a local field of characteristic zero.
Let $E$ be an \'etale quadratic algebra over $F$, so that $E$ is either $F \times F$ or a quadratic extension of $F$.
We denote by $c$ the nontrivial automorphism of $E$ over $F$.
Let $\Tr_{E/F}$ and $\operatorname{N}_{E/F}$ be the trace and norm maps from $E$ to $F$, respectively.
Let $\omega_{E/F}$ be the (possibly trivial) quadratic character of $F^\times$ associated to $E/F$ by local class field theory, so that $\operatorname{Ker}(\omega_{E/F}) = \operatorname{N}_{E/F}(E^\times)$.
Fix an element $\delta \in E^\times$ such that $\Tr_{E/F}(\delta) = 0$.

Fix $\varepsilon = \pm 1$.
Let $V$ be an $n$-dimensional $\varepsilon$-Hermitian space over $E$.
Namely, $V$ is a free $E$-module of rank $n$ equipped with a nondegenerate sesquilinear form $\langle \cdot, \cdot \rangle_V : V \times V \rightarrow E$ satisfying
\[
 \langle a v, b w \rangle_V = a b^c \langle v, w \rangle_V, \quad
 \langle w, v \rangle_V = \varepsilon \langle v, w \rangle_V^c
\]
for $a, b \in E$ and $v, w \in V$.
Let $\det(V) \in E^\times / \operatorname{N}_{E/F}(E^\times)$ be the determinant of the matrix
\[
 (\langle v_i, v_j \rangle_V)_{1 \le i,j \le n},
\]
where $v_1, \dots, v_n$ is a basis of $V$.
Define $\epsilon(V) = \pm 1$ by
\[
 \epsilon(V) =
 \begin{cases}
  \omega_{E/F}((-1)^{\frac{1}{2}n(n-1)} \cdot \det(V)) & \text{if $\varepsilon = +1$;} \\
  \omega_{E/F}((-1)^{\frac{1}{2}n(n-1)} \cdot \det(V) \cdot \delta^{-n}) & \text{if $\varepsilon = -1$.}
 \end{cases}
\]
Note that $\epsilon(V)$ depends on $\delta$ if $\varepsilon = -1$, $E \ne F \times F$, and $n$ is odd.
We denote by $\U(V)$ the unitary group of $V$, i.e.
\[
 \U(V) = \{ g \in \GL(V) \, | \, \text{$\langle gv, gw \rangle_V = \langle v, w \rangle_V$ for all $v, w \in V$} \}.
\]

Recall that given a positive integer $n$, the $n$-dimensional $\varepsilon$-Hermitian spaces over $E$ (up to isometry) are classified as follows.
\begin{itemize}
\item 
If $E = F \times F$, then there is a unique such space.
We denote it by $V_n^+$.
Then we have $\epsilon(V_n^+) = +1$ and $V_n^+ = \VV_n \otimes_F E$ for some $n$-dimensional vector space $\VV_n$ over $F$.
Moreover, the first projection $V_n^+ = \VV_n \times \VV_n \rightarrow \VV_n$ induces an isomorphism $\U(V_n^+) \cong \GL(\VV_n)$.
\item 
If $F$ is nonarchimedean and $E \ne F \times F$, then there are precisely two such spaces, which are distinguished by their signs.
We denote them by $V_n^+$ and $V_n^-$ so that $\epsilon(V_n^+) = +1$ and $\epsilon(V_n^-) = -1$.
\item 
If $F = \R$ and $E = \C$, then there are precisely $n+1$ such spaces, which are distinguished by their signatures.
We denote by $V_{p,q}$ the space of signature $(p,q)$, where $p,q$ are nonnegative integers such that $p+q=n$.
More precisely, we require that $V_{p,q}$ has a basis $v_1, \dots, v_n$ such that
\[
 \langle v_i, v_j \rangle_{V_{p,q}} = \zeta \times 
 \begin{cases}
  1 & \text{if $i = j \le p$;} \\
  -1 & \text{if $i = j > p$;} \\
  0 & \text{if $i \ne j$,}
 \end{cases}
\]
where 
\[
 \zeta = 
 \begin{cases}
  1 & \text{if $\varepsilon = +1$;} \\
  \sqrt{-1} & \text{if $\varepsilon = -1$.}
 \end{cases}
\]
Then we have
\[
 \epsilon(V_{p,q}) = (-1)^{\frac{1}{2}(p-q)(p-q-1)}
\]
if we take $\delta = \sqrt{-1}$.
For uniformity, we write
\[
 V_n^+ = 
 \begin{cases}
  V_{\frac{n}{2}, \frac{n}{2}} & \text{if $n$ is even;} \\
  V_{\frac{n+1}{2}, \frac{n-1}{2}} & \text{if $n$ is odd.}
 \end{cases}
\]
\end{itemize}
Note that $\U(V_n^+)$ is quasi-split over $F$.

\subsection{Theta lifts}

Let $V$ be an $m$-dimensional Hermitian space over $E$ and $W$ an $n$-dimensional skew-Hermitian space over $E$.
We regard $\W = V \otimes_E W$ as a vector space over $F$ and equip it with the symplectic form given by
\[
 \langle \hspace{-1mm} \langle v_1 \otimes w_1, v_2 \otimes w_2 \rangle \hspace{-1mm} \rangle = \Tr_{E/F}(\langle v_1, v_2 \rangle_V \langle w_1, w_2 \rangle_W).
\]
Let $\Sp(\W)$ be the symplectic group of $\W$ and $\Mp(\W)$ the metaplectic $\C^1$-cover of $\Sp(\W)$.
Then it follows from \cite{kudla,hks} that the natural homomorphism $\U(V) \times \U(W) \rightarrow \Sp(\W)$ has a lift
\[
 \iota_{V,W,\chi_V,\chi_W,\psi} : \U(V) \times \U(W) \rightarrow \Mp(\W)
\]
which depends on the choice of the following datum:
\begin{itemize}
\item two unitary characters $\chi_V, \chi_W$ of $E^\times$ such that
\[
 \chi_V|_{F^\times} = \omega_{E/F}^m, \quad
 \chi_W|_{F^\times} = \omega_{E/F}^n;
\]
\item a nontrivial additive character $\psi$ of $F$.
\end{itemize}
Composing this with the Weil representation $\omega_\psi$ of $\Mp(\W)$ relative to $\psi$, we obtain a representation
\[
 \omega_{V, W, \chi_V, \chi_W, \psi} = \omega_\psi \circ \iota_{V, W, \chi_V, \chi_W, \psi}
\]
of $\U(V) \times \U(W)$.
Note that if we apply the construction to the spaces $W$ and $V$ equipped with the Hermitian form $\delta^{-1} \langle \cdot, \cdot \rangle_W$ and the skew-Hermitian form $\delta \langle \cdot, \cdot \rangle_V$, respectively, then we obtain the representation
\[
 \omega_{V, W, \chi_V, \chi_W, \psi} \circ \mathrm{sw},
\]
where $\mathrm{sw}: \U(W) \times \U(V) \rightarrow \U(V) \times \U(W)$ switches factors.
In particular, we can freely switch the roles of $V$ and $W$.

For any irreducible representation $\pi$ of $\U(W)$, we denote by $\theta_{V, W, \chi_V, \chi_W, \psi}(\pi)$ its theta lift to $\U(V)$, i.e.~an irreducible representation of $\U(V)$ such that
\[
 \Hom_{\U(V) \times \U(W)}(\omega_{V,W,\chi_V,\chi_W,\psi}, \theta_{V, W, \chi_V, \chi_W, \psi}(\pi) \boxtimes \pi) \ne 0,
\]
which is uniquely determined (if exists) by the Howe duality \cite{howe2,wal,minguez,gt}.
If such a representation does not exist, we interpret $\theta_{V, W, \chi_V, \chi_W, \psi}(\pi)$ as zero.

\section{Representations of real unitary groups}

In this section, we introduce some representations of real unitary groups which will be needed in this paper.

\subsection{Real unitary groups}
\label{ss:real-notation}

Fix $\varepsilon = \pm 1$.
Let $V$ be an $n$-dimensional $\varepsilon$-Hermitian space over $\C$ of signature $(p,q)$, so that $p + q = n$.
Let $G = \U(V)$ be the unitary group of $V$, which we identify with 
\[
 \U(p,q) = \left\{ g \in \GL_n(\C) \, \left| \, {}^t \bar{g}
 \begin{pmatrix}
  \1_p & \\
  & -\1_q
 \end{pmatrix}
 g = 
 \begin{pmatrix}
  \1_p & \\
  & -\1_q 
 \end{pmatrix}
 \right. \right\}
\]
via the basis as in \S \ref{ss:herm-spaces}
Define a Cartan involution $\theta$ of $G$ by
\[
 \theta(g) = {}^t \bar{g}^{-1}
\]
and let $K = \{ g \in G \, | \, \theta(g) = g \}$ be the associated maximal compact subgroup of $G$.
Let $\g_0$ be the Lie algebra of $G$ and $\t_0$ the Cartan subalgebra of $\g_0$ consisting of diagonal matrices.
Let $\g = \g_0 \otimes_\R \C$ and $\t = \t_0 \otimes_\R \C$ be their complexifications.
We identify $\t$ with $\C^n$ via the isomorphism
\[
 (x_1, \dots, x_n) \mapsto \diag(x_1, \dots, x_n)
\]
and $\t^*$ with $\C^n$ via the basis $\varepsilon_1,\dots,\varepsilon_n$ given by
\[
 \varepsilon_i(\diag(x_1, \dots, x_n)) = x_i.
\]
Define a bilinear form $\langle \cdot, \cdot \rangle : \t^* \times \t^* \rightarrow \C$ by
\[
 \langle \alpha, \beta \rangle = \alpha_1 \beta_1 + \dots + \alpha_n \beta_n
\]
for $\alpha = (\alpha_1,\dots,\alpha_n), \beta = (\beta_1,\dots,\beta_n) \in \t^* \cong \C^n$.
Let $\Delta$ be the set of roots of $\t$ in $\g$, so that 
\[
 \Delta = \{ \pm(\varepsilon_i - \varepsilon_j) \, | \, 1 \le i < j \le n \}.
\]
Let $\Delta_c$ be the set of compact roots in $\Delta$ and take the positive system $\Delta_c^+$ of $\Delta_c$ given by 
\[
 \Delta_c^+ = \{ \varepsilon_i - \varepsilon_j  \, | \, 1 \le i < j \le p \} \cup \{ \varepsilon_i - \varepsilon_j \, | \, p < i < j \le n \}.
\]
For any subspace $\mathfrak{f}$ of $\g$ stable under the adjoint action of $\t$, we denote by $\Delta(\mathfrak{f})$ the set of roots of $\t$ in $\mathfrak{f}$ and put $\rho(\mathfrak{f}) = \frac{1}{2} \sum_{\alpha \in \Delta(\mathfrak{f})} \alpha$.

\subsection{Parabolically induced representations}
\label{ss:parab-ind}

Let $d$ be a nonnegative integer with $d \le \min \{ p, q \}$.
When $d > 0$, we take elements $v'_1, \dots, v'_d, v''_1, \dots, v''_d \in V$ such that
\[
 \langle v'_i, v'_j \rangle_V = \langle v''_i, v''_j \rangle_V = 0, \quad 
 \langle v'_i, v''_j \rangle_V = \delta_{i,j}
\]
and put
\[
 X_i = \C v'_i, \quad
 X^*_i = \C v''_i.
\]
Let $V_0$ be the orthogonal complement of $X_1 \oplus \dots \oplus X_d \oplus X_1^* \oplus \dots \oplus X_d^*$ in $V$, so that $V_0$ is an $\varepsilon$-Hermitian space over $\C$ of signature $(p-d,q-d)$.
Let $P = MU$ be the parabolic subgroup of $G$ stabilizing the flag 
\[
 X_1 \subset X_1 \oplus X_2 \subset \dots \subset X_1 \oplus \dots \oplus X_d,
\]
where $M$ is the Levi component of $P$ stabilizing the flag
\[
 X^*_1 \subset X^*_1 \oplus X^*_2 \subset \dots \subset X^*_1 \oplus \dots \oplus X^*_d
\]
and $U$ is the unipotent radical of $P$.
As in the previous subsection, we identify $M \cong \GL(X_1) \times \dots \times \GL(X_d) \times \U(V_0)$ with $(\C^\times)^d \times \U(p-d,q-d)$.
For any characters $\chi_1, \dots, \chi_d$ of $\C^\times$ and any representation $\pi_0$ of $\U(p-d,q-d)$, we write
\[
 I(\chi_1, \dots, \chi_d, \pi_0) = \Ind^G_P(\chi_1 \boxtimes \dots \boxtimes \chi_d \boxtimes \pi_0)
\]
for the associated normalized parabolically induced representation.
When $d=0$, we interpret $I(\chi_1, \dots, \chi_d, \pi_0)$ as $\pi_0$.

\subsection{(Limits of) discrete series representations}
\label{ss:lds}

Recall that the discrete series representations of $G$ are parametrized by Harish-Chandra parameters (which are dominant for $\Delta_c^+$)
\[
 \lambda = (\lambda_1, \dots, \lambda_n) \in \sqrt{-1} \t_0^*,
\]
where
\begin{itemize}
\item $\lambda_i \in \Z + \frac{n-1}{2}$;
\item $\lambda_i \ne \lambda_j$ if $i \ne j$;
\item $\lambda_1 > \dots > \lambda_p$ and $\lambda_{p+1} > \dots > \lambda_n$.
\end{itemize}
More generally, the (limits of) discrete series representations of $G$ are parametrized by pairs $(\lambda,\Psi)$ consisting of $\lambda \in \sqrt{-1} \t_0^*$ of the form
\begin{equation}
\label{eq:lambda}
 \lambda = (\underbrace{\lambda_1, \dots, \lambda_1}_{p_1}, \dots, \underbrace{\lambda_k, \dots, \lambda_k}_{p_k}, \underbrace{\lambda_1, \dots, \lambda_1}_{q_1}, \dots, \underbrace{\lambda_k, \dots, \lambda_k}_{q_k}),
\end{equation}
where
\begin{itemize}
\item $\lambda_i \in \Z + \frac{n-1}{2}$;
\item $\lambda_1 > \dots > \lambda_k$;
\item $p_i, q_j \ge 0$;
\item $(p_i,q_i) \ne (0,0)$ and $|p_i - q_i| \le 1$ for all $i$;
\item $p_1 + \dots + p_k = p$ and $q_1 + \dots + q_k = q$,
\end{itemize}
and a positive system $\Psi$ of $\Delta$ such that
\begin{itemize}
\item $\Delta_c^+ \subset \Psi$;
\item $\langle \lambda, \alpha \rangle \ge 0$ for all $\alpha \in \Psi$;
\item if $\alpha$ is a simple root in $\Psi$ such that $\langle \lambda, \alpha \rangle = 0$, then $\alpha$ is noncompact.
\end{itemize}
Note that if $(\lambda, \Psi)$ corresponds to a discrete series representation, then $\Psi$ is uniquely determined by $\lambda$.

\subsection{Tempered representations}

We say that a character $\chi$ of $\C^\times$ is conjugate-selfdual with sign $+1$ (resp.~$-1$) if $\chi|_{\R^\times} = \mathbbm{1}$ (resp.~$\chi|_{\R^\times} = \omega_{\C/\R}$).

Recall that any irreducible tempered representation of $G$ can be realized as a subrepresentation of $I(\chi_1, \dots, \chi_d, \pi_0)$, where
\begin{itemize}
\item $d$ is a nonnegative integer with $d \le \min \{ p, q \}$;
\item $\chi_1, \dots, \chi_d$ are unitary characters of $\C^\times$;
\item $\pi_0$ is a discrete series representation of $\U(p-d,q-d)$.
\end{itemize}
More precisely, we have the following results of Knapp--Zuckerman \cite{kz1, kz2}.

\begin{lem}
Assume that $p,q > 0$.
Let $\chi$ be a conjugate-selfdual character of $\C^\times$ with sign $(-1)^{n-1}$, so that 
\[
 \chi(z) = \bigg( \frac{z}{\sqrt{z \bar{z}}} \bigg)^{2 \kappa} 
\]
for some $\kappa \in \Z + \frac{n-1}{2}$.
Let $\pi_0$ be a (limit of) discrete series representation of $\U(p-1,q-1)$ associated to a pair $(\lambda_0, \Psi_0)$ as in \S \ref{ss:lds}.
\begin{itemize}
\item 
If $\kappa = \lambda_{0,i}$ for some $i$, then $I(\chi,\pi_0)$ is irreducible and is a limit of discrete series representation of $G$.
\item 
If $\kappa \ne \lambda_{0,i}$ for all $i$, then we have $I(\chi,\pi_0) = \pi \oplus \pi'$, where $\pi$ and $\pi'$ are distinct limits of discrete series representations of $G$.
\end{itemize}
(See \S \ref{sss:packets-real-lds} below for more explicit description.)
\end{lem}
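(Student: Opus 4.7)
The plan is to reduce this to the classical Knapp--Stein theory of R-groups for tempered parabolic induction, combined with the Knapp--Zuckerman description of limits of discrete series. Since $\chi$ is unitary and $\pi_0$ is tempered on $M \cong \GL_1(\C) \times \U(p-1,q-1)$, the representation $I(\chi, \pi_0)$ is itself unitary and tempered, and by Harish-Chandra's theory it decomposes as a multiplicity-free finite direct sum of irreducible tempered representations whose cardinality equals the order of the Knapp--Stein R-group $R(\chi \boxtimes \pi_0)$.

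First I would analyze the little Weyl group $W(M) = N_G(M)/M$. A direct inspection of the roots of $\U(p,q)$ shows $W(M) = \{1, w\}$, where $w$ is induced by the Weyl element of $\U(1,1) \subset G$ that swaps $X_1$ and $X_1^*$. Its induced action on characters of $\GL_1(\C)$ is $\chi \mapsto \chi^{-c}$ with $\chi^{-c}(z) = \chi(\bar z)^{-1}$; conjugate-selfduality of $\chi$ gives $\chi^{-c} = \chi$, so $w$ stabilizes $\chi \boxtimes \pi_0$ up to equivalence and $W(\chi \boxtimes \pi_0) = W(M)$. The R-group is then the quotient of $W(\chi \boxtimes \pi_0)$ by the subgroup generated by reflections in roots whose Plancherel $\mu$-factor vanishes on $\chi \boxtimes \pi_0$. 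By the standard R-group / $\mu$-factor computation for unitary groups (via the rank-one reduction to the Siegel parabolic in $\U(1,1) \times \U(p-1,q-1)$), this $\mu$-factor vanishes precisely when $\chi$ occurs as a summand of the L-parameter of $\pi_0$, i.e.\ when $\kappa = \lambda_{0,i}$ for some $i$. Hence $R = 1$ in the first case (yielding irreducibility) while $R = \Z/2$ in the second (yielding a splitting $I(\chi, \pi_0) = \pi \oplus \pi'$ into two inequivalent summands).

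To identify each constituent as a (limit of) discrete series, I would invoke the Knapp--Zuckerman theorem: an irreducible tempered representation of $G$ is a limit of discrete series exactly when its Langlands data are supported on a cuspidal (fundamental) parabolic, that is, one whose Levi contains a compact Cartan. Since $\GL_1(\C)$ has a compact Cartan and $\U(p-1,q-1)$ has one too (the hypothesis $p,q>0$ ensures this group is defined), the parabolic $P$ of \S \ref{ss:parab-ind} is fundamental and each irreducible summand of $I(\chi,\pi_0)$ is a limit of discrete series of $G$. The explicit Harish-Chandra--Vogan data $(\lambda,\Psi)$ can then be read off by inserting $\kappa$ into an appropriate slot of the block decomposition \eqref{eq:lambda} of $\lambda_0$ and choosing the unique compatible extension of $\Delta_c^+$; the combinatorial details are deferred to \S \ref{sss:packets-real-lds}.

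The main technical hurdle is verifying the vanishing criterion for the $\mu$-factor when $\pi_0$ is a genuine \emph{limit} of discrete series rather than a discrete series, since the standard R-group formulas are often stated only under a regularity hypothesis on the Harish-Chandra parameter. I would handle this either by realizing $\pi_0$ as a cohomologically induced module $A_{\q_0}(\lambda_0)$ in the weakly fair range, so that the $\mu$-factor is obtained from the regular case by analytic continuation in $\lambda_0$, or by a translation-functor / coherent continuation argument reducing the singular-parameter case to the regular one.
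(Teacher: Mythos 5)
Your proposal is correct and essentially reproduces the Knapp--Zuckerman R-group argument that the paper cites for this lemma without spelling out: the little Weyl group $W(M)\cong\Z/2\Z$ for the maximal parabolic with Levi $\GL_1(\C)\times\U(p-1,q-1)$, the twist $\chi\mapsto\check\chi$ fixing conjugate-selfdual characters, the dichotomy governed by whether $\chi_\kappa$ already occurs in $\phi_0$ (equivalently whether $S_\phi$ with $\phi=2\chi\oplus\phi_0$ equals $S_{\phi_0}$ or contains it with index two), and the recognition of each constituent as a limit of discrete series because $P$ is fundamental. The multiplicity-freeness you assert also follows since the $R$-group is abelian, and the translation-functor reduction you sketch for singular infinitesimal character is indeed how Knapp--Zuckerman treat the limit case, so that concern is not a gap.

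One slip worth correcting: the phrase ``rank-one reduction to the Siegel parabolic in $\U(1,1)\times\U(p-1,q-1)$'' does not quite parse---that group is not a subgroup of $\U(p,q)$, and since $P$ is already maximal, the rank-one group attached to the unique reduced root relative to $A_M$ is $\U(p,q)$ itself. The $\mu$-factor must therefore be evaluated directly, either from the archimedean Shahidi $\gamma$-factors attached to the two pieces of the adjoint action on the nilradical (a Rankin--Selberg factor $L(s,\chi\otimes\phi_0^\vee)$ and an Asai-type factor in $\chi$), or by explicit computation with standard intertwining operators, and it is the Rankin--Selberg piece whose pole at $s=0$ occurs exactly when $\kappa=\lambda_{0,i}$ for some $i$. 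With that fixed, the outline matches the cited references.
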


\begin{lem}
\label{l:irred-temp}
Let $d$ be a nonnegative integer with $d \le \min \{ p, q \}$.
Let $\xi_1, \dots, \xi_d$ be unitary characters of $\C^\times$ which are not conjugate-selfdual with sign $(-1)^{n-1}$.
Let $\pi_0$ be a (limit of) discrete series representation of $\U(p-d,q-d)$.
Then $I(\xi_1, \dots, \xi_d, \pi_0)$ is irreducible and tempered.
\end{lem}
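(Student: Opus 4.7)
My plan is to handle temperedness and irreducibility separately.

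First, I would observe that temperedness is immediate. The inducing data on the Levi $M \cong (\C^\times)^d \times \U(p-d,q-d)$ consist of unitary characters $\xi_i$ and a (limit of) discrete series $\pi_0$, all tempered. Since normalized parabolic induction from a real cuspidal parabolic preserves temperedness, $I(\xi_1, \dots, \xi_d, \pi_0)$ is tempered.

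Next, for irreducibility, I would apply the Knapp--Stein R-group theorem together with the Knapp--Zuckerman computation of R-groups for unitary groups. The relative Weyl group $W(G, M) \cong (\Z/2\Z)^d \rtimes S_d$ permutes the $\GL_1$-factors via $S_d$ and has sign-change generators $c_i$ acting on the $i$-th factor by $\xi \mapsto (\xi^c)^{-1}$ and trivially on the remaining factors. Such a $c_i$ fixes $\xi_i$ iff $\xi_i$ is conjugate-selfdual. The standard R-group analysis then shows that $R_\sigma$ is the elementary abelian $2$-group generated by precisely those $c_i$ for which $\xi_i$ is conjugate-selfdual with sign $(-1)^{n-1}$ --- this being the sign singled out in the $d=1$ case of the previous lemma (determined by the parity of $n = \dim V$). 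Permutations and sign-changes with the opposite sign give scalar normalized intertwining operators and so drop out of $R_\sigma$. Under our hypothesis no $c_i$ contributes, hence $R_\sigma$ is trivial and Knapp--Stein yields the irreducibility of $I(\xi_1, \dots, \xi_d, \pi_0)$.

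The main obstacle will be the R-group computation and, in particular, pinning down the precise sign $(-1)^{n-1}$. This is the same sign that governs the reducibility in the $d=1$ case of the previous lemma, so consistency with that case provides a useful sanity check. Explicit verification can be done either by Goldberg's general method for classical groups or by direct computation of Plancherel measure factors at the relevant coroots; alternatively, one may induct on $d$ by peeling off $\xi_d$ last, using that the Levi $\U(p-d+1,q-d+1)$ has rank parity matching $(-1)^{n-1}$, and complementing the previous lemma with the (standard Knapp--Zuckerman) fact that $I(\chi,\pi_0)$ is irreducible whenever $\chi$ is unitary but not conjugate-selfdual with sign $(-1)^{n-1}$.
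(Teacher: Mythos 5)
The paper does not give a proof of this lemma at all; it simply cites Knapp--Zuckerman \cite{kz1,kz2}. Your proposal reconstructs the argument that those papers carry out: temperedness is automatic from parabolic induction of tempered data, and irreducibility follows from Knapp--Stein once one knows that no sign change $c_i$ contributes to the $R$-group. This is indeed the right route and it is essentially what the cited references do, so the proposal is sound.

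One imprecision worth flagging: your characterization of $R_\sigma$ as ``generated by precisely those $c_i$ for which $\xi_i$ is conjugate-selfdual with sign $(-1)^{n-1}$'' is not quite right. A sign change $c_i$ lies in $W_\sigma$ iff $\xi_i$ is conjugate-selfdual, and it lies in $R_\sigma$ iff in addition the rank-one induction through the corresponding $\U(p-d+1,q-d+1)$ is reducible. By the companion lemma (the unnamed $d=1$ lemma just above this one), reducibility holds only when $\xi_i$ has sign $(-1)^{n-1}$ \emph{and} the half-integer $\kappa_i$ attached to $\xi_i$ avoids the coordinates of the Harish-Chandra parameter $\lambda_0$ of $\pi_0$; if $\kappa_i = \lambda_{0,j}$ for some $j$ the rank-one induction is irreducible and $c_i$ drops out of $R_\sigma$. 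So the correct description is ``generated by the $c_i$ with $\xi_i$ conjugate-selfdual of sign $(-1)^{n-1}$ and $\kappa_i \notin \{\lambda_{0,j}\}$.'' Under the lemma's hypothesis none of the $\xi_i$ are conjugate-selfdual with sign $(-1)^{n-1}$, so the defect in your characterization does not affect the conclusion $R_\sigma = \{1\}$. You should also note that the permutation part of $W_\sigma$ and sign changes of the opposite parity never contribute to $R_\sigma$ because the relevant Plancherel factors vanish at the tempered point; this is a standard but nontrivial input and should be recorded rather than glossed over.

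Finally, the phrase ``cuspidal parabolic'' is not really the right invocation for temperedness --- the statement you want is simply that normalized parabolic induction of a (unitary) tempered representation of a Levi is tempered, which holds for any parabolic. Nothing about the cuspidal property is used.
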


In particular, we may write an irreducible tempered representation $\pi$ of $G$ as
\begin{equation}
\label{eq:temp}
 \pi = I(\xi_1, \dots, \xi_d, \pi_0),
\end{equation}
where 
\begin{itemize}
\item $d$ is a nonnegative integer with $d \le \min \{ p, q \}$;
\item $\xi_1, \dots, \xi_d$ are unitary characters of $\C^\times$ which are not conjugate-selfdual with sign $(-1)^{n-1}$;
\item $\pi_0$ is a (limit of) discrete series representation of $\U(p-d,q-d)$.
\end{itemize}

\subsection{Cohomologically induced representations}

For $x \in \sqrt{-1} \t_0$, let $\l$ (resp.~$\u$) be the sum of zero (resp.~positive) eigenspaces of $\operatorname{ad}(x)$ in $\g$.
Then $\q = \l \oplus \u$ is a $\theta$-stable parabolic subalgebra of $\g$.
We also write 
\[
 \q = \q(x)
\]
to indicate the dependence on $x$.
Let $L$ be the normalizer of $\q$ in $G$, so that $\l$ is the complexified Lie algebra of $L$.
If $x$ is of the form
\begin{equation}
\label{eq:x} 
 x = (\underbrace{x_1,\dots,x_1}_{p_1}, \dots, \underbrace{x_k,\dots,x_k}_{p_k}, \underbrace{x_1,\dots,x_1}_{q_1}, \dots, \underbrace{x_k,\dots,x_k}_{q_k}),
\end{equation}
where
\begin{itemize}
\item $x_i \in \R$;
\item $x_1 > \dots > x_k$;
\item $p_i, q_j \ge 0$;
\item $(p_i,q_i) \ne (0,0)$ for all $i$;
\item $p_1 + \dots + p_k = p$ and $q_1 + \dots + q_k = q$,
\end{itemize}
then we have
\[
 L \cong \U(p_1,q_1) \times \dots \times \U(p_k,q_k).
\]

Let $\lambda$ be the differential of a character of $L$ and regard it as an element in $\sqrt{-1} \t_0^*$ by restriction.
We consider a cohomologically induced representation
\[
 A_\q(\lambda)
\]
defined by \cite[(5.6)]{kv}.
The following summarizes some basic properties of $A_\q(\lambda)$.
\begin{itemize}
\item
The infinitesimal character of $A_\q(\lambda)$ is $\lambda+\rho$.
Here we choose a positive system $\Delta^+$ of $\Delta$ containing $\Delta(\u)$ and put $\rho = \frac{1}{2} \sum_{\alpha \in \Delta^+} \alpha$.
\item
If $\lambda$ is in the good range, i.e.
\[
 \langle \lambda + \rho, \alpha \rangle > 0
\]
for all $\alpha \in \Delta(\u)$, then $A_\q(\lambda)$ is nonzero and irreducible.
(Note that this condition does not depend the choice of $\rho$.)
\item 
If $\lambda$ is in the weakly fair range, i.e.
\[
 \langle \lambda + \rho(\u), \alpha \rangle \ge 0
\]
for all $\alpha \in \Delta(\u)$, then $A_\q(\lambda)$ is unitary (but possibly zero).
\end{itemize}
We also have the following, which is special to unitary groups.
\begin{itemize}
\item 
If $\lambda$ is in the weakly fair range and $A_\q(\lambda)$ is nonzero, then it is irreducible by \cite{matumoto, trapa}.
\item 
There is an algorithm due to Trapa \cite{trapa} which determines the nonvanishing and the Langlands parameter of $A_\q(\lambda)$ with $\lambda$ in the weakly fair range.
\end{itemize} 
Moreover, we have the following irreducibility result of Matumoto \cite[Theorem 3.3.1(2)]{matumoto}.

\begin{lem}
\label{l:irred-cohom}
Let $d$ be a nonnegative integer with $d \le \min \{ p, q \}$.
Let $\xi_1, \dots, \xi_d$ be unitary characters of $\C^\times$ which are not conjugate-selfdual with sign $(-1)^{n-1}$.
Let $\pi_0$ be a cohomologically induced representation of $\U(p-d,q-d)$ which is weakly fair and nonzero.
Then $I(\xi_1, \dots, \xi_d, \pi_0)$ is irreducible.
\end{lem}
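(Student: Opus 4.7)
The plan is to appeal directly to Matumoto's general irreducibility result \cite[Theorem 3.3.1(2)]{matumoto}, which is tailored to parabolic inductions of cohomologically induced representations. In our setup the inducing Levi is $(\C^\times)^d \times \U(p-d,q-d)$; the $\U(p-d,q-d)$-factor carries $\pi_0 = A_\q(\lambda)$, which is nonzero, weakly fair, and hence irreducible by \cite{matumoto,trapa}; and each $\C^\times$-factor carries a unitary character $\xi_i$. Matumoto's theorem asserts that the induced representation is irreducible provided the infinitesimal character of each $\xi_i$ does not ``collide'' with an entry of the infinitesimal character of $\pi_0$ in a specific way.

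To verify this hypothesis, I would first write a unitary character of $\C^\times$ as
\[
 \xi_i(z) = \bigg( \frac{z}{\sqrt{z\bar z}} \bigg)^{2\kappa_i} (z\bar z)^{s_i}
\]
with $\kappa_i \in \tfrac{1}{2}\Z$ and $s_i \in \sqrt{-1}\,\R$; its infinitesimal character is then the unordered pair $(\kappa_i + s_i,\, -\kappa_i + s_i)$. The collision in Matumoto's criterion requires both that this pair be real, i.e.\ $s_i = 0$, and that $\pm\kappa_i$ have the half-integer parity occurring in the infinitesimal character of $A_\q(\lambda)$, which lies in $\Z + \tfrac{(p-d)+(q-d)-1}{2} = \Z + \tfrac{n-1}{2}$. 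These two conditions together are precisely $s_i = 0$ and $\kappa_i \in \Z + \tfrac{n-1}{2}$; a case distinction on the parity of $n$ shows that this is exactly the defining condition for $\xi_i$ to be conjugate-selfdual with sign $(-1)^{n-1}$.

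Since the hypothesis of the lemma rules out this case for every $\xi_i$, no collision occurs and Matumoto's theorem yields the irreducibility of $I(\xi_1,\dots,\xi_d,\pi_0)$. The main point requiring care is essentially notational: matching the conventions of \cite{matumoto} for cohomological induction, parabolic induction, and the parametrization of characters of $\C^\times$ against those adopted in \S\ref{ss:real-notation}--\S\ref{ss:lds}, and in particular confirming that the $\rho$-shifts in Matumoto's infinitesimal-character condition land on the set $\Z + \tfrac{n-1}{2}$ identified above. A secondary concern is whether Matumoto's statement is formulated directly for the multi-factor Levi $(\C^\times)^d \times \U(p-d,q-d)$ or only for a single $\GL_1(\C)$-factor; in the latter case, I would reduce by induction in stages on $d$, noting that at each step the intermediate representation has an infinitesimal character determined explicitly by the remaining $\xi_j$'s and $\pi_0$, so that the collision criterion can be re-checked inductively under exactly the same sign hypothesis.
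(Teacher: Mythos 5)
Your proposal takes the same route as the paper: the lemma is stated in the paper as a direct quotation of Matumoto's Theorem 3.3.1(2), and the paper offers no further verification beyond the citation. Your sketch of why the hypotheses of Matumoto's theorem are satisfied — that the infinitesimal characters $(\kappa_i+s_i,-\kappa_i+s_i)$ of the $\xi_i$ fail to collide with the entries of the infinitesimal character of $A_\q(\lambda)$ precisely when $\xi_i$ is not conjugate-selfdual of sign $(-1)^{n-1}$, using that $\Z+\tfrac{(n-2d)-1}{2}=\Z+\tfrac{n-1}{2}$ — is correct and supplies exactly the bookkeeping the paper leaves implicit.
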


In this paper, we will use a normalized version of $A_\q(\lambda)$, which makes the statement of the main theorems cleaner.
Put 
\[
 \nA_\q(\lambda) = A_\q(\lambda - \rho(\u)), 
\]
where if $x$ is of the form \eqref{eq:x}, then $\lambda \in \sqrt{-1} \t_0^*$ is of the form 
\[
 \lambda = (\underbrace{\lambda_1,\dots,\lambda_1}_{p_1}, \dots, \underbrace{\lambda_k,\dots,\lambda_k}_{p_k}, \underbrace{\lambda_1,\dots,\lambda_1}_{q_1}, \dots, \underbrace{\lambda_k,\dots,\lambda_k}_{q_k})
\]
with $\lambda_i \in \Z + \frac{1}{2}(n-p_i-q_i)$.
Then
\begin{itemize}
\item
$\nA_\q(\lambda)$ is good if and only if $\lambda_i \ge \lambda_{i+1} + \frac{1}{2}(p_i + q_i + p_{i+1} + q_{i+1})$ for all $i$;
\item 
$\nA_\q(\lambda)$ is weakly fair if and only if $\lambda_i \ge \lambda_{i+1}$ for all $i$,
\end{itemize}
noting that 
\[
 \rho - \rho(\u) = (\alpha^{(1)}_1, \dots, \alpha^{(1)}_{p_1}, \dots, \alpha^{(k)}_1, \dots, \alpha^{(k)}_{p_k}, \beta^{(1)}_1, \dots, \beta^{(1)}_{q_1}, \dots, \beta^{(k)}_1, \dots, \beta^{(k)}_{q_k})
\]
with 
\[
 \{ \alpha^{(i)}_1, \dots, \alpha^{(i)}_{p_i}, \beta^{(i)}_1, \dots, \beta^{(i)}_{q_i} \}
 = \left\{ \left. \frac{p_i+q_i+1}{2} - j \, \right| \, 1 \le j \le p_i+q_i \right\}.
\]
With this normalization, we may write a (limit of) discrete series representation $\pi$ of $G$ associated to a pair $(\lambda, \Psi)$ as in \S \ref{ss:lds} as
\begin{equation}
\label{eq:lds}
 \pi = \nA_{\b}(\lambda), 
\end{equation}
where $\b = \t \oplus \mathfrak{n}$ is the $\theta$-stable Borel subalgebra of $\g$ with nilpotent radical $\mathfrak{n}$ such that $\Delta(\mathfrak{n}) = \Psi$ (see \cite[\S XI.8]{kv}).
More explicitly, if $\lambda$ is of the form \eqref{eq:lambda} and $\b = \q(x)$ is associated to 
\[
 x = (x^{(1)}_1, \dots, x^{(1)}_{p_1}, \dots, x^{(k)}_1, \dots, x^{(k)}_{p_k}, y^{(1)}_1, \dots, y^{(1)}_{q_1}, \dots, y^{(k)}_1, \dots, y^{(k)}_{q_k}),
\]
then the conditions on $\Psi$ in \S \ref{ss:lds} are equivalent to the following conditions on $x$:
\begin{itemize}
\item $x^{(1)}_1 > \dots > x^{(1)}_{p_1} > \dots > x^{(k)}_1 > \dots > x^{(k)}_{p_k}$;
\item $y^{(1)}_1 > \dots > y^{(1)}_{q_1} > \dots > y^{(k)}_1 > \dots > y^{(k)}_{q_k}$;
\item $x^{(i)}_{p_i} > y^{(i+1)}_1$ for all $1 \le i < k$;
\item $y^{(i)}_{q_i} > x^{(i+1)}_1$ for all $1 \le i < k$;
\item if $p_i - q_i = 0$, then either
\[
 x^{(i)}_1 > y^{(i)}_1 > x^{(i)}_2 > y^{(i)}_2 > \dots > x^{(i)}_{p_i} > y^{(i)}_{q_i}
\]
or 
\[
 y^{(i)}_1 > x^{(i)}_1 > y^{(i)}_2 > x^{(i)}_2 > \dots > y^{(i)}_{q_i} > x^{(i)}_{p_i};
\]
\item if $p_i - q_i = 1$, then 
\[
 x^{(i)}_1 > y^{(i)}_1 > x^{(i)}_2 > y^{(i)}_2 > \dots > x^{(i)}_{q_i} > y^{(i)}_{q_i} > x^{(i)}_{p_i};
\]
\item if $p_i - q_i = -1$, then
\[
 y^{(i)}_1 > x^{(i)}_1 > y^{(i)}_2 > x^{(i)}_2 > \dots > y^{(i)}_{p_i} > x^{(i)}_{p_i} > y^{(i)}_{q_i}.
\]
\end{itemize}
Note that if $\pi$ is a discrete series representation, then $\b$ is uniquely determined by $\lambda$.

\section{Statement of the main theorems}

In this section, we state the main theorems of this paper, which describe the theta lifts of tempered representations of real unitary groups explicitly.

\subsection{Setup}
\label{ss:main-theorem-setup}

We consider the theta lifting from $\U(W)$ to $\U(V)$, where $W$ is an $n$-dimensional skew-Hermitian space over $\C$ and $V$ is an $m$-dimensional Hermitian space over $\C$.
Let $(p,q)$ and $(r,s)$ be the signatures of $W$ and $V$, respectively, so that $p+q = n$ and $r+s = m$. 
As in \S \ref{ss:real-notation}, we identify $\U(W)$ and $\U(V)$ with $\U(p,q)$ and $\U(r,s)$, respectively.

From now on, we take the characters $\chi_V, \chi_W$ of $\C^\times$ given by
\[
 \chi_V(z) = \left( \frac{z}{\sqrt{z \bar{z}}} \right)^{m_0}, \quad
 \chi_W(z) = \left( \frac{z}{\sqrt{z \bar{z}}} \right)^{n_0} 
\]
for some fixed integers $m_0, n_0$ such that
\[
 m_0 \equiv m \bmod 2, \quad
 n_0 \equiv n \bmod 2,
\]
and the character $\psi$ of $\R$ given by
\[
 \psi(x) = e^{-2 \pi \sqrt{-1} x}.
\]
(We make this choice so that Lemma \ref{l:K-type-corresp} below holds.)
Then we write the theta lift of an irreducible representation $\pi$ of $\U(W) = \U(p,q)$ to $\U(V) = \U(r,s)$ as
\[
 \theta_{r,s}(\pi) = \theta_{V, W, \chi_V, \chi_W, \psi}(\pi).
\]

\subsection{Explicit description of theta lifts}

We now state our main theorems.

\begin{thm}
\label{t:main-lds}
Let $\pi$ be a (limit of) discrete series representation of $\U(W) = \U(p,q)$ and write $\pi = \nA_\b(\lambda)$ as in \eqref{eq:lds}.
Assume that its theta lift $\theta_{r,s}(\pi)$ to $\U(V) = \U(r,s)$ is nonzero.
Then we have 
\[
 \theta_{r,s}(\pi) = \nA_\q(\lambda'), 
\]
where $\q$ and $\lambda'$ are given as follows.
\begin{enumerate}
\item 
\label{main-lds-1}
Assume that $m > n$.
Write 
\[
 \lambda = (\alpha_1, \dots, \alpha_{p^+}, \beta_1, \dots, \beta_{p^-}, \gamma_1, \dots, \gamma_{q^+}, \delta_1, \dots, \delta_{q^-}) + \bigg( \frac{m_0}{2}, \dots, \frac{m_0}{2} \bigg),
\]
where 
\begin{itemize}
\item $\alpha_i, \gamma_j > 0$ and $\beta_i, \delta_j \le 0$;
\item $p^++p^- = p$ and $q^++q^- = q$,
\end{itemize}
and $\b = \q(x)$ with 
\[
 x = (x^+_1, \dots, x^+_{p^+}, x^-_1, \dots, x^-_{p^-}, y^+_1, \dots, y^+_{q^+}, y^-_1, \dots, y^-_{q^-}).
\]
We assume without loss of generality that 
\begin{itemize}
\item $x^+_1 > \dots > x^+_{p^+} > 0 > x^-_1 > \dots > x^-_{p^-}$;
\item $y^+_1 > \dots > y^+_{q^+} > 0 > y^-_1 > \dots > y^-_{q^-}$.
\end{itemize}
Then
\begin{itemize}
\item $p^++q^- \le r$ and $p^-+q^+ \le s$;
\item $\lambda'$ is given by
\[
 \lambda' = (\alpha_1, \dots, \alpha_{p^+}, \underbrace{0, \dots, 0}_{r-p^+-q^-}, \delta_1, \dots, \delta_{q^-}, \gamma_1, \dots, \gamma_{q^+}, \underbrace{0, \dots, 0}_{s-p^--q^+}, \beta_1, \dots, \beta_{p^-}) + \bigg(\frac{n_0}{2}, \dots, \frac{n_0}{2}\bigg);
\]
\item $\q = \q(x')$ is associated to
\[
 x' = (x^+_1, \dots, x^+_{p^+}, \underbrace{0, \dots, 0}_{r-p^+-q^-}, y^-_1, \dots, y^-_{q^-}, y^+_1, \dots, y^+_{q^+}, \underbrace{0, \dots, 0}_{s-p^--q^+}, x^-_1, \dots, x^-_{p^-}).
\]
\end{itemize}
\item
\label{main-lds-2}
Assume that $m \le n$.
Put $k = n - m$.
Write
\begin{align*}
 \lambda & = \bigg( \alpha_1, \dots, \alpha_{p^+}, 
 \underbrace{\frac{k-1}{2}, \dots, \frac{k-1}{2}}_{p_1},
 \underbrace{\frac{k-3}{2}, \dots, \frac{k-3}{2}}_{p_2}, \dots,
 \underbrace{-\frac{k-1}{2}, \dots, -\frac{k-1}{2}}_{p_k},
 \beta_1, \dots, \beta_{p^-}, \\
 & \phantom{{} = \bigg(}
 \gamma_1, \dots, \gamma_{q^+}, 
 \underbrace{\frac{k-1}{2}, \dots, \frac{k-1}{2}}_{q_1},
 \underbrace{\frac{k-3}{2}, \dots, \frac{k-3}{2}}_{q_2}, \dots,
 \underbrace{-\frac{k-1}{2}, \dots, -\frac{k-1}{2}}_{q_k},
 \delta_1, \dots, \delta_{q^-} \bigg) \\
 & + \bigg( \frac{m_0}{2}, \dots, \frac{m_0}{2} \bigg),
\end{align*}
where
\begin{itemize}
\item $\alpha_i, \gamma_j > \frac{k-1}{2}$ and $\beta_i, \delta_j < -\frac{k-1}{2}$;
\item $p_i, q_j \ge 0$;
\item $|p_i-q_i| \le 1$ for all $i$;
\item $p^++p^-+p_1 + \dots + p_k = p$ and $q^++q^-+q_1 + \dots +q_k = q$,
\end{itemize}
and $\b = \q(x)$ with
\begin{align*}
 x & = (x^+_1, \dots, x^+_{p^+}, x^{(1)}_1, \dots, x^{(1)}_{p_1}, x^{(2)}_1, \dots, x^{(2)}_{p_2}, \dots, x^{(k)}_1, \dots, x^{(k)}_{p_k}, x^-_1, \dots, x^-_{p^-}, \\
 & \phantom{{} = (} y^+_1, \dots, y^+_{q^+}, y^{(1)}_1, \dots, y^{(1)}_{q_1}, y^{(2)}_1, \dots, y^{(2)}_{q_2}, \dots, y^{(k)}_1, \dots, y^{(k)}_{q_k}, y^-_1, \dots, y^-_{q^-}).
\end{align*}
(When $k=0$, we interpret $\lambda$ and $x$ as 
\[
 (\alpha_1, \dots, \alpha_{p^+}, \beta_1, \dots, \beta_{p^-}, 
 \gamma_1, \dots, \gamma_{q^+}, \delta_1, \dots, \delta_{q^-})
 + \bigg( \frac{m_0}{2}, \dots, \frac{m_0}{2} \bigg)
\]
and
\[
 x = (x^+_1, \dots, x^+_{p^+}, x^-_1, \dots, x^-_{p^-}, y^+_1, \dots, y^+_{q^+}, y^-_1, \dots, y^-_{q^-}),
\]
respectively.) 
Then
\begin{itemize}
\item $p_i + q_i >0$ for all $1 \le i \le k$;
\item if $k \ge 2$, then either the conditions
\begin{enumerate}
\item 
\label{e+1}
$p_i - q_i = 1$ for all $1 < i < k$;
\item
\label{e+2}
$p_1 - q_1 = 1$ or $0$;
\item
\label{e+3}
$p_k - q_k = 1$ or $0$,
\end{enumerate}
or the conditions
\begin{enumerate}[resume]
\item
\label{e-1}
$p_i - q_i = -1$ for all $1 < i < k$;
\item
\label{e-2}
$p_1 - q_1 = -1$ or $0$;
\item
\label{e-3}
$p_k - q_k = -1$ or $0$
\end{enumerate}
hold;
\item if $k \ge 2$, then
\[
\begin{cases}
 y^{(1)}_1 > x^{(1)}_1 > \dots > y^{(1)}_{q_1} > x^{(1)}_{p_1} &
 \text{if the conditions \eqref{e+1}, \eqref{e+2}, \eqref{e+3} hold and  $p_1 - q_1 = 0$;} \\
 x^{(k)}_1 > y^{(k)}_1 > \dots > x^{(k)}_{p_k} > y^{(k)}_{q_k} & 
 \text{if the conditions \eqref{e+1}, \eqref{e+2}, \eqref{e+3} hold and  $p_k - q_k = 0$;} \\
 x^{(1)}_1 > y^{(1)}_1 > \dots > x^{(1)}_{p_1} > y^{(1)}_{q_1} & 
 \text{if the conditions \eqref{e-1}, \eqref{e-2}, \eqref{e-3} hold and  $p_1 - q_1 = 0$;} \\
 y^{(k)}_1 > x^{(k)}_1 > \dots > y^{(k)}_{q_k} > x^{(k)}_{p_k} & 
 \text{if the conditions \eqref{e-1}, \eqref{e-2}, \eqref{e-3} hold and  $p_k - q_k = 0$;}
\end{cases} 
\]
\item
$r = p^+ + q^- + r_1 + \dots + r_k$ and $s = p^- + q^+ + s_1 + \dots + s_k$, where
\[
 (r_i, s_i) =
\begin{cases}
 (p_i-1, q_i) & \text{if $p_i-q_i = 1$, or $p_i-q_i = 0$ and $y^{(i)}_1 > x^{(i)}_1 > \dots > y^{(i)}_{q_i} > x^{(i)}_{p_i}$;} \\
 (p_i, q_i-1) & \text{if $p_i-q_i = -1$, or $p_i-q_i = 0$ and $x^{(i)}_1 > y^{(i)}_1 > \dots > x^{(i)}_{p_i} > y^{(i)}_{q_i}$;}
\end{cases}
\]
\item $\lambda'$ is given by 
\begin{align*}
 \lambda' & = \bigg( \alpha_1, \dots, \alpha_{p^+}, \underbrace{\frac{k-1}{2}, \dots, \frac{k-1}{2}}_{r_1}, \underbrace{\frac{k-3}{2}, \dots, \frac{k-3}{2}}_{r_2}, \dots, 
\underbrace{-\frac{k-1}{2}, \dots, -\frac{k-1}{2}}_{r_k}, 
 \delta_1, \dots, \delta_{q^-}, \\
 & \phantom{{} = \bigg(}
 \gamma_1, \dots, \gamma_{q^+},
\underbrace{\frac{k-1}{2}, \dots, \frac{k-1}{2}}_{s_1}, \underbrace{\frac{k-3}{2}, \dots, \frac{k-3}{2}}_{s_2}, \dots, 
\underbrace{-\frac{k-1}{2}, \dots, -\frac{k-1}{2}}_{s_k}, 
 \beta_1, \dots, \beta_{p^-} \bigg) \\
 & + \bigg( \frac{n_0}{2}, \dots, \frac{n_0}{2} \bigg);
\end{align*}
\item 
$\q = \q(x')$ is associated to
\begin{align*}
 x' & = (x^+_1, \dots, x^+_{p^+}, z^{(1)}_1, \dots, z^{(1)}_{r_1}, z^{(2)}_1, \dots, z^{(2)}_{r_2}, \dots, z^{(k)}_1, \dots, z^{(k)}_{r_k}, y^-_1, \dots, y^-_{q^-}, \\
 & \phantom{{} = (} y^+_1, \dots, y^+_{q^+}, w^{(1)}_1, \dots, w^{(1)}_{s_1}, w^{(2)}_1, \dots, w^{(2)}_{s_2}, \dots, w^{(k)}_1, \dots, w^{(k)}_{s_k},  x^-_1, \dots, x^-_{p^-})
\end{align*}
such that
\[
\begin{cases}
 z^{(i)}_1 > w^{(i)}_1 > \dots > z^{(i)}_{r_i} > w^{(i)}_{s_i} & 
 \text{if $p_i - q_i = 1$;} \\
 w^{(i)}_1 > z^{(i)}_1 > \dots > w^{(i)}_{s_i} > z^{(i)}_{r_i} &
 \text{if $p_i - q_i = -1$.}
\end{cases} 
\]
\end{itemize}
\end{enumerate}
\end{thm}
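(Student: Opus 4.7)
The plan is to reduce the theorem to the good-range case settled by Li \cite{li90} via a global argument. I globalize the local setup, invoke Li's theorem at an auxiliary real place with sufficiently regular infinitesimal character, and transport the resulting information back to the place of interest $v_0$ via Arthur's multiplicity formula.

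\textbf{Globalization.} First I would fix a totally real number field $F \ne \Q$ having at least two real places $v_0, v_1$, choose anisotropic unitary groups $G, H$ over $F$ with $G_{v_0} \cong \U(p,q)$ and $H_{v_0} \cong \U(r,s)$, and construct an irreducible automorphic representation $\varPi$ of $G(\A)$ with $\varPi_{v_0} \cong \pi$ such that $\varPi_{v_1}$ is a discrete series representation whose Harish-Chandra parameter is a sufficiently regular deformation of the parameter of $\pi$ (arising from the same global $A$-parameter with the coincidences in $\lambda$ pulled apart), and whose components at the remaining places are easy to describe. Such a $\varPi$ is produced by invoking Arthur's multiplicity formula for a carefully chosen global $A$-parameter. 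To show that the global theta lift $\theta(\varPi)$ to $H(\A)$ is nonzero, I would verify local nonvanishing of $\theta(\varPi_v)$ at every place — at $v_0$ by the hypothesis on $\pi$, at $v_1$ by Li's theorem, and elsewhere by direct inspection using Atobe's criterion \cite{atobe} — and then combine Proposition \ref{p:intro} with the Rallis inner product formula to conclude global nonvanishing.

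\textbf{Transfer and readoff.} Once $\theta(\varPi)$ is known to be nonzero and automorphic, Arthur's multiplicity formula pins down its global $A$-parameter, and hence the local $A$-packet at $v_0$ containing $\theta_{r,s}(\pi)$. At $v_1$, Li's theorem expresses the local theta lift as an explicit cohomologically induced module $\nA_{\q_1}(\lambda'_1)$ in the good range, and reading off the global parameter from this formula supplies the $A$-parameter we need. The remaining task is to identify $\theta_{r,s}(\pi) = \theta(\varPi)_{v_0}$ as the unique element of the local packet matching the prescribed $K$-type data; for this I would appeal to Trapa's algorithm \cite{trapa} for cohomological induction in the weakly fair range, together with the irreducibility results recorded above, to verify that the candidate $\nA_\q(\lambda')$ in the statement is nonzero, irreducible, and carries the correct $A$-parameter.

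\textbf{Main obstacle.} The principal difficulty lies in the globalization step rather than the parameter bookkeeping. One must engineer $\varPi$ so that $\varPi_{v_1}$ is regular enough for Li's theorem yet compatible with $\pi$ at the level of the global $A$-parameter, so that $\theta(\varPi_v) \ne 0$ at every place (requiring Atobe's criterion at the finite places and at $v_1$), and so that Proposition \ref{p:intro} applies at $v_0$. The unconditional Proposition \ref{p:intro} is strictly sharper than the implication available from \cite[Proposition 11.5]{gqt} — it pins down the exact pair $(r,s)$ rather than merely its parity class — which is exactly what makes the explicit signature assignment in case \ref{main-lds-1} and the block-by-block description in case \ref{main-lds-2} rigorous. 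The intricate combinatorics in case \ref{main-lds-2}, with its conditions on $p_i - q_i$ and on the interleaving of the $x^{(i)}_j, y^{(i)}_j$, then falls out by translating Atobe's nonvanishing criterion and the weakly fair range conditions through this global correspondence.
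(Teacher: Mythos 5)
Your outline for the case $m > n$ (case \eqref{main-lds-1}) is essentially the paper's argument: globalize over a totally real field, place the given $\pi$ at one real place, a regular-infinitesimal-character deformation of $\pi$ (to which Li's theorem applies) at another, construct the global cuspidal $\varPi$ via Arthur's multiplicity formula, use Proposition \ref{p:intro} at the real places together with the Rallis inner product formula to show $\theta(\varPi) \ne 0$, and then read off the character $\eta'_{v_0}$ from the product condition $\eta'\circ\Delta=\epsilon_{\varPhi'}$. (The paper actually uses a real quartic field with four archimedean places, placing two copies of a highly regular auxiliary representation at $v_2,v_3$ so that their characters cancel; this is a technical convenience for the sign bookkeeping rather than a conceptual difference. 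The bookkeeping that identifies $\nA_\q(\lambda')$ with the packet member $\sigma(\phi',\eta')$, via the M{\oe}glin--Renard description, is also a nontrivial step, which you gesture at via Trapa's algorithm but is done differently in the paper.)

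The genuine gap is your treatment of case \eqref{main-lds-2}, i.e., $m \le n$. Your final paragraph asserts that the combinatorics of this case ``falls out by translating Atobe's nonvanishing criterion and the weakly fair range conditions through this global correspondence,'' but the global argument simply does not apply here. If $m < n$ and $\varPi_{v_1}$ is a discrete series of $\U(p,q)$ with sufficiently regular infinitesimal character, then its theta lift to the smaller group $\U(r,s)$ vanishes --- the nonvanishing requires precisely the coincidences in $\lambda$ that you proposed to pull apart. So the global theta lift would vanish and nothing can be transferred. The paper instead proves case \eqref{main-lds-2} by a separate, purely local argument: one uses Corollary \ref{c:going-down} to constrain the block structure, constructs an explicit candidate $\sigma=\nA_\q(\lambda')$ on $\U(r,s)$, switches the roles of $V$ and $W$ and applies case \eqref{main-lds-1} (now in the range $\dim V > \dim W$) together with the going-up Corollary \ref{c:going-up} to show $\theta_{p,q}(\sigma)=\pi$, and, when the middle multiplicities are larger, runs an induction via Corollary \ref{c:jacquet} and the induction principle \cite[Theorem 4.5.5]{paul1}, supplemented by a vanishing computation for the other constituent of the induced module. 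None of this is a ``read-off'' from the global correspondence, and your proposal as written would not produce a proof of the $m\le n$ case.
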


In particular, $\theta_{r,s}(\pi)$ is a (limit of) discrete series representation when $m \le n+1$.
Also, this theorem shows that if $\theta_{r,s}(\pi)$ is nonzero, then the associated cohomologically induced representation $\nA_\q(\lambda')$ is nonzero, which was not known when $m \ge n+2$ and $\nA_\q(\lambda')$ is not good.
It is not clear to the author whether this nonvanishing follows directly from a result of Trapa \cite[Theorem 7.9]{trapa}.

\begin{thm}
\label{t:main-temp}
Let $\pi$ be an irreducible tempered representation of $\U(W) = \U(p,q)$ and write $\pi = I(\xi_1, \dots, \xi_d, \pi_0)$ as in \eqref{eq:temp}.
Assume that its theta lift $\theta_{r,s}(\pi)$ to $\U(V) = \U(r,s)$ is nonzero.
Then we have $d \le \min \{ r,s \}$ and
\[
 \theta_{r,s}(\pi) = I(\xi_1 \chi_V^{-1} \chi_W, \dots, \xi_d \chi_V^{-1} \chi_W, \theta_{r-d,s-d}(\pi_0)).
\]
\end{thm}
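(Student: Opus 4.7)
\textbf{Proof plan for Theorem \ref{t:main-temp}.} The strategy is the standard induction principle for theta lifting, combined with the irreducibility Lemma \ref{l:irred-temp} which is available precisely under the hypothesis that the $\xi_i$ are not conjugate-selfdual with sign $(-1)^{n-1}$.

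First I would set up the Kudla filtration. Let $P = MU \subset \U(W)$ be the parabolic subgroup used in \S\ref{ss:parab-ind} to define $I(\xi_1,\dots,\xi_d,\pi_0)$, with Levi $M \cong (\C^\times)^d \times \U(W_0)$, where $W_0$ is a skew-Hermitian space of signature $(p-d,q-d)$. Restriction of the Weil representation $\omega = \omega_{V,W,\chi_V,\chi_W,\psi}$ to $\U(V) \times P$ carries a $\U(V) \times M$-equivariant filtration (obtained by iterating the one-step filtration coming from the mixed model, or equivalently from the orbits of $M$ on the maximal isotropic subspace of $W$) whose top quotient is
\[
 \bigl(\xi\text{-twisted induction from $\GL_d(\C)$}\bigr) \hotimes \omega_{V,W_0,\chi_V,\chi_W,\psi},
\]
while the remaining subquotients involve Weil representations $\omega_{V_j, W_0,\dots}$ for $V_j \subsetneq V$ of smaller dimension, tensored with representations of $M$ in which the $\GL_d$-factor must carry a character that is (up to an unramified twist) conjugate-selfdual with sign $(-1)^{n-1}$; this last point is the place where the parity $n$ enters, through the appearance of $\chi_W$ and $\chi_V$ in the formula for the Weil representation on the Siegel parabolic.

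Second, assuming $\theta_{r,s}(\pi) \ne 0$, Howe duality gives a nonzero map $\omega \twoheadrightarrow \theta_{r,s}(\pi) \boxtimes \pi$; passing to the adjoint and applying Frobenius reciprocity for $P$, I obtain a nonzero $\U(V) \times M$-equivariant map
\[
 \omega \longrightarrow \theta_{r,s}(\pi) \boxtimes \bigl(\delta_P^{1/2}(\xi_1 \boxtimes \cdots \boxtimes \xi_d) \boxtimes \pi_0\bigr).
\]
I then examine this map through the Kudla filtration. By the hypothesis that each $\xi_i$ is \emph{not} conjugate-selfdual with sign $(-1)^{n-1}$, all subquotients below the top are killed (the character of the $\GL_d$-factor appearing there is conjugate-selfdual with the wrong sign and so cannot coincide with $\xi_i$), hence the map factors through the top quotient. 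This forces a nonzero $\U(V) \times M$-map
\[
 \omega_{V,W_0,\chi_V,\chi_W,\psi} \longrightarrow \theta_{r,s}(\pi) \boxtimes \bigl((\text{twist})(\xi_1 \boxtimes \cdots \boxtimes \xi_d) \boxtimes \pi_0\bigr),
\]
from which I read off, by Howe duality applied to the pair $(\U(V),\U(W_0))$, both that $d \le \min\{r,s\}$ (so that the required parabolic $Q \subset \U(V)$ with Levi $(\C^\times)^d \times \U(V_0)$ exists), that $\theta_{r-d,s-d}(\pi_0) \ne 0$, and that $\theta_{r,s}(\pi)$ embeds into $I(\xi_1 \chi_V^{-1}\chi_W,\dots,\xi_d \chi_V^{-1}\chi_W, \theta_{r-d,s-d}(\pi_0))$. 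The shift $\chi_V^{-1}\chi_W$ is exactly the twist by which the Weil representation on the $\GL_d$-factor of $M$ differs from the Weil representation on the corresponding $\GL_d$-factor of the Levi of $Q$.

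Finally, since $\pi$ is tempered and irreducible, and $\theta$-lifting preserves (by the Rallis inner product philosophy and the standard bounds on matrix coefficients of $\omega$) the property that $\theta_{r-d,s-d}(\pi_0)$ is either tempered or a unitary weakly fair $A_\q(\lambda)$ of the kind produced by Theorem \ref{t:main-lds}, Lemma \ref{l:irred-temp} or Lemma \ref{l:irred-cohom} applies (with the same hypothesis on the $\xi_i$) to show that the induced representation on the right-hand side is irreducible. Hence the embedding above is an isomorphism, which is the claim.

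The main obstacle is the second step: justifying rigorously the vanishing of Hom's against every non-top subquotient of the Kudla filtration. The combinatorial input is that each lower subquotient forces a character on the $\GL_d$-factor that is conjugate-selfdual of sign $(-1)^{n-1}$ (this is a direct computation with $\chi_V,\chi_W$ and with $\delta_P$), which the hypothesis on $\xi_1,\dots,\xi_d$ excludes; the analytic input is that one is working with $(\g,K)$-modules, so the filtration has to be set up in that category and the intertwining space with an admissible representation is finite-dimensional, allowing the argument to proceed exactly as in the $p$-adic case treated in \cite{gi1,gi2}.
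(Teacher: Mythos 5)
Your proposal and the paper's proof share the final irreducibility step (Lemma~\ref{l:irred-cohom} applied to $\theta_{r-d,s-d}(\pi_0)$, whose weak fairness and nonvanishing come from Theorem~\ref{t:main-lds}), but diverge substantially earlier, and the earlier part of your plan has genuine gaps.

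The paper's argument is short and modular: it invokes Corollary~\ref{c:jacquet} (iterated $d$ times) to obtain simultaneously $d \le \min\{r,s\}$ and the nonvanishing $\theta_{r-d,s-d}(\pi_0) \ne 0$ --- this corollary is derived from Atobe's criterion (Theorem~\ref{t:atobe}), which in turn rests on the Gan--Gross--Prasad conjecture proved by Xue; then it cites Paul's induction principle \cite[Theorem 4.5.5]{paul1}, an archimedean-specific result whose input is precisely the nonvanishing of $\theta_{r-d,s-d}(\pi_0)$, to place $\theta_{r,s}(\pi)$ as a subquotient of the induced representation. You instead propose to rederive both the nonvanishing and the subquotient statement from a Kudla-type filtration of the Weil representation along the parabolic $P \subset \U(W)$. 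Two problems make this route substantially harder than you suggest.

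First, a logical gap independent of the analytic setting: even granting that your map factors through the top graded piece $(\chi\text{-twist on }\GL_d) \hotimes \omega_{V,W_0}$, what Howe duality then gives you is $\Hom_{\U(V) \times \U(W_0)}(\omega_{V,W_0}, \theta_{r,s}(\pi) \boxtimes \pi_0) \ne 0$, i.e.\ the nonvanishing of the theta lift of $\pi_0$ from $\U(W_0)$ to the \emph{same} $m$-dimensional space $V$ of signature $(r,s)$. You then assert that this yields $\theta_{r-d,s-d}(\pi_0) \ne 0$, but that is the theta lift to the \emph{smaller} space $V_0$ of dimension $m-2d$; passing from nonvanishing at a larger space to nonvanishing at a smaller one is the ``going-down'' direction, which fails in general (one may be below the first occurrence index). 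To extract the $V_0$-lift from the $V$-lift you would need to run the analogous filtration on the $\U(V)$ side as well and show that the relevant piece there forces the $\GL_d$-factor of $V$ to absorb exactly $d$ hyperbolic planes; your write-up never performs that second filtration, and the conclusion $\theta_{r-d,s-d}(\pi_0) \ne 0$ as stated is a non sequitur. This is exactly the gap that Corollary~\ref{c:jacquet} is designed to fill, and why the GGP input is not optional in the paper's route.

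Second, the analytic setting. In the $p$-adic case the Kudla filtration argument works because the Jacquet functor is exact and Frobenius reciprocity for parabolic induction is a clean adjunction. In the archimedean case the Casselman--Jacquet functor is only right exact and the adjunction has to be treated with considerable care; it is not established in the literature that the filtration argument goes through verbatim in the category of $(\g,K)$-modules, and your parenthetical ``allowing the argument to proceed exactly as in the $p$-adic case'' dismisses the obstruction too quickly. Paul's induction principle, which the paper cites instead, is proved by different means (lowest-$K$-type/degree arguments in the Fock model) precisely because the $p$-adic argument does not port over directly. If you want to avoid citing Paul and Atobe, you would need to prove an archimedean Kudla filtration theorem first, which is a nontrivial project in its own right and not a routine detail.
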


The rest of this paper is devoted to the proof of Theorems \ref{t:main-lds} and \ref{t:main-temp}.

\section{$L$- and $A$-packets}

In this section, we describe the representations in some local $L$- and $A$-packets for unitary groups explicitly.

\subsection{Parameters and packets}

Let $F$ be a local field of characteristic zero and $W_F$ the Weil group of $F$.
Put
\[
 L_F =
 \begin{cases}
  W_F & \text{if $F$ is archimedean;} \\
  W_F \times \SL_2(\C) & \text{if $F$ is nonarchimedean.}
 \end{cases}
\]
Let $E$ be a quadratic extension of $F$.
Following \cite[\S 8]{ggp1}, we regard an $L$-parameter $\phi : L_F \rightarrow {}^L \U_n$ (resp.~an $A$-parameter $\phi : L_F \times \SL_2(\C) \rightarrow {}^L \U_n$) for $\U_n$ as an $n$-dimensional conjugate-selfdual representation of $L_E$ (resp.~$L_E \times \SL_2(\C)$) with sign $(-1)^{n-1}$.
Here $\U_n$ stands for the unitary group of any $n$-dimensional Hermitian or skew-Hermitian space over $E$ and ${}^L \U_n =  \GL_n(\C) \rtimes W_F$ is the $L$-group of $\U_n$.
For any such a parameter $\phi$, we denote by $S_\phi$ the component group of the centralizer of the image of $\phi$ in $\GL_n(\C)$ and by $\widehat{S}_\phi$ the group of characters of $S_\phi$.
Note that $S_\phi$ is a finitely generated free $\Z/2\Z$-module.
For any positive integer $d$, we denote by $S_d$ the unique $d$-dimensional irreducible representation of $\SL_2(\C)$.

Fix $\varepsilon = \pm 1$.
Let $V$ be an $n$-dimensional $\varepsilon$-Hermitian space over $E$ and $\Irr(\U(V))$ the set of equivalence classes of irreducible representations of $\U(V)$.
Then the local Langlands correspondence \cite{mok,kmsw,mr18} gives a partition of $\Irr(\U(V))$ into finite sets called $L$-packets:
\begin{equation}
\label{eq:llc}
 \Irr(\U(V)) = \bigsqcup_\phi \Pi_\phi(\U(V)),  
\end{equation}
where $\phi$ runs over $L$-parameters for $\U_n$.
Moreover, given the choice of a Whittaker datum, there exists a canonical bijection
\[
 \bigsqcup_V \Pi_\phi(\U(V)) \, \leftrightarrow \, \widehat{S}_\phi,
\]
where $V$ runs over isometry classes of $n$-dimensional $\varepsilon$-Hermitian spaces over $E$.
We denote by $\pi(\phi, \eta)$ the irreducible representation associated to $\eta \in \widehat{S}_\phi$.

To any $A$-parameter $\phi$ for $\U_n$, Arthur's endoscopic classification \cite{mok,kmsw} assigns a finite set called an $A$-packet
\[
 \Pi_{\phi}(\U(V))
\]
consisting of semisimple representations of $\U(V)$ of finite length, which are indexed by $\widehat{S}_\phi$.
We denote by $\sigma(\phi, \eta)$ the representation associated to $\eta \in \widehat{S}_{\phi}$.

\subsection{Whittaker data}
\label{ss:whit}

To index the representations in $L$- and $A$-packets as in the previous subsection, we take the following Whittaker datum (which is a conjugacy class of pairs $(N,\psi_N)$ consisting of the unipotent radical $N$ of a Borel subgroup of $\U(V_n^+)$ and a generic character $\psi_N$ of $N$) in this paper.
If $n$ is odd, then there is a unique Whittaker datum.
Thus assume that $n$ is even.
Then by \cite[Proposition 12.1]{ggp1}, the Whittaker data are parametrized by $\mathrm{N}_{E/F}(E^\times)$-orbits of nontrivial additive characters of $E/F$ (resp.~$F$) if $\varepsilon = +1$ (resp.~$\varepsilon = -1$).
On the other hand, we have fixed an element $\delta \in E^\times$ such that $\Tr_{E/F}(\delta) = 0$ and a nontrivial additive character $\psi$ of $F$.
Define a nontrivial additive character $\psi^E$ of $E/F$ by $\psi^E(x) = \psi(\frac{1}{2} \Tr_{E/F}(\delta x))$.
Following \cite[\S 2.4]{gi2}, we take the Whittaker datum associated to $\psi^E$ (resp.~$\psi$) if $\varepsilon = +1$ (resp.~$\varepsilon = -1$).

If $F = \R$, we always assume that $\delta = \sqrt{-1}$ and $\psi(x) = e^{- 2 \pi \sqrt{-1} x}$.
Then our Whittaker datum agrees with the Whittaker datum $\mathfrak{w}_+$ as in \cite[\S A.3]{atobe}.
Moreover, by \cite[Theorem A.4]{atobe}, it also agrees with the Whittaker datum as in \cite[Remarque 4.5]{mr19}.

\subsection{The real case}

Suppose that $F = \R$.
For any $\kappa \in \frac{1}{2} \Z$, we define a character $\chi_\kappa$ of $W_\C = \C^\times$ by 
\[
 \chi_\kappa(z) = \bigg( \frac{z}{\sqrt{z \bar{z}}} \bigg)^{2 \kappa}.
\]
For any character $\xi$ of $\C^\times$, we define another character $\check{\xi}$ of $\C^\times$ by $\check{\xi}(z) = \xi(\bar{z})^{-1}$.

\subsubsection{(Limits of) discrete series $L$-packets}
\label{sss:packets-real-lds}

Let $\Irr_{\mathrm{lds}}(\U(p,q))$ be the set of equivalence classes of (limits of) discrete series representations of $\U(p,q)$.
Then \eqref{eq:llc} restricts to a partition
\[
 \Irr_{\mathrm{lds}}(\U(p,q)) = \bigsqcup_\phi \Pi_\phi(\U(p,q)),
\]
where $\phi$ runs over (limits of) discrete series $L$-parameters for $\U_n$ with $n = p+q$.
Here we say that an $L$-parameter $\phi$ for $\U_n$ is (limit of) discrete series if $\phi$ is of the form 
\[
 \phi = \chi_{\kappa_1} \oplus \dots \oplus \chi_{\kappa_n}
\]
with $\kappa_i \in \Z + \frac{n-1}{2}$.
For such a parameter $\phi$, we assume without loss of generality that $\kappa_1 \ge \dots \ge \kappa_n$ and identify $S_\phi$ with a quotient of a free $\Z/2\Z$-module
\[
 \widetilde{S}_\phi = (\Z/2\Z) e_1 \oplus \dots \oplus (\Z/2\Z) e_n,
\]
where $e_i$ corresponds to $\chi_{\kappa_i}$, in such a way that $\widehat{S}_\phi$ consists of the characters $\eta$ of $\widetilde{S}_\phi$ satisfying 
\[
 \eta(e_i) = \eta(e_j)
\]
for all $i,j$ such that $\kappa_i = \kappa_j$.

Let $\eta \in \widehat{S}_\phi$.
For $1 \le i \le n$, we define a pair of integers $(p_i,q_i)$ by
\[
 (p_i, q_i) = 
 \begin{cases}
  (1,0) & \text{if $\eta(e_i) = (-1)^{i-1}$;} \\
  (0,1) & \text{if $\eta(e_i) = (-1)^i$.}
 \end{cases}
\]
Then by \cite[Th\'eor\`eme 1.1]{mr19}, $\pi(\phi,\eta)$ is a representation of $\U(p,q)$ if and only if
\[
 p = p_1 + \dots + p_n, \quad
 q = q_1 + \dots + q_n,
\]
in which case we have
\[
 \eta(e_1 + \dots + e_n) = (-1)^{\frac{1}{2}(p-q)(p-q-1)}
\]
and 
\[
 \pi(\phi,\eta) = \nA_\b(\lambda).
\]
Here $\lambda$ is given by
\[
 \lambda = (\underbrace{\kappa_1,\dots,\kappa_1}_{p_1}, \dots, \underbrace{\kappa_n,\dots,\kappa_n}_{p_n}, \underbrace{\kappa_1,\dots,\kappa_1}_{q_1}, \dots, \underbrace{\kappa_n,\dots,\kappa_n}_{q_n})
\]
and $\b = \q(x)$ is associated to
\[
 x = (\underbrace{x_1,\dots,x_1}_{p_1}, \dots, \underbrace{x_n,\dots,x_n}_{p_n}, \underbrace{x_1,\dots,x_1}_{q_1}, \dots, \underbrace{x_n,\dots,x_n}_{q_n})
\]
for any $x_1, \dots, x_n \in \R$ such that $x_1 > \dots > x_n$.

Assume that $p,q > 0$.
Let $\chi$ be a conjugate-selfdual character of $\C^\times$ with sign $(-1)^{n-1}$, so that $\chi = \chi_\kappa$ for some $\kappa \in \Z + \frac{n-1}{2}$.
Let $\pi_0$ be a (limit of) discrete series representation of $\U(p-1,q-1)$ and write $\pi_0 = \pi(\phi_0, \eta_0)$, where $\phi_0$ is a (limit of) discrete series $L$-parameter for $\U_{n-2}$ and $\eta_0$ is a character of $S_{\phi_0}$.
Define a (limit of) discrete series $L$-parameter for $\U_n$ by 
\[
 \phi = 2 \chi \oplus \phi_0.
\]
We may naturally identify $S_{\phi_0}$ with a subgroup of $S_\phi$.
Then we have 
\[
 I(\chi, \pi_0) = \bigoplus_\eta \pi(\phi, \eta),
\]
where $\eta$ runs over elements in $\widehat{S}_\phi$ such that $\eta|_{S_{\phi_0}} = \eta_0$.

\subsubsection{Tempered $L$-packets}

Let $\Irr_{\mathrm{temp}}(\U(p,q))$ be the set of equivalence classes of irreducible tempered representations of $\U(p,q)$.
Then \eqref{eq:llc} restricts to a partition
\[
 \Irr_{\mathrm{temp}}(\U(p,q)) = \bigsqcup_\phi \Pi_\phi(\U(p,q)),  
\]
where $\phi$ runs over tempered $L$-parameters for $\U_n$ with $n=p+q$.
Here we say that an $L$-parameter $\phi$ for $\U_n$ is tempered if $\phi$ is of the form 
\[
 \phi = \chi_{\kappa_1} \oplus \dots \oplus \chi_{\kappa_{n_0}} \oplus \xi_1 \oplus \dots \oplus \xi_d \oplus \check{\xi}_1 \oplus \dots \oplus \check{\xi}_d,
\]
where
\begin{itemize}
\item $\kappa_i \in \Z + \frac{n-1}{2}$;
\item $\xi_i$ is a unitary character of $\C^\times$ which is not conjugate-selfdual with sign $(-1)^{n-1}$;
\item $n_0 + 2d = n$.
\end{itemize}
For such a parameter $\phi$, we define a (limit of) discrete series $L$-parameter $\phi_0$ for $\U_{n_0}$ by 
\[
 \phi_0 = \chi_{\kappa_1} \oplus \dots \oplus \chi_{\kappa_{n_0}}.
\]
Then $\Pi_{\phi}(\U(p,q))$ consists of the parabolically induced representations
\[
 I(\xi_1, \dots, \xi_d, \pi_0)
\]
for all $\pi_0 \in \Pi_{\phi_0}(\U(p-d,q-d))$, which are irreducible by Lemma \ref{l:irred-temp}.
(When $d > \min\{ p, q \}$, we interpret $\Pi_{\phi_0}(\U(p-d,q-d))$ as the empty set.)
Moreover, via the natural identification $S_\phi = S_{\phi_0}$, the character of $S_\phi$ associated to $I(\xi_1, \dots, \xi_d, \pi_0)$ is equal to the character of $S_{\phi_0}$ associated to $\pi_0$.

\subsubsection{Some $A$-packets}
\label{sss:packets-real-coh}

We consider the $A$-packet $\Pi_{\phi'}(\U(r,s))$, where $\phi'$ is an $A$-parameter for $\U_m$ with $m=r+s$ of the form
\[
 \phi' = \chi_{\mu_1} \oplus \cdots \oplus \chi_{\mu_n} \oplus (\chi_{\mu_0} \boxtimes S_{m-n}),
\]
where 
\begin{itemize}
\item $\mu_i \in \Z + \frac{m-1}{2}$ for $i \ne 0$;
\item $\mu_0 \in \Z + \frac{n}{2}$;
\item $\mu_1 \ge \dots \ge \mu_{i_0-1} > \mu_0 \ge \mu_{i_0} \ge \dots \ge \mu_n$;
\item $n < m$.
\end{itemize}
For such a parameter $\phi'$, we identify $S_{\phi'}$ with a quotient of a free $\Z/2\Z$-module
\[
 \widetilde{S}_{\phi'} = (\Z / 2 \Z) e'_1 \oplus \dots \oplus (\Z / 2 \Z) e'_n \oplus (\Z / 2 \Z) e_0',
\]
where $e'_i$ corresponds to $\chi_{\mu_i}$ (resp.~$\chi_{\mu_0} \boxtimes S_{m-n}$) if $i \ne 0$ (resp.~$i=0$), in such a way that $\widehat{S}_{\phi'}$ consists of the characters $\eta'$ of $\widetilde{S}_{\phi'}$ satisfying
\[
 \eta'(e_i') = \eta'(e_j')
\]
for all $i,j$ such that $\mu_i = \mu_j$ with either $i, j \ne 0$ or $i \ne 0$, $j = 0$, $m-n=1$.

Let $\eta' \in \widehat{S}_{\phi'}$.
For $1 \le i \le n+1$, we define a pair of integers $(r_i, s_i)$ by 
\[
 (r_i, s_i) = 
 \begin{cases}
  (1,0) & \text{if $i<i_0$ and $\eta'(e_i') = (-1)^{i-1}$;} \\
  (0,1) & \text{if $i<i_0$ and $\eta'(e_i') = (-1)^i$;} \\
  (1,0) & \text{if $i>i_0$ and $\eta'(e_{i-1}') = (-1)^{i+m-n-2}$;} \\
  (0,1) & \text{if $i>i_0$ and $\eta'(e_{i-1}') = (-1)^{i+m-n-1}$}
 \end{cases}
\]
and 
\[
 (r_{i_0}, s_{i_0}) = (r - r_1 - \dots - r_{i_0-1} - r_{i_0+1} \dots - r_{n+1}, s - s_1 - \dots - s_{i_0-1} - s_{i_0+1} \dots - s_{n+1}).
\]
Note that $r_{i_0} + s_{i_0} = m-n$.
Then by \cite[Th\'eor\`eme 1.1]{mr19}, the representation $\sigma(\phi', \eta')$ of $\U(r,s)$ is nonzero only if
\[
 r_{i_0}, s_{i_0} \ge 0
\]
and 
\begin{equation}
\label{eq:eta'}
 \eta'(e_1' + \dots + e_n' + e_0') = (-1)^{\frac{1}{2}(r-s)(r-s-1)} 
\end{equation}
(see also \cite[(1-3)]{mr19} and Lemma \ref{l:eta'} below), in which case we have
\[
 \sigma(\phi', \eta') = \nA_\q(\lambda').
\]
Here $\lambda'$ is given by 
\[
 \lambda' = (\underbrace{\lambda'_1,\dots,\lambda'_1}_{r_1}, \dots, \underbrace{\lambda'_{n+1},\dots,\lambda'_{n+1}}_{r_{n+1}}, \underbrace{\lambda'_1,\dots,\lambda'_1}_{s_1}, \dots, \underbrace{\lambda'_{n+1},\dots,\lambda'_{n+1}}_{s_{n+1}})
\]
with 
\[
 \lambda'_i = 
 \begin{cases}
  \mu_i & \text{if $i < i_0$;} \\
  \mu_0 & \text{if $i = i_0$;} \\
  \mu_{i-1} & \text{if $i > i_0$}
 \end{cases}
\]
and $\q = \q(x')$ is associated to
\[
 x' = (\underbrace{x'_1,\dots,x'_1}_{r_1}, \dots, \underbrace{x'_{n+1},\dots,x'_{n+1}}_{r_{n+1}}, \underbrace{x'_1,\dots,x'_1}_{s_1}, \dots, \underbrace{x'_{n+1},\dots,x'_{n+1}}_{s_{n+1}})
\]
for any $x'_1, \dots, x'_{n+1} \in \R$ such that $x'_1 > \dots > x'_{n+1}$, so that $\nA_\q(\lambda')$ is weakly fair.
(Note that there is a typo in \cite[(4-2)]{mr19}: $(t_i+a_i-N)/2 - a_{<i}$ should be $(t_i+a_i-N)/2 + a_{<i}$.)
Moreover, if two representations $\sigma(\phi', \eta_1'), \sigma(\phi', \eta_2')$ with $\eta_1', \eta_2' \in \widehat{S}_{\phi'}$ are nonzero and isomorphic, then we have $\eta_1' = \eta_2'$. 

\begin{lem}
\label{l:eta'}
Let $\eta' \in \widehat{S}_{\phi'}$ and define $(r_i,s_i)$ as above.
Then $\eta'$ satisfies \eqref{eq:eta'} if and only if
\[
 \eta'(e_0') = (-1)^{r_{i_0}(i_0-1) + s_{i_0} i_0 + \frac{1}{2}(m-n)(m-n-1)}.
\]
\end{lem}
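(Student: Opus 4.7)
The plan is to compute $\eta'(e_1' + \dots + e_n')$ directly from the defining recipe for $(r_i, s_i)$, and then read off $\eta'(e_0')$ from the constraint $\eta'(e_1' + \dots + e_n' + e_0') = (-1)^{\frac{1}{2}(r-s)(r-s-1)}$. The key simplification is that for $i \ne i_0$ we have $r_i + s_i = 1$, so the two branches in the definition of $(r_i, s_i)$ unify into a single formula: for $i < i_0$, $\eta'(e_i') = (-1)^{i-1+s_i}$, and for $i > i_0$, $\eta'(e_{i-1}') = (-1)^{i+m-n-2+s_i}$.

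Next, I would multiply these signs together and re-index the second product by $j = i-1$ so both factors are indexed by $j \in \{1,\dots,n\}$. The exponent splits as an arithmetic part plus $\sum_{j \ne i_0} s_j = s - s_{i_0}$. The arithmetic part collapses via $\sum_{j=1}^{i_0-1}(j-1) + \sum_{j=i_0}^n(j-1) = \frac{n(n-1)}{2}$, yielding
\[
 \eta'(e_1' + \dots + e_n') = (-1)^{\frac{n(n-1)}{2} + (n-i_0+1)(m-n) + s - s_{i_0}}.
\]
Dividing the constraint through by this sign shows the claimed formula for $\eta'(e_0')$ is equivalent to the parity identity
\[
 \tfrac{n(n-1)}{2} + (n-i_0+1)(m-n) + s - s_{i_0} + r_{i_0}(i_0-1) + s_{i_0} i_0 + \tfrac{(m-n)(m-n-1)}{2} \equiv \tfrac{(r-s)(r-s-1)}{2} \pmod 2.
\]

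To verify this, I would use $r_{i_0} + s_{i_0} = m-n$ to combine $s_{i_0}(i_0-1) + r_{i_0}(i_0-1) = (m-n)(i_0-1)$, which merges with $(n-i_0+1)(m-n)$ to give $n(m-n)$. The Vandermonde-type identity $\binom{n}{2} + n(m-n) + \binom{m-n}{2} = \binom{m}{2}$ then consolidates the arithmetic terms into $\frac{m(m-1)}{2}$, reducing the claim to
\[
 \tfrac{(r-s)(r-s-1)}{2} \equiv \tfrac{m(m-1)}{2} + s \pmod 2.
\]
This follows from $r + s = m$ and the direct expansion $(m-2s)(m-2s-1) - m(m-1) = 2s(2(s-m)+1)$, whose half is congruent to $s$ modulo $2$.

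The main obstacle is purely bookkeeping: the labels $e_1',\dots,e_n',e_0'$ are indexed differently from the slots $1,\dots,n+1$ carrying the pairs $(r_i,s_i)$ (the slot $i_0$ is ``skipped''), and one must track the parity shift $(-1)^{m-n-2}$ appearing in the $i>i_0$ branch carefully. Once the indexing is set up correctly, everything is a routine parity computation with no deeper content, and the identity $\binom{n}{2} + n(m-n) + \binom{m-n}{2} = \binom{m}{2}$ does essentially all the real work.
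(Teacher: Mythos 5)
Your proof is correct and follows essentially the same route as the paper: compute the exponent of $\eta'(e_1'+\dots+e_n')$ by unifying the two cases in the definition of $(r_i,s_i)$ via $r_i+s_i=1$ for $i\ne i_0$, simplify using $r_{i_0}+s_{i_0}=m-n$ and the identity $\binom{n}{2}+n(m-n)+\binom{m-n}{2}=\binom{m}{2}$ (which the paper applies implicitly in passing from $\tfrac{1}{2}n(2m-n-1)+\tfrac{1}{2}(m-n)(m-n-1)$ to $\tfrac{1}{2}m(m-1)$), and close with the same parity check $\tfrac{1}{2}(r-s)(r-s-1)\equiv\tfrac{1}{2}m(m-1)+s\pmod 2$. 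The only differences are cosmetic rearrangements of the intermediate algebra.
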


\begin{proof}
It suffices to show that 
\[
 \eta'(e_1' + \dots + e_n') \cdot (-1)^{r_{i_0}(i_0-1) + s_{i_0} i_0 + \frac{1}{2}(m-n)(m-n-1)} \cdot (-1)^{\frac{1}{2}(r-s)(r-s-1)} = 1.
\]
We may write $\eta'(e_1' + \dots + e_n') = (-1)^j$, where
\begin{align*}
 j & = \sum_{i=1}^{i_0-1} (i-1+s_i) + \sum_{i=i_0+1}^{n+1} (i+m-n-2+s_i) \\
 & = \frac{1}{2}(i_0-1)(i_0-2) + \frac{1}{2}(n-i_0+1)(2m-n+i_0-2) + s-s_{i_0} \\
 & = \frac{1}{2}n(2m-n-1) - (m-n)(i_0-1) + s-s_{i_0} \\
 & = \frac{1}{2}n(2m-n-1) - r_{i_0}(i_0-1) - s_{i_0} i_0 + s.
\end{align*}
Then we have
\begin{align*}
 & j + r_{i_0}(i_0-1) + s_{i_0} i_0 + \frac{1}{2}(m-n)(m-n-1) + \frac{1}{2}(r-s)(r-s-1) \\
 & = \frac{1}{2}n(2m-n-1) + s + \frac{1}{2}(m-n)(m-n-1) + \frac{1}{2}(r-s)(r-s-1) \\
 & = \frac{1}{2} m (m-1) + s + \frac{1}{2}(r-s)(r-s-1) \\
 & = \frac{1}{2} (r+s) (r+s-1) + s + \frac{1}{2}(r-s)(r-s-1) \\
 & = r(r-1) + s(s+1) \\
 & \equiv 0 \bmod 2.
\end{align*}
This implies the assertion.
\end{proof}

\subsection{The nonarchimedean case}

Suppose that $F$ is nonarchimedean.
Recall that given a positive integer $n$, there are precisely two $n$-dimensional $\varepsilon$-Hermitian spaces $V_n^+$ and $V_n^-$ over $E$ (up to isometry).
Consider a normalized parabolically induced representation
\[
 \Ind^{\U(V_n^+)}_P(\xi_1 \boxtimes \dots \boxtimes \xi_d \boxtimes \pi_0),
\]
where 
\begin{itemize}
\item $d$ is a nonnegative integer with $2d \le n$;
\item $P$ is a parabolic subgroup of $\U(V_n^+)$ with Levi component $(E^\times)^d \times \U(V_{n-2d}^+)$ defined as in \S \ref{ss:parab-ind};
\item $\xi_1, \dots, \xi_d$ are characters of $E^\times$;
\item $\pi_0$ is an irreducible tempered representation of $\U(V_{n-2d}^+)$.
\end{itemize}
If this representation is a standard module, we denote its unique irreducible quotient by
\[
 J(\xi_1, \dots, \xi_d, \pi_0).
\]

\subsubsection{Some $L$-packets}

We consider the $L$-packet $\Pi_\phi(\U(V_n^+))$, where $\phi$ is an $L$-parameter for $\U_n$ of the form
\[
 \phi = \chi_1 \oplus \dots \oplus \chi_n
\]
with (not necessarily distinct) conjugate-selfdual characters $\chi_1, \dots, \chi_n$ of $E^\times$ with sign $(-1)^{n-1}$.
Then $\pi(\phi,\mathbbm{1})$ is an irreducible tempered representation of $\U(V_n^+)$.
For more properties, we refer the reader to \cite[\S 2.5]{gi2}.

\subsubsection{Some $A$-packets}

We consider the $A$-packet $\Pi_{\phi'}(\U(V_m^+))$, where $\phi'$ is an $A$-parameter for $\U_m$ of the form
\[
 \phi' = \chi_1 \oplus \dots \oplus \chi_n \oplus (\chi_0 \boxtimes S_{m-n})
\]
with $n < m$ and (not necessarily distinct) conjugate-selfdual characters $\chi_1, \dots, \chi_n,\chi_0$ of $E^\times$ with sign
\[
\begin{cases}
 (-1)^{m-1} & \text{if $i \ne 0$;} \\
 (-1)^n & \text{if $i=0$.}
\end{cases} 
\]
Then by \cite[\S 4.1]{mr18}, the representation $\sigma(\phi', \eta')$ of $\U(V_m^+)$ with $\eta' \in \widehat{S}_{\phi'}$ is either zero or irreducible.
Moreover, if two representations $\sigma(\phi', \eta_1'), \sigma(\phi', \eta_2')$ with $\eta_1', \eta_2' \in \widehat{S}_{\phi'}$ are nonzero and isomorphic, then we have $\eta_1' = \eta_2'$.

\begin{lem}
\label{l:local-Apacket}
\begin{enumerate}
\item 
If $m \equiv n \bmod 2$, then we have
\[
 \sigma(\phi', \mathbbm{1}) = J(\chi_0 | \cdot |^{\frac{1}{2}(m-n-1)}, \chi_0 | \cdot |^{\frac{1}{2}(m-n-3)}, \dots, \chi_0 | \cdot |^{\frac{1}{2}}, \pi(\phi_0, \mathbbm{1}))
\]
with an $L$-parameter $\phi_0 = \chi_1 \oplus \dots \oplus \chi_n$ for $\U_n$.
\item
If $m \not \equiv n \bmod 2$, then we have
\[
 \sigma(\phi', \mathbbm{1}) = J(\chi_0 | \cdot |^{\frac{1}{2}(m-n-1)}, \chi_0 | \cdot |^{\frac{1}{2}(m-n-3)}, \dots, \chi_0 | \cdot |^1, \pi(\phi_1, \mathbbm{1}))
\]
with an $L$-parameter $\phi_1 = \chi_1 \oplus \dots \oplus \chi_n \oplus \chi_0$ for $\U_{n+1}$.
\end{enumerate}
\end{lem}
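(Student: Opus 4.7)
The plan is to identify $\sigma(\phi', \mathbbm{1})$ through the explicit construction of $A$-packets for unitary groups due to M{\oe}glin--Renard \cite{mr18}, already cited in the setup above. First, I would form the infinitesimal-character $L$-parameter
\[
 \phi'_\phi(w) = \phi'\bigl(w, \diag(|w|^{1/2}, |w|^{-1/2})\bigr) = \chi_1 \oplus \dots \oplus \chi_n \oplus \bigoplus_{j=0}^{m-n-1} \chi_0 |\cdot|^{(m-n-1)/2 - j}
\]
and then rewrite it in Langlands form by isolating the summands of positive real exponent.

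Next I would split according to the parity of $m-n$. When $m \equiv n \pmod{2}$, the positive exponents are $\tfrac{m-n-1}{2}, \tfrac{m-n-3}{2}, \dots, \tfrac{1}{2}$, and each such summand $\chi_0|\cdot|^k$ is paired with its Hermitian dual $\chi_0|\cdot|^{-k}$ appearing elsewhere in $\phi'_\phi$, so that the ``tempered middle'' of $\phi'_\phi$ is exactly $\chi_1 \oplus \cdots \oplus \chi_n = \phi_0$. When $m \not\equiv n \pmod{2}$, the positive exponents instead terminate at $1$, and the summand $\chi_0|\cdot|^0 = \chi_0$, which is conjugate-selfdual with sign $(-1)^n = (-1)^{(n+1)-1}$ and hence a valid summand of an $L$-parameter for $\U_{n+1}$, joins the middle to yield $\phi_1 = \chi_1 \oplus \cdots \oplus \chi_n \oplus \chi_0$. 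In both cases the resulting standard module is
\[
 \Ind\bigl(\chi_0|\cdot|^{(m-n-1)/2} \otimes \cdots \otimes \chi_0|\cdot|^{1/2 \text{ or } 1} \otimes \pi(\phi_0 \text{ or } \phi_1, \mathbbm{1})\bigr),
\]
whose Langlands quotient is precisely $J(\chi_0|\cdot|^{(m-n-1)/2}, \dots, \pi(\phi_0, \mathbbm{1}))$ in case (i) and $J(\chi_0|\cdot|^{(m-n-1)/2}, \dots, \pi(\phi_1, \mathbbm{1}))$ in case (ii).

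The final step is to invoke the explicit construction of \cite[\S 4.1]{mr18}, which realizes the member of the $A$-packet $\Pi_{\phi'}(\U(V_m^+))$ parametrized by the trivial character on $S_{\phi'}$ as exactly this Langlands quotient on the quasi-split pure inner form; this yields the claimed formulas. The hard part will be the normalization check: one must verify that the character on $S_{\phi'}$ that M{\oe}glin--Renard attach to the Langlands quotient really coincides with $\mathbbm{1}$ under the Whittaker datum fixed in \S \ref{ss:whit}. This compatibility is exactly the content of the last paragraph of \S \ref{ss:whit}, where our Whittaker datum is matched with that of \cite[Remarque 4.5]{mr19}, so once this normalization is tracked carefully the identification is automatic.
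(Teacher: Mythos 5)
Your preparatory steps are correct: the $L$-parameter $\phi'_\phi$ obtained from $\phi'$ by restricting along the diagonal $\SL_2$ is computed accurately, and your case split by the parity of $m-n$---with $\chi_0$ joining the tempered middle when $m-n$ is odd---produces exactly the Langlands quotients named in the lemma. The gap is in the final step. You appeal to \cite[\S 4.1]{mr18} to conclude that the trivial-character member of the $A$-packet is this Langlands quotient, but that reference (as recalled in the paragraph just before the lemma) supplies only the irreducibility/vanishing dichotomy for packet members and the injectivity of the parametrization on the nonzero ones; it does not by itself identify $\sigma(\phi', \mathbbm{1})$ with the Langlands quotient of the standard module attached to $\phi'_\phi$. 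That identification is the content of Mok's Proposition 8.4.1 \cite{mok}, which is what the paper's one-line proof actually cites: Mok's proposition relates $\sigma(\phi', \mathbbm{1})$ to the standard module, and combined with the irreducibility from \cite{mr18} it forces $\sigma(\phi', \mathbbm{1}) = J(\cdots)$. Without this input your argument has a genuine gap. Your final paragraph on the Whittaker normalization is a reasonable instinct, but \cite[Remarque 4.5]{mr19} concerns the archimedean setting, whereas this lemma lives in the nonarchimedean one; more to the point, once Mok's Proposition 8.4.1 is invoked the normalization is already built into that statement and needs no separate matching.
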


\begin{proof}
The assertion follows from \cite[Proposition 8.4.1]{mok} and the irreducibility of $\sigma(\phi', \mathbbm{1})$.
\end{proof}

\subsection{The split case}

We also need to consider the case when $F$ is nonarchimedean and $E = F \times F$.
Recall that given a positive integer $n$, there is a unique $n$-dimensional $\varepsilon$-Hermitian space $V_n^+ = \VV_n \otimes_F E$ over $E$ (up to isometry), where $\VV_n$ is an $n$-dimensional vector space over $F$.
Via the isomorphism $\U(V_n^+) \cong \GL(\VV_n)$ induced by the first projection, we may regard an $L$-parameter $\phi : L_F \rightarrow {}^L \U_n$ (resp.~an $A$-parameter $\phi : L_F \times \SL_2(\C) \rightarrow {}^L \U_n$) for $\U_n$ as an $n$-dimensional representation of $L_F$ (resp.~$L_F \times \SL_2(\C)$).
For any such a parameter $\phi$, the component group $S_\phi$ is always trivial.

Let $\phi$ and $\phi'$ be $L$- and $A$-parameters for $\U_n$ and $\U_m$, respectively, of the form
\begin{align*}
 \phi & = \chi_1 \oplus \dots \oplus \chi_n, \\
 \phi' & = \chi_1 \oplus \dots \oplus \chi_n \oplus (\chi_0 \boxtimes S_{m-n})
\end{align*}
with $n < m$ and (not necessarily distinct) unitary characters $\chi_1, \dots, \chi_n, \chi_0$ of $F^\times$.
We denote by $\pi(\phi, \mathbbm{1})$ and $\sigma(\phi', \mathbbm{1})$ the unique representations of $\U(V_n^+) \cong \GL(\VV_n)$ and $\U(V_m^+) \cong \GL(\VV_m)$ in the $L$- and $A$-packets $\Pi_\phi(\U(V_n^+))$ and $\Pi_{\phi'}(\U(V_m^+))$, respectively.
Then we have
\begin{align*}
 \pi(\phi, \mathbbm{1}) & = \Ind^{\GL(\VV_n)}_{\mathcal{B}}(\chi_1 \boxtimes \dots \boxtimes \chi_n), \\
 \sigma(\phi', \mathbbm{1}) & = \Ind^{\GL(\VV_m)}_{\mathcal{P}}(\chi_1 \boxtimes \dots \boxtimes \chi_n \boxtimes (\chi_0 \circ \det)), 
\end{align*}
where $\mathcal{B}$ is a Borel subgroup of $\GL(\VV_n)$ and $\mathcal{P}$ is a parabolic subgroup of $\GL(\VV_m)$ with Levi component $(F^\times)^n \times \GL_{m-n}(F)$.
Note that the parabolically induced representations on the right-hand side are irreducible by \cite[Theorem 4.2]{zel}.

\section{Nonvanishing of theta lifts}
\label{s:atobe}

In this section, we review a criterion for the nonvanishing of theta lifts due to Atobe \cite{atobe}.

\subsection{Some invariants}
\label{ss:atobe-def}

Let $W$ be an $n$-dimensional skew-Hermitian space over $\C$.
Fix $k_0 = -1$ or $0$.
We consider the theta lifting from $\U(W)$ to $\U(V)$, where $V$ varies over $m$-dimensional Hermitian spaces over $\C$ with
\[
 m \equiv n+k_0 \bmod 2.
\]
Fix an integer $m_0$ with $m_0 \equiv n+k_0 \bmod 2$ and take the character $\chi_V$ of $\C^\times$ given by 
\[
 \chi_V(z) = \bigg( \frac{z}{\sqrt{z \bar{z}}} \bigg)^{m_0}.
\]
Let $\pi$ be an irreducible tempered representation of $\U(W)$.
Following \cite[\S 4.1]{atobe}, we define some invariants of $\pi$ (which depend on $k_0$ and $\chi_V$) as follows.

Write $\pi = \pi(\phi, \eta)$, where $\phi$ is a tempered $L$-parameter for $\U_n$ and $\eta$ is a character of $S_\phi$.
We may write $\phi$ as
\[
 \phi = (m_1 \chi_{\kappa_1} \oplus \dots \oplus m_a \chi_{\kappa_a} \oplus n_1 \chi_{\mu_1} \oplus \dots \oplus n_b \chi_{\mu_b} \oplus \xi_1 \oplus \dots \oplus \xi_d \oplus \check{\xi}_1 \oplus \dots \oplus \check{\xi}_d) \otimes \chi_V,
\]
where 
\begin{itemize}
\item $\kappa_i, \mu_j \in \Z + \frac{k_0-1}{2}$;
\item $\kappa_i \ne \mu_j$ for all $i,j$;
\item $\kappa_1 > \dots > \kappa_a$ and $\mu_1 > \dots > \mu_b$;
\item $\xi_i$ is a unitary character of $\C^\times$ which is not conjugate-selfdual with sign $(-1)^{n-1}$;
\item $m_i$ and $n_j$ are odd and even positive integers, respectively;
\item $m_1 + \dots + m_a + n_1 + \dots + n_b + 2d = n$.
\end{itemize}
Then $S_\phi$ is a free $\Z/2\Z$-module of the form
\[
 S_\phi = (\Z/2\Z) \tilde{e}_{\kappa_1} \oplus \dots \oplus (\Z/2\Z) \tilde{e}_{\kappa_a} \oplus (\Z/2\Z) \tilde{e}_{\mu_1} \oplus \dots \oplus (\Z/2\Z) \tilde{e}_{\mu_b},
\]
where $\tilde{e}_{\kappa_i}$ and $\tilde{e}_{\mu_j}$ correspond to $\chi_{\kappa_i} \chi_V$ and $\chi_{\mu_j} \chi_V$, respectively.
Put
\[
 \epsilon_{\kappa_i} = \eta(\tilde{e}_{\kappa_i}), \quad
 \epsilon_{\mu_j} = \eta(\tilde{e}_{\mu_j}).
\]
\begin{enumerate}
\item
Let $k_\pi$ be the largest positive integer such that 
\begin{itemize}
\item $k_\pi \equiv k_0 \bmod 2$;
\item $\{ \frac{k_\pi-1}{2}, \frac{k_\pi-3}{2}, \dots, -\frac{k_\pi-1}{2} \} \subset \{ \kappa_1, \dots, \kappa_a \}$;
\item $\epsilon_{\frac{k_\pi+1}{2} - i} \ne \epsilon_{\frac{k_\pi-1}{2} - i}$ for all $1 \le i < k_\pi$.
\end{itemize}
If such an integer does not exist, we put $k_\pi = k_0$.
\item
Put
\begin{align*}
 r_\pi & = \# \left\{ 1 \le i \le a \, \left| \, |\kappa_i| \ge \frac{k_\pi+1}{2}, (-1)^{i-1} \epsilon_{\kappa_i} \kappa_i > 0 \right. \right\} + \frac{n-a}{2}, \\
 s_\pi & = \# \left\{ 1 \le i \le a \, \left| \, |\kappa_i| \ge \frac{k_\pi+1}{2}, (-1)^{i-1} \epsilon_{\kappa_i} \kappa_i < 0 \right. \right\} + \frac{n-a}{2}.
\end{align*}
\item 
Define a finite subset $\XX_\pi$ of $\frac{1}{2} \Z \times \{ \pm 1 \}$ by
\[
 \XX_\pi = \{ (\kappa_i, (-1)^{i-1} \epsilon_{\kappa_i}) \, | \, 1 \le i \le a \} \cup \{ (\mu_j, +1), (\mu_j, -1) \, | \, 1 \le j \le b, \, \epsilon_{\mu_j} \ne (-1)^{c_j} \},
\]
where $c_j = \# \{ 1 \le i \le a \, | \, \kappa_i > \mu_j \}$.
\item
Define a sequence
\[
 \XX_\pi = \XX_\pi^{(0)} \supset \XX_\pi^{(1)} \supset \dots \supset \XX_\pi^{(j)} \supset \cdots
\]
inductively as follows.
Write the image of $\XX_\pi^{(j)}$ under the projection $\frac{1}{2} \Z \times \{ \pm 1 \} \rightarrow \frac{1}{2} \Z$ as
\[
 \{ \nu_1, \nu_2, \dots \}
\]
with $\nu_1 > \nu_2 > \cdots$ and define a subset $\XX_\pi^{(j+1)}$ of $\XX_\pi^{(j)}$ by 
\[
 \XX_\pi^{(j+1)} = \XX_\pi^{(j)} \smallsetminus
 \bigg( \bigcup_i \{ (\nu_i, +1), (\nu_{i+1}, -1) \} \bigg), 
\]
where $i$ runs over indices such that 
\begin{itemize}
\item $(\nu_i, +1), (\nu_{i+1},-1) \in \XX_\pi^{(j)}$;
\item $\min \{ |\nu_i|, |\nu_{i+1}| \} \ge \frac{k_\pi+1}{2}$;
\item $\nu_i \nu_{i+1} \ge 0$.
\end{itemize}
\item 
Put
\[
 \XX_\pi^{(\infty)} = \XX_\pi^{(n)} = \XX_\pi^{(n+1)} = \cdots.
\]
\item 
For any integer $x$, we define subsets $\CC_\pi^\pm(x)$ of $\XX_\pi^{(\infty)}$ by
\begin{align*}
 \CC_\pi^+(x) & = \left\{ (\nu,+1) \in \XX_\pi^{(\infty)} \, \left| \, 0 \le \frac{k_\pi-1}{2} + \nu < x \right. \right\}, \\ 
 \CC_\pi^-(x) & = \left\{ (\nu,-1) \in \XX_\pi^{(\infty)} \, \left| \, 0 \le \frac{k_\pi-1}{2} - \nu < x \right. \right\}.
\end{align*}
\end{enumerate}

\subsection{A result of Atobe}

We have the following criterion for the nonvanishing of $\theta_{r,s}(\pi)$ due to Atobe \cite[Theorem 4.2]{atobe}, which relies on the Gan--Gross--Prasad conjecture \cite{ggp1} for real unitary groups but is now unconditional thanks to a recent result of Xue \cite{xue}.
Note that the choice of $\psi$ in \cite[\S 3.3]{atobe} is not made explicit, but it agrees with our choice (see \cite[Lemma 1.4.5]{paul1} and Lemma \ref{l:K-type-corresp} below).
Note also that \cite[Theorem 4.2(1)]{atobe} is not correct as stated, but the argument in \cite[\S 5]{atobe} in fact proves:

\begin{thm}[Atobe]
\label{t:atobe}
Let $\pi$ be an irreducible tempered representation of $\U(W)$.
Let $l,t$ be integers with $t \ge 1$.
\begin{enumerate}
\item 
Assume that $k_\pi = -1$.
(In particular, we have either $(0, +1), (0, -1) \notin \XX_\pi$ or $(0, +1), (0, -1) \in \XX_\pi$.)
Then
\begin{itemize}
\item 
$\theta_{r_\pi + l + 2t + 1, s_\pi + l}(\pi)$ is nonzero if and only if 
\[
\left\{ 
\begin{aligned}
 l & \ge 0, \, &
 \# \CC_\pi^+(l+t) & \le l, \, &
 \# \CC_\pi^-(l+t) & \le l \, &
 \text{if $(0, +1), (0, -1) \notin \XX_\pi$;} \\
 l & \ge 1, \, &
 \# \CC_\pi^+(l+t) & \le l-1, \, &
 \# \CC_\pi^-(l+t) & \le l-1 \, &
 \text{if $(0, +1), (0, -1) \in \XX_\pi$;}
\end{aligned}
\right.
\]
\item 
$\theta_{r_\pi + l + 1, s_\pi + l}(\pi)$ is nonzero if and only if
\[
\begin{cases}
 l \ge 0 & \text{if $0 \notin \{ \mu_1, \dots, \mu_b \}$;} \\
 l \ge -1 & \text{if $0 \in \{ \mu_1, \dots, \mu_b \}$ and $(0, +1), (0, -1) \notin \XX_\pi$;} \\
 l \ge 1 & \text{if $0 \in \{ \mu_1, \dots, \mu_b \}$ and $(0,+1), (0,-1) \in \XX_\pi$.}
\end{cases}
\]
\end{itemize}
\item 
Assume that $k_\pi \ge 0$.
Then
\begin{itemize}
\item 
$\theta_{r_\pi + l + 2t, s_\pi + l}(\pi)$ is nonzero if and only if
\[
 l \ge k_\pi, \quad 
 \# \CC_\pi^+(l+t) \le l, \quad
 \# \CC_\pi^-(l+t) \le l;
\]
\item 
$\theta_{r_\pi + l, s_\pi + l}(\pi)$ is nonzero if and only if 
\[
\begin{cases}
 l \ge -1 & \text{if the conditions \eqref{atobe-1}, \eqref{atobe-2}, \eqref{atobe-3} below hold;} \\
 l \ge 0 & \text{otherwise,}
\end{cases}
\]
where 
\begin{enumerate}
\item
\label{atobe-1}
$\{ \frac{k_\pi+1}{2}, -\frac{k_\pi+1}{2} \} \subset \{ \kappa_1, \dots, \kappa_a, \mu_1, \dots, \mu_b \}$;
\item 
\label{atobe-2}
$\{ \frac{k_\pi+1}{2}, -\frac{k_\pi+1}{2} \} \cap \{ \mu_1, \dots, \mu_b \} \ne \varnothing$;
\item
\label{atobe-3}
$\epsilon_{\frac{k_\pi+1}{2} - i} \ne \epsilon_{\frac{k_\pi-1}{2} - i}$ for all $0 \le i \le k_\pi$.
\end{enumerate}
\end{itemize}
\end{enumerate}
\end{thm}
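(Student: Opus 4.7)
The overall strategy, following Atobe, is induction along the two Witt towers of Hermitian spaces compatible with $\pi$, combined with the Gan--Gross--Prasad conjecture (Xue's theorem \cite{xue}) to descend, and with the explicit $L$- and $A$-packet descriptions of \S 5 to ascend. My plan is to proceed as follows.

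First, I would exploit the conservation principle: on each Witt tower (of fixed parity of $r-s$) there is a first occurrence index $l_0$ such that $\theta_{r,s}(\pi)$ is nonzero precisely for $l \ge l_0$. Each of the four cases in the theorem should be reinterpreted as (a) a lower bound $l \ge l_0$ identifying the first occurrence, together with (b) for the ``big'' towers, the inequalities $\# \CC_\pi^\pm(l+t) \le l$, which encode compatibility between the tempered $L$-parameter of $\pi$ and the would-be $A$-parameter of $\theta_{r,s}(\pi)$. Condition (b) can be extracted as a necessary condition by comparing the first occurrence with the explicit theta correspondence of packets (M\oe glin--Renard \cite{mr19} combined with the description in \S\ref{sss:packets-real-coh}): a parameter $\sigma(\phi',\eta')$ exists as an irreducible representation of $\U(r,s)$ only when the nonnegativity condition $r_{i_0}, s_{i_0} \ge 0$ of \S\ref{sss:packets-real-coh} holds, and this nonnegativity translates exactly into the $\CC_\pi^\pm$ inequalities via the cancellation procedure (4) in \S\ref{ss:atobe-def}. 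Establishing (a) is the bulk of the work.

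For the ``only if'' direction of (a) I would follow Atobe's inductive step verbatim: assuming $\theta_{r,s}(\pi) \ne 0$, a seesaw for the pair $(\U(p+1,q) \times \U(p,q), \U(r,s))$ together with Xue's theorem produces a tempered $\pi'$ of $\U(p+1,q)$ with $\theta_{r,s}(\pi') \ne 0$ and $\Hom_{\U(p,q)}(\pi', \pi) \ne 0$; the GGP branching law then pins down the $L$-parameter and character of $\pi'$ in terms of those of $\pi$, and in particular tracks how $(k_\bullet, r_\bullet, s_\bullet, \XX_\bullet^{(\infty)})$ changes in one induction step. Iterating collapses $\XX_\pi^{(\infty)}$ by precisely the cancellation rule in Step (4) of \S\ref{ss:atobe-def}, reducing to a base case handled by direct inspection. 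The converse is obtained by reversing the induction: starting from the first occurrence, one constructs $\theta_{r,s}(\pi)$ explicitly and propagates nonvanishing up the tower using the conservation principle and the nonvanishing of appropriate normalized local intertwining operators.

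The main obstacle, and the reason for restating the theorem rather than citing \cite[Theorem 4.2]{atobe} verbatim, is the delicate bookkeeping at $\nu = 0$: when $(0,+1), (0,-1) \in \XX_\pi$, or when $0 \in \{\mu_1, \dots, \mu_b\}$, extra pairs cancel and the first occurrence shifts by one, which modifies the effective lower bound on $l$. This is precisely the situation in which part (1) of the cited theorem is mis-stated. The plan is to isolate these $\nu = 0$ contributions inside Atobe's inductive step, re-derive the corrected ranges of $l$ displayed in (1), and then verify that the remaining inequalities $\# \CC_\pi^\pm(l+t) \le l - 1$ are forced once the cancellation at $\nu = 0$ is properly accounted for. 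The rest of the argument of \cite[\S 5]{atobe} then goes through unchanged.
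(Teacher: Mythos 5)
Your plan is essentially the same as the paper's: the paper does not reprove this statement but simply cites Atobe, noting that the statement of \cite[Theorem~4.2(1)]{atobe} is wrong but that the inductive argument of \cite[\S 5]{atobe} (seesaw for the pair $(\U(p+1,q)\times\U(p,q),\U(r,s))$ together with Xue's proof of the tempered GGP conjecture and the descent on $\dim W$) actually establishes the corrected version quoted here. You correctly identify the source of the correction as the bookkeeping at $\nu=0$, i.e.\ the cases $(0,+1),(0,-1)\in\XX_\pi$ and $0\in\{\mu_1,\dots,\mu_b\}$, which is exactly what necessitates replacing Atobe's stated part (1) by the version in the theorem.

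One step in your outline, however, would introduce a circularity and should be discarded. You propose to extract the inequalities $\#\CC_\pi^\pm(l+t)\le l$ as a necessary condition ``by comparing the first occurrence with the explicit theta correspondence of packets,'' invoking the nonvanishing criterion $r_{i_0},s_{i_0}\ge 0$ for $\sigma(\phi',\eta')$ from M\oe glin--Renard and \S\ref{sss:packets-real-coh}. But the identification of $\theta_{r,s}(\pi)$ with a specific member $\sigma(\phi',\eta')$ of an $A$-packet is precisely the content of Theorem~\ref{t:main-lds}, whose proof relies on Theorem~\ref{t:atobe} (via Corollaries~\ref{c:jacquet}--\ref{c:going-up}, Lemma~\ref{l:induction-disc}, and the nonvanishing of the global lift). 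In Atobe's argument the $\CC_\pi^\pm$ inequalities arise internally from counting how many induction steps are needed to cancel elements of $\XX_\pi^{(\infty)}$; no appeal to $A$-packets of the target group is made, and none can be made at this stage. Once you replace that justification with the purely combinatorial tracking of $\XX_\pi^{(\infty)}$ along the GGP descent (as you yourself describe in the subsequent paragraph), the remainder of the outline is sound and coincides with the argument the paper invokes. A minor slip: a Witt tower over $\R$ is determined by the integer $r-s$, not merely by its parity; different values of $t$ correspond to different towers.
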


\begin{rem}
\label{r:atobe}
To determine the nonvanishing of $\theta_{r,s}(\pi)$ with $r+s=m$, we may assume that 
\[
 r-r_\pi \ge s-s_\pi
\]
by replacing $(r,s)$ by $(s,r)$ and $\pi$ by $\tilde{\pi} = \bar{\pi} \otimes (\chi_V \circ \det)$ if necessary.
Indeed, we have $\theta_{r,s}(\pi) \ne 0$ if and only if $\theta_{s,r}(\tilde{\pi}) \ne 0$ by \cite[Proposition 3.9]{atobe}, while we have
\[
 k_{\tilde{\pi}} = k_\pi, \quad 
 (r_{\tilde{\pi}}, s_{\tilde{\pi}}) = (s_\pi, r_\pi)
\]
by \cite[Lemma 4.4]{atobe}.
If $r-r_\pi \ge s-s_\pi$, then since $m - n \equiv k_\pi \bmod 2$ and
\[
 n = 
 \begin{cases}
  r_\pi + s_\pi & \text{if $k_\pi = -1$;} \\
  r_\pi + s_\pi + k_\pi  & \text{if $k_\pi \ge 0$,}
 \end{cases}
\]
we have
\[
 (r-r_\pi) - (s-s_\pi) = 
 \begin{cases}
  2t+1 & \text{if $k_\pi = -1$;} \\
  2t & \text{if $k_\pi \ge 0$}
 \end{cases}
\]
for some nonnegative integer $t$.
Thus Theorem \ref{t:atobe} completely determines the nonvanishing of $\theta_{r,s}(\pi)$.
\end{rem}

\subsection{Some corollaries}

In this subsection, we state some corollaries of Theorem \ref{t:atobe} which will be used later.
We consider the theta lifting from $\U(p,q)$ to $\U(r,s)$ with $p+q=n$ and $r+s=m$.

\begin{cor}
\label{c:jacquet}
Assume that $p,q > 0$.
Let $\pi$ be an irreducible tempered representation of $\U(p,q)$ such that $\pi \subset I(\chi, \pi_0)$, where
\begin{itemize}
\item $\chi$ is a unitary character of $\C^\times$;
\item $\pi_0$ is an irreducible tempered representation of $\U(p-1,q-1)$.
\end{itemize}
Assume further that $\theta_{r,s}(\pi) \ne 0$ and that either $m>n$ or $\chi$ satisfies one of the following conditions (with the notation of \S \ref{ss:atobe-def}):
\begin{itemize}
\item $\chi = \chi_{\kappa_i}$ for some $i$ such that $m_i \ge 3$;
\item $\chi = \chi_{\mu_j}$ for some $j$ such that $n_j \ge 4$;
\item $\chi$ is not conjugate-selfdual with sign $(-1)^{n-1}$.
\end{itemize}
Then we have $r,s > 0$ and $\theta_{r-1,s-1}(\pi_0) \ne 0$.
\end{cor}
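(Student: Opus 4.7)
The strategy is to apply Atobe's criterion (Theorem \ref{t:atobe}) to both $\pi$ and $\pi_0$, and to show that the combinatorial invariants of $\pi_0$ arise from those of $\pi$ by the shifts $(r_{\pi_0}, s_{\pi_0}) = (r_\pi - 1, s_\pi - 1)$, with $k_\pi$, $\XX_\pi^{(\infty)}$, and $\CC_\pi^\pm$ unchanged. Once this is established, the nonvanishing criterion of Theorem \ref{t:atobe} transports verbatim from $\theta_{r, s}(\pi)$ to $\theta_{r-1, s-1}(\pi_0)$, since both criteria will involve the same parameters $(l, t)$ and the same combinatorial data.

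First I would compare the tempered $L$-parameters. Since $\pi \subset I(\chi, \pi_0)$ with both $\pi$ and $\pi_0$ tempered, the description of tempered $L$-packets in \S \ref{sss:packets-real-lds} gives $\phi = \phi_0 \oplus 2\chi$ when $\chi$ is conjugate-selfdual with sign $(-1)^{n-1}$, and $\phi = \phi_0 \oplus \chi \oplus \check{\chi}$ otherwise; moreover, the character $\eta \in \widehat{S}_\phi$ restricts to $\eta_0 \in \widehat{S}_{\phi_0}$ under the natural embedding $S_{\phi_0} \hookrightarrow S_\phi$. Under any one of the three alternative hypotheses on $\chi$ (not conjugate-selfdual with sign $(-1)^{n-1}$, or $\chi = \chi_{\kappa_i}\chi_V$ with $m_i \ge 3$, or $\chi = \chi_{\mu_j}\chi_V$ with $n_j \ge 4$), the passage from $\phi$ to $\phi_0$ does not alter the sets of distinct characters $\{\kappa_1, \dots, \kappa_a\}$ and $\{\mu_1, \dots, \mu_b\}$, nor the parity of any multiplicity $m_i$ or $n_j$, nor the signs $\epsilon_{\kappa_i}, \epsilon_{\mu_j}$. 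Consequently $k_{\pi_0} = k_\pi$, $\XX_{\pi_0} = \XX_\pi$, and $\CC_{\pi_0}^\pm(x) = \CC_\pi^\pm(x)$ for all $x$, while $n$ drops by $2$ and $a$ is unchanged, so the term $(n-a)/2$ in the formulas of \S \ref{ss:atobe-def} drops by $1$, giving $(r_{\pi_0}, s_{\pi_0}) = (r_\pi - 1, s_\pi - 1)$. Using Remark \ref{r:atobe} to reduce to $r - r_\pi \ge s - s_\pi$, the nonvanishing criteria of Theorem \ref{t:atobe} for $\theta_{r, s}(\pi)$ and $\theta_{r-1, s-1}(\pi_0)$ involve the same $(l, t)$ and the same combinatorial conditions, and are therefore equivalent. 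The positivity $r, s \ge 1$ then follows from $r_\pi, s_\pi \ge 1$ (which in turn comes from $r_{\pi_0}, s_{\pi_0} \ge 0$) together with the allowed values of $l$ in each case of the theorem.

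The main obstacle is the remaining case in which $m > n$ but $\chi$ satisfies none of the three multiplicity-type conditions, so that $\chi = \chi_{\kappa_i}\chi_V$ with $m_i = 1$ or $\chi = \chi_{\mu_j}\chi_V$ with $n_j = 2$. In this situation the character is removed outright from the set of $\kappa_i$'s or $\mu_j$'s when passing from $\phi$ to $\phi_0$, and the invariants $k$, $\XX^{(\infty)}$, and $\CC^\pm$ can shift in nontrivial ways (for instance, $k_{\pi_0}$ may differ from $k_\pi$ by $\pm 2$ if the removed character lies at the end of the chain $\{\frac{k_\pi-1}{2}, \dots, -\frac{k_\pi-1}{2}\}$). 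The plan for this case is to exploit the extra slack provided by $m - n \ge 1$: the parameter $t$ in Atobe's criterion for $\theta_{r, s}(\pi) \ne 0$ is typically large enough to absorb the discrepancy in $k$, and a case-by-case inspection of the reduction $\XX_\pi \rightsquigarrow \XX_\pi^{(\infty)}$ shows that the inequalities $\#\CC_{\pi_0}^\pm(l' + t') \le l'$ still hold for the corresponding shifted parameters $(l', t')$ associated with $\theta_{r-1, s-1}(\pi_0)$. This combinatorial case analysis is the most delicate part of the argument.
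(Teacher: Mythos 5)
The portion of your argument covering the three explicit conditions on $\chi$ is essentially the paper's argument: you observe that passing from $\phi$ to $\phi_0$ removes a pair $\{\chi,\check\chi\}$ (or drops a multiplicity $m_i$ or $n_j$ by $2$), which under those hypotheses leaves $k_\pi$, $\XX_\pi$, and $\CC_\pi^\pm$ unchanged while shifting $(r_\pi, s_\pi)$ down to $(r_\pi-1, s_\pi-1)$, so the two instances of Theorem \ref{t:atobe} are literally the same criterion in the same $(l,t)$. That is exactly the paper's reasoning for that case.

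There are two genuine problems, however. First, for the case $m > n$ (in which $\chi$ need not satisfy any of the three conditions) the paper simply invokes \cite[Corollary 4.5]{atobe}, which proves precisely this statement when $m > n$. You instead attempt to re-derive it by a combinatorial inspection of $\XX_\pi \rightsquigarrow \XX_\pi^{(\infty)}$, and you explicitly leave this as an unresolved ``delicate'' case analysis. As written this is an unfilled gap: your reduction step (equality of invariants for $\pi$ and $\pi_0$) genuinely fails when $m_i=1$ or $n_j=2$, and you have not shown that the extra slack from $m - n \ge 1$ closes it. The clean move is to cite Atobe's result rather than re-prove it.

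Second, your positivity argument is short of the mark. You claim $r_\pi, s_\pi \ge 1$ (from $r_{\pi_0}, s_{\pi_0} \ge 0$) suffices together with the allowed range of $l$. But when $0 \in \{\mu_1,\dots,\mu_b\}$ (i.e.\ $b>0$), Theorem \ref{t:atobe} permits $l = -1$, so one only gets $s \ge s_\pi - 1 \ge 0$, which does not rule out $s = 0$. What the paper uses is the sharper bound $r_\pi, s_\pi \ge \sum_i \frac{m_i-1}{2} + \sum_j \frac{n_j}{2} + d$: under any one of the three hypotheses on $\chi$ this sum is at least $1$, and is at least $2$ whenever $b>0$, which is exactly what closes the case $l=-1$. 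Your argument as stated does not recover this.
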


\begin{proof}
If $m>n$, then the assertion was proved in \cite[Corollary 4.5]{atobe}.
Thus we assume that $\chi$ satisfies one of the conditions above.
We may assume that $r-r_\pi \ge s-s_\pi$.
By Theorem \ref{t:atobe}, we have
\[
\left\{ 
\begin{aligned}
 r & \ge r_\pi -1, \, & s & \ge s_\pi -1 \, & \text{if $b > 0$;} \\
 r & \ge r_\pi, \, & s & \ge s_\pi \, & \text{if $b = 0$.}
\end{aligned}
\right.
\]
On the other hand, we have
\[
 r_\pi, s_\pi \ge \sum_{i=1}^a \frac{m_i - 1}{2} + \sum_{j=1}^b \frac{n_j}{2} + d.
\]
From this and the assumption on $\chi$, we can deduce that $r, s > 0$.
Moreover, we have
\[
 r_{\pi_0} = r_{\pi} -1, \quad
 s_{\pi_0} = s_{\pi} -1, \quad 
 \XX_{\pi_0} = \XX_\pi,  \quad
 \CC^\pm_{\pi_0}(x) = \CC^\pm_\pi(x).
\]
Hence by Theorem \ref{t:atobe}, we have $\theta_{r-1,s-1}(\pi_0) \ne 0$.
This completes the proof.
\end{proof}

\begin{cor}
\label{c:local-theta-xyzw}
Let $\pi$ be a (limit of) discrete series representation of $\U(p,q)$ and write $\pi = \nA_\b(\lambda)$ as in \eqref{eq:lds} with 
\[
 \lambda = (\alpha_1, \dots, \alpha_{p^+}, \beta_1, \dots, \beta_{p^-}, \gamma_1, \dots, \gamma_{q^+}, \delta_1, \dots, \delta_{q^-}) + \bigg( \frac{m_0}{2}, \dots, \frac{m_0}{2} \bigg),
\]
where
\begin{itemize}
\item $\alpha_i, \gamma_j > 0$ and $\beta_i, \delta_j \le 0$;
\item $p^++p^- = p$ and $q^++q^- = q$.
\end{itemize}
Assume that $m \ge n$ and $\theta_{r,s}(\pi) \ne 0$.
Then we have
\[
 p^++q^- \le r, \quad
 p^-+q^+ \le s.
\]
\end{cor}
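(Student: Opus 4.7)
The plan is to apply Atobe's nonvanishing criterion (Theorem \ref{t:atobe}) directly, after translating the structure of the (limit of) discrete series $\pi = \nA_\b(\lambda)$ into the invariants $r_\pi$, $s_\pi$, $k_\pi$, $\CC_\pi^\pm$ of \S\ref{ss:atobe-def}. Using the parametrization of (limit of) discrete series $L$-packets in \S\ref{sss:packets-real-lds}, each distinct entry of $\lambda - (m_0/2, \dots, m_0/2)$ corresponds to a character $\chi_{\kappa^A}$ in the $L$-parameter (shifted by $\chi_V^{-1}$) with multiplicity $p_j + q_j$ for the corresponding $(p_j, q_j)$ in \eqref{eq:lambda}; odd multiplicities contribute to Atobe's $\kappa_i$'s and even multiplicities to his $\mu_j$'s, with the signs $\epsilon$ determined by the top/bottom pattern.

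By Remark \ref{r:atobe}, I may assume $r - r_\pi \ge s - s_\pi$ after possibly passing to $\tilde\pi$ and swapping $(r,s)$; by \cite[Lemma 4.4]{atobe}, the inequality $p^- + q^+ \le s$ will then follow from $p^+ + q^- \le r$ applied to $\tilde\pi$, so it suffices to prove $r \ge p^+ + q^-$. I would reinterpret $p^+ + q^-$ as the number of \emph{matched} slots of $\lambda$ (those that are top-positive or bottom-negative), and show that this count decomposes into a contribution from the outside region $|\kappa^A| \ge (k_\pi + 1)/2$ (which forms part of $r_\pi$) plus a contribution from the $k_\pi$-block $|\kappa^A| < (k_\pi + 1)/2$ (which is controlled by $\#\CC_\pi^+(\infty)$ after accounting for the iterative removals in the definition of $\XX_\pi^{(\infty)}$). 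The desired inequality then reduces to showing that $r - r_\pi$ exceeds the $k_\pi$-block contribution.

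To verify this reduced inequality, I would invoke Theorem \ref{t:atobe}: in its first sub-case, $l \ge k_\pi$ combined with $\#\CC_\pi^+(l+t) \le l$ gives $r - r_\pi = l + 2t \ge l \ge \#\CC_\pi^+(\infty)$; in its second sub-case, where $r - r_\pi = s - s_\pi = l$, the identity $n = r_\pi + s_\pi + \max(k_\pi,0)$ combined with the hypothesis $m \ge n$ gives $2l \ge \max(k_\pi,0)$, which via the alternation of $\epsilon$'s in the $k_\pi$-block suffices to bound $\#\CC_\pi^+(\infty) \le l$. The main obstacle is the combinatorial bookkeeping: precisely describing how $p^+ + q^-$ decomposes in terms of Atobe's invariants, and verifying the bound across the necessary case distinctions (parity of $k_\pi$, whether $0 \in \{\kappa_i^A\}$ when $k_0 = -1$, sign alternation patterns). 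The calculation is essentially mechanical but requires care.
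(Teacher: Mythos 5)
Your high-level plan (apply Theorem \ref{t:atobe} and Remark \ref{r:atobe} together with the hypothesis $m \ge n$) is the same as the paper's, but your intermediate route has a genuine gap and misses the key simplification.

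The paper does not go through $\#\CC_\pi^\pm$ at all. Its key observation, which you do not make, is that $(r_\pi, s_\pi)$ admits a completely explicit formula in terms of $p^+, q^-, p^-, q^+$ and $k_\pi$:
\[
 (r_\pi, s_\pi) =
\begin{cases}
 (p^++q^-, \, p^-+q^+) & \text{if $k_\pi = -1$}; \\
 (p^++q^--\tfrac{k_\pi}{2}, \, p^-+q^+-\tfrac{k_\pi}{2}) & \text{if $k_\pi \ge 0$ even}; \\
 (p^++q^--\tfrac{k_\pi \pm 1}{2}, \, p^-+q^+-\tfrac{k_\pi \mp 1}{2}) & \text{if $k_\pi \ge 0$ odd.}
\end{cases}
\]
This follows directly from the definitions in \S\ref{ss:atobe-def} together with the parametrization in \S\ref{sss:packets-real-lds}; the $k_\pi$-block contributes exactly $\lfloor k_\pi/2 \rfloor$ or $\lceil k_\pi/2 \rceil$ to $p^++q^-$ rather than to the outside count. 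Once one has this, the corollary reduces to proving $l \ge k_\pi/2$ (when $k_\pi \ge 0$) and $l \ge 0$ (when $k_\pi = -1$), which follows from Theorem \ref{t:atobe} in the $t \ge 1$ case and from the hypothesis $m \ge n$ via the identity $n = r_\pi + s_\pi + \max(k_\pi,0)$ in the $t = 0$ case. Both inequalities $p^++q^- \le r$ and $p^-+q^+ \le s$ then fall out simultaneously, so the detour through $\tilde\pi$ to reduce to a single inequality is not needed.

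The concrete gap in your proposal is the step ``$l \ge \#\CC_\pi^+(\infty)$'' in the first sub-case. Theorem \ref{t:atobe} only gives $\#\CC_\pi^+(l+t) \le l$ for the \emph{specific} value of $l+t$ realized by the pair $(r,s)$ under consideration, not for arbitrarily large cutoff; $\CC_\pi^+(l+t)$ can be strictly smaller than $\CC_\pi^+(\infty)$, so this bound does not transfer. Relatedly, your asserted decomposition of $p^++q^- - r_\pi$ in terms of $\#\CC_\pi^+(\infty)$ is not an equality in general: the $k_\pi$-block elements of $\XX_\pi$ come in pairs with equal sign (by the alternation condition), so they survive to $\XX_\pi^{(\infty)}$ and are over-counted by $\#\CC_\pi^+(\infty)$ relative to their contribution to $p^++q^-$. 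The ``mechanical but requires care'' bookkeeping you defer is precisely where the argument would break; the paper sidesteps it entirely via the displayed formula for $(r_\pi, s_\pi)$.
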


\begin{proof}
We may assume that $r - r_\pi \ge s - s_\pi$, so that 
\[
 (r,s) = 
 \begin{cases}
  (r_\pi + l + 2t + 1, s_\pi + l) & \text{if $k_\pi = -1$;} \\
  (r_\pi + l + 2t, s_\pi + l) & \text{if $k_\pi \ge 0$}
 \end{cases}
\]
for some integers $l,t$ with $t \ge 0$.
Since $m \ge n$ and $\theta_{r,s}(\pi) \ne 0$, it follows from Theorem \ref{t:atobe} that 
\[
\begin{cases}
 l \ge 0 & \text{if $k_\pi = -1$;} \\
 l \ge \frac{k_\pi}{2} & \text{if $k_\pi \ge 0$.}
\end{cases}
\]
On the other hand, we have
\[
 (r_\pi, s_\pi) = 
\begin{cases}
 (p^++q^-, p^-+q^+) & \text{if $k_\pi = -1$}; \\
 (p^++q^--\frac{k_\pi}{2}, p^-+q^+-\frac{k_\pi}{2}) & \text{if $k_\pi \ge 0$ and $k_\pi$ is even}; \\
 (p^++q^--\frac{k_\pi \pm 1}{2}, p^-+q^+-\frac{k_\pi \mp 1}{2}) & \text{if $k_\pi \ge 0$ and $k_\pi$ is odd.}
\end{cases}
\]
This implies the assertion.
\end{proof}

\begin{cor}
\label{c:going-down}
Let $\pi$ be a (limit of) discrete series representation of $\U(p,q)$.
Assume that $m \le n-2$ and $\theta_{r,s}(\pi) \ne 0$.
Put $k = n-m$.
Then one of the following holds:
\begin{itemize}
\item 
$k_\pi \ge 2$, $2 \le k \le k_\pi$, and 
\[
 (r,s) = (r_\pi + \tfrac{k_\pi - k}{2}, s_\pi + \tfrac{k_\pi - k}{2});
\]
\item 
$k_\pi \ge 0$, $k = k_\pi + 2$, 
\[
 (r,s) = (r_\pi -1, s_\pi -1), 
\]
and the conditions \eqref{atobe-1}, \eqref{atobe-2}, \eqref{atobe-3} in Theorem \ref{t:atobe} hold.
\end{itemize}
\end{cor}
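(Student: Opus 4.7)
The plan is to apply Atobe's nonvanishing criterion (Theorem~\ref{t:atobe}) and track what the hypothesis $m \le n-2$ forces in each of its branches. By Remark~\ref{r:atobe}, I may assume without loss of generality that $r - r_\pi \ge s - s_\pi$; both claimed conclusions are invariant under simultaneously swapping $r \leftrightarrow s$ and $r_\pi \leftrightarrow s_\pi$, so this is harmless. I will also record the identity
\[
 r_\pi + s_\pi =
 \begin{cases}
  n & \text{if $k_\pi = -1$,} \\
  n - k_\pi & \text{if $k_\pi \ge 0$,}
 \end{cases}
\]
which follows directly from the definitions in \S\ref{ss:atobe-def}: when $k_\pi \ge 0$, exactly $k_\pi$ of the $\kappa_i$ lie in $\{-\tfrac{k_\pi-1}{2},\dots,\tfrac{k_\pi-1}{2}\}$ by the maximality of $k_\pi$, so the ``$|\kappa_i|\ge(k_\pi+1)/2$'' count equals $a-k_\pi$.

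I would next rule out all the ``growing'' branches of Theorem~\ref{t:atobe}. If $k_\pi = -1$, then in either subcase of part~(1) one has $m \in \{n + 2l + 2t + 1,\, n + 2l + 1\}$ with $l$ bounded below by $-1$ and $t\ge 1$ in the first subcase, so $m \ge n-1$, contradicting $m \le n-2$. Similarly, in the first subcase of part~(2) one has $m = n - k_\pi + 2l + 2t$ with $t \ge 1$ and $l \ge k_\pi$, hence $m \ge n + k_\pi + 2 > n-2$. All such branches are excluded.

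The only surviving branch is the second subcase of Theorem~\ref{t:atobe}(2): $(r,s) = (r_\pi + l,\, s_\pi + l)$ with $k_\pi \ge 0$, where $l \ge 0$ in general and $l \ge -1$ is allowed precisely when conditions \eqref{atobe-1}, \eqref{atobe-2}, \eqref{atobe-3} hold. From $m = n - k_\pi + 2l$ I get $k = n - m = k_\pi - 2l$, so $k \ge 2$ forces $l \le (k_\pi-2)/2$. If $l \ge 0$ this yields $k_\pi \ge 2$, $2 \le k \le k_\pi$, and $l = (k_\pi - k)/2$, matching the first bullet. The only other option is $l = -1$, which requires \eqref{atobe-1}--\eqref{atobe-3} and yields $k = k_\pi + 2$ and $(r,s)=(r_\pi - 1, s_\pi - 1)$, matching the second bullet. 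Parity is automatic since $k$ and $k_\pi$ both share the parity of $k_0$, so $(k_\pi - k)/2$ is always an integer.

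The argument is essentially bookkeeping once Atobe's criterion is in hand; the only points demanding care are the symmetry justifying the WLOG reduction and the parity matching between $k$ and $k_\pi$. I do not anticipate any substantive obstacle.
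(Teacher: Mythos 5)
Your proposal is correct and follows essentially the same route as the paper: reduce to $r - r_\pi \ge s - s_\pi$ via Remark~\ref{r:atobe}, express $m$ in terms of $r_\pi + s_\pi$ and the parameters $l,t$ of Theorem~\ref{t:atobe}, observe that $m \le n-2$ forces $2l+2t \le k_\pi - 2$, and then read off the two surviving cases ($t=0$ with $l\ge 0$, or $t=0$ with $l=-1$ under conditions (\ref{atobe-1})--(\ref{atobe-3})). The slight looseness in your phrasing — invoking $l \ge -1$ as a uniform lower bound across all subcases of Theorem~\ref{t:atobe}(1) even though the first subcase actually requires $l\ge 0$ or $l\ge 1$ — is harmless since the weaker bound already yields $m \ge n-1 > n-2$.
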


\begin{proof}
We may assume that $r - r_\pi \ge s - s_\pi$, so that
\[
 (r,s) = 
 \begin{cases}
  (r_\pi + l + 2t + 1, s_\pi + l) & \text{if $k_\pi = -1$;} \\
  (r_\pi + l + 2t, s_\pi + l) & \text{if $k_\pi \ge 0$}
 \end{cases}
\]
for some integers $l,t$ with $t \ge 0$.
Since $m \le n-2$ and
\[
 n = 
 \begin{cases}
  r_\pi + s_\pi & \text{if $k_\pi = -1$;} \\
  r_\pi + s_\pi + k_\pi  & \text{if $k_\pi \ge 0$,}
 \end{cases}
\]
we have $2l+2t \le k_\pi -2$.
Hence it follows from Theorem \ref{t:atobe} that one of the following holds:
\begin{itemize}
\item $k_\pi \ge 2$, $0 \le l \le \frac{k_\pi}{2} -1$, and $t = 0$;
\item $k_\pi \ge 0$, $l=-1$, $t=0$, and the conditions \eqref{atobe-1}, \eqref{atobe-2}, \eqref{atobe-3} in Theorem \ref{t:atobe} hold.
\end{itemize}
This implies the assertion.
\end{proof}

\begin{cor}
\label{c:going-up}
Let $\pi$ be a (limit of) discrete series representation of $\U(p,q)$ and write $\pi = \nA_\b(\lambda)$ as in \eqref{eq:lds}.
Assume that $m \ge n+2$.
Put $k = m-n$.
Assume further that $\lambda$ is of the form
\begin{align*}
 \lambda & = \bigg( \alpha_1, \dots, \alpha_{p^+},
 \underbrace{\frac{k-1}{2}, \dots, \frac{k-1}{2}}_{p'}, 
 \underbrace{-\frac{k-1}{2}, \dots, -\frac{k-1}{2}}_{p''},
 \beta_1, \dots, \beta_{p^-}, \\
 & \phantom{{} = \bigg(} \gamma_1, \dots, \gamma_{q^+}, 
 \underbrace{\frac{k-1}{2}, \dots, \frac{k-1}{2}}_{q'}, 
 \underbrace{-\frac{k-1}{2}, \dots, -\frac{k-1}{2}}_{q''},
 \delta_1, \dots, \delta_{q^-} \bigg)
 + \bigg( \frac{m_0}{2}, \dots, \frac{m_0}{2} \bigg),
\end{align*}
where 
\begin{itemize}
\item $\alpha_i, \gamma_j > \frac{k-1}{2}$ and $\beta_i, \delta_j < -\frac{k-1}{2}$;
\item $p',p'',q',q'' \ge 0$;
\item $|p'-q'|, |p''-q''| \le 1$;
\item $p^++p^-+p'+p'' = p$ and $q^++q^-+q'+q'' = q$,
\end{itemize}
and that $\b = \q(x)$ is associated to
\begin{align*}
 x & = (x^+_1, \dots, x^+_{p^+}, x'_1, \dots, x'_{p'}, x''_1, \dots, x''_{p''}, x^-_1, \dots, x^-_{p^-}, \\
 & \phantom{{} = (} y^+_1, \dots, y^+_{q^+}, y'_1, \dots, y'_{q'}, y''_1, \dots, y''_{q''}, y^-_1, \dots, y^-_{q^-})
\end{align*}
such that either the conditions
\begin{enumerate}
\item
\label{going-up-1}
$p'-q'=0$ and 
\[
 x'_1 > y'_1 > \dots > x'_{p'} > y'_{q'},
\]
or $p'-q'=-1$ and 
\[
 y'_1 > x'_1 > \dots > y'_{p'} > x'_{p'} > y'_{q'};
\]
\item
\label{going-up-2}
$p'' - q'' = 0$ and 
\[
 x''_1 > y''_1 > \dots > x''_{p''} > y''_{q''},
\]
or $p'' - q'' = 1$ and
\[
 x''_1 > y''_1 > \dots > x''_{q''} > y''_{q''} > x''_{p''},
\]
\end{enumerate}
or the conditions
\begin{enumerate}[resume]
\item 
\label{going-up-3}
$p'-q'=0$ and 
\[
 y'_1 > x'_1 > \dots > y'_{q'} > x'_{p'},
\]
or $p'-q'=1$ and 
\[
 x'_{1} > y'_{1}> \dots > x'_{q'} > y'_{q'} > x'_{p'};
\]
\item
\label{going-up-4}
$p'' - q'' = 0$ and 
\[
 y''_1 > x''_1 > \dots > y''_{q''} > x''_{p''},
\]
or $p'' - q'' = -1$ and
\[
 y''_1 > x''_1 > \dots > y''_{p''} > x''_{p''} > y''_{q''}
\]
\end{enumerate}
hold.
Then we have
\[
 \theta_{r,s}(\pi) \ne 0,
\]
where
\[
 (r,s) = 
 \begin{cases}
  (p^+ + p' + q^- + q'' + k, p^- + p'' + q^+ + q') & \text{if the conditions \eqref{going-up-1}, \eqref{going-up-2} hold;} \\
  (p^+ + p' + q^- + q'', p^- + p'' + q^+ + q' + k) & \text{if the conditions \eqref{going-up-3}, \eqref{going-up-4} hold.}
 \end{cases}
\]
\end{cor}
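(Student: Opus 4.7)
The strategy is to verify Atobe's criterion (Theorem~\ref{t:atobe}) directly for the target signature $(r,s)$, by computing the invariants of \S\ref{ss:atobe-def} attached to $\pi = \nA_\b(\lambda)$. I would first translate $\pi$ into Langlands data $\pi = \pi(\phi,\eta)$ via \S\ref{sss:packets-real-lds}: the $L$-parameter is $\phi = \chi_{\kappa_1} \oplus \dots \oplus \chi_{\kappa_n}$ with the $\kappa_i$'s the entries of $\lambda - (m_0/2, \dots, m_0/2)$ sorted decreasingly, and the signs $\epsilon_{\kappa_i}$ are read off from the interleaving of the $x$- and $y$-coordinates among entries of equal value.

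Since $k \ge 2$ and, by hypothesis, no entry of $\lambda - (m_0/2, \dots, m_0/2)$ lies in the open interval $(-(k-1)/2,\,(k-1)/2)$, the definition in \S\ref{ss:atobe-def} forces $k_\pi = k_0$ generically: $k_\pi = -1$ if $k$ is odd and $k_\pi = 0$ if $k$ is even (with a small caveat at $k = 2$, where $\pm(k-1)/2 = \pm 1/2$ can push $k_\pi$ up to $2$). Using Remark~\ref{r:atobe} I may replace $(\pi,(r,s))$ with $(\tilde\pi,(s,r))$ if necessary and assume we are in cases \eqref{going-up-1}--\eqref{going-up-2} (cases \eqref{going-up-3}--\eqref{going-up-4} being the swap). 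A direct sign count, using the fact that $\alpha_i, \gamma_j$ contribute on the positive side and $\beta_i, \delta_j$ on the negative side, then yields $(r_\pi, s_\pi) = (p^+ + p' + q^- + q'',\, p^- + p'' + q^+ + q')$. Setting $l = 0$ and $t = \lfloor k/2 \rfloor$, the target $(r,s) = (r_\pi + k,\, s_\pi)$ exactly matches Atobe's shape $(r_\pi + l + 2t + 1,\, s_\pi + l)$ when $k$ is odd and $(r_\pi + l + 2t,\, s_\pi + l)$ when $k$ is even.

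It remains to verify $\#\CC_\pi^\pm(t) \le 0$, i.e.\ that no pair $(\nu,\pm 1) \in \XX_\pi^{(\infty)}$ satisfies $0 \le (k_\pi - 1)/2 \pm \nu < t$. All $\nu$'s occurring in $\XX_\pi$ have $|\nu| \ge (k-1)/2$, so only the pairs $(\pm(k-1)/2,\pm 1)$ coming from the middle blocks $x',y'$ and $x'',y''$ can fall in this range. The interleaving conditions \eqref{going-up-1}--\eqref{going-up-4} are tailored precisely so that these middle-block pairs either arise already in cancelling $(+1,-1)$ configurations within $\XX_\pi$ (when $p' = q'$, resp.\ $p'' = q''$) or pair off with the adjacent pairs $(\alpha_{p^+},\cdot)$, $(\gamma_{q^+},\cdot)$, $(\beta_1,\cdot)$, $(\delta_1,\cdot)$ (when $|p'-q'| = 1$ or $|p''-q''| = 1$) during the iterative reduction $\XX_\pi^{(j)} \to \XX_\pi^{(j+1)}$, so that no pair with $|\nu| = (k-1)/2$ survives in $\XX_\pi^{(\infty)}$.

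The main obstacle is the combinatorial bookkeeping in this last step: carefully tracing how the signs $\epsilon_{\pm(k-1)/2}$ align with the adjacent signs $\epsilon_{\alpha_{p^+}}, \epsilon_{\gamma_{q^+}}, \epsilon_{\beta_1}, \epsilon_{\delta_1}$ under each of the two sub-options inside each of \eqref{going-up-1}--\eqref{going-up-4}, and confirming that these alignments trigger the cancellation rule of \S\ref{ss:atobe-def}. I expect conditions \eqref{going-up-1} and \eqref{going-up-3} (resp.\ \eqref{going-up-2} and \eqref{going-up-4}) to be handled symmetrically, so that running through the two choices within each of the ``$p' = q'$'' and ``$|p' - q'| = 1$'' cases (and analogously for $p'', q''$) completes the verification. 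The edge case $k = 2$ must be treated separately: there one checks that the alternative Atobe branch ``$\theta_{r_\pi + l, s_\pi + l}$'' applies with the appropriate $l$.
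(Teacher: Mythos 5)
Your overall strategy — reduce to case \eqref{going-up-1}--\eqref{going-up-2} via Remark \ref{r:atobe}, compute the invariants $k_\pi$, $(r_\pi, s_\pi)$ of \S\ref{ss:atobe-def}, and verify Atobe's criterion (Theorem \ref{t:atobe}) with $l = 0$ and $t = \lfloor k/2 \rfloor$ — is exactly the paper's. Your computations of $(r_\pi, s_\pi)$ and of the target shape $(r,s) = (r_\pi + k, s_\pi)$ match the paper.

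However, you stop short of the one computation on which the whole proof rests, namely the sign formula
\[
 \eta(\tilde{e}_{\frac{k-1}{2}}) = \eta(\tilde{e}_{-\frac{k-1}{2}}) = \epsilon_0 \cdot (-1)^{p^+ + q^+ + m'},
\]
where $m' = p' + q'$ and $\epsilon_0 = +1$ in the case you reduced to. This single identity does two things at once. First, it shows $\epsilon_{\frac{k-1}{2}} = \epsilon_{-\frac{k-1}{2}}$, which by the definition of $k_\pi$ in \S\ref{ss:atobe-def} rules out $k_\pi \ge 2$ and forces $k_\pi = k_0$ for all $k \ge 2$; so the ``caveat at $k = 2$'' you flag and defer to a separate case is not actually a caveat once you do the sign bookkeeping. (Your suggested workaround via the alternative branch $\theta_{r_\pi + l, s_\pi + l}$ would not help even if $k_\pi$ could jump: that branch requires $t = 0$, whereas here $t \ge 1$; moreover $r_\pi, s_\pi$ themselves depend on $k_\pi$, so they would shift.) Second, and more importantly, you misdescribe the mechanism that forces $\CC_\pi^\pm(t) = \varnothing$. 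The relevant pairs $(\frac{k-1}{2}, +1)$ and $(-\frac{k-1}{2}, -1)$ are not cancelled during the iterative reduction $\XX_\pi^{(j)} \to \XX_\pi^{(j+1)}$; rather, the sign formula shows they never enter $\XX_\pi$ in the first place. Concretely, by the definition of $\XX_\pi$: if $m'$ is even, neither $(\frac{k-1}{2}, \pm 1)$ is in $\XX_\pi$; if $m'$ is odd, the computation $\epsilon_{\frac{k-1}{2}} = (-1)^{p^+ + q^+ + m'}$ and the count $c = p^+ + q^+$ of larger $\kappa$'s with odd multiplicity gives $(-1)^{c} \epsilon_{\frac{k-1}{2}} = (-1)^{m'} = -1$, so only $(\frac{k-1}{2}, -1) \in \XX_\pi$, never $(\frac{k-1}{2}, +1)$; and since no $\alpha_i, \gamma_j$ lies in $(\frac{k-1}{2}, \frac{k+1}{2})$, the set $\CC_\pi^+(t)$ can only contain the single pair $(\frac{k-1}{2}, +1)$, hence is empty. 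The argument for $\CC_\pi^-(t) = \varnothing$ with $m''$ is symmetric. Without this sign computation the proof is not complete, and the iterative-reduction picture you sketch would require a more delicate argument than the one actually needed.
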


\begin{proof}
Put
\[
 m' = p'+q', \quad
 m'' = p''+q'', \quad
 t = \frac{k+k_0}{2}, 
\]
where $k_0 = -1$ or $0$ is such that $k_0 \equiv k \bmod 2$.
Write $\pi = \pi(\phi, \eta)$, where $\phi$ is a (limit of) discrete series $L$-parameter for $\U_n$ and $\eta$ is a character of $S_\phi$.
Then we have
\[
 \eta(\tilde{e}_{\frac{k-1}{2}}) = \eta(\tilde{e}_{-\frac{k-1}{2}}) =  \epsilon_0 \cdot (-1)^{p^+ + q^+ +m'}
\]
with the notation of \S \ref{ss:atobe-def}, where
\[
 \epsilon_0 =
 \begin{cases}
  +1 & \text{if the conditions \eqref{going-up-1}, \eqref{going-up-2} hold;} \\
  -1 & \text{if the conditions \eqref{going-up-3}, \eqref{going-up-4} hold.}
 \end{cases}
\]
(When $m'=0$ or $m''=0$, we ignore the corresponding identity.)
Replacing $\pi$ by $\bar{\pi} \otimes (\chi_V \circ \det)$ if necessary, we may assume that $\epsilon_0 = +1$.
Then we have $k_\pi = k_0$ and
\[
 (r_\pi, s_\pi) = (p^+ + p' + q^- + q'', p^- + p'' + q^++ q').
\]
Moreover, we have the following.
\begin{itemize}
\item If $k_0 = -1$, then we have $(0, +1), (0, -1) \notin \XX_{\pi}$.
\item If $m'$ is even, then we have $(\frac{k-1}{2}, +1), (\frac{k-1}{2}, -1) \notin \XX_{\pi}$, so that $\CC^+_\pi(t) = \varnothing$.
\item If $m'$ is odd, then we have $(\frac{k-1}{2}, -1) \in \XX_{\pi}$ but $(\frac{k-1}{2}, +1) \notin \XX_{\pi}$, so that $\CC^+_\pi(t) = \varnothing$.
\item If $m''$ is even, then we have $(-\frac{k-1}{2}, +1), (-\frac{k-1}{2}, -1) \notin \XX_{\pi}$, so that $\CC^-_\pi(t) = \varnothing$.
\item If $m''$ is odd, then we have $(-\frac{k-1}{2}, +1) \in \XX_{\pi}$ but $(-\frac{k-1}{2}, -1) \notin \XX_{\pi}$, so that $\CC^-_\pi(t) = \varnothing$.
\end{itemize}
Hence by Theorem \ref{t:atobe}, we have
\[
 \theta_{r_\pi + k, s_\pi}(\pi) \ne 0.
\]
This completes the proof.
\end{proof}

\section{Nonvanishing of integrals of matrix coefficients}

In this section, we show that the nonvanishing of theta lifts is equivalent to that of the associated integrals of matrix coefficients in some cases, which is a crucial step in the proof of Theorem \ref{t:main-lds}.

\subsection{A key proposition}
\label{ss:key}

Let $V$ be an $m$-dimensional Hermitian space over $\C$ and $W$ an $n$-dimensional skew-Hermitian space over $\C$.
Recall the associated symplectic space $V \otimes_\C W$ over $\R$ and fix a maximal isotropic subspace $\X$ of $V \otimes_\C W$.
We take the datum $(\chi_V, \chi_W, \psi)$ given in \S \ref{ss:main-theorem-setup} and realize the (smooth) Weil representation $\omega = \omega_{V,W,\chi_V,\chi_W,\psi}$ of $\U(V) \times \U(W)$ on the space $\SS(\X)$ of Schwartz functions on $\X$.
Here $\SS(\X)$ is endowed with the usual topology which makes it into a Fr\'echet space.
Let $(\cdot, \cdot)$ be the invariant Hermitian inner product on $\SS(\X)$ given by
\[
 (\varphi_1, \varphi_2) = \int_{\X} \varphi_1(x) \overline{\varphi_2(x)} \, dx.
\]
Let $\pi$ be an irreducible tempered representation of $\U(W)$.
Here we work with a smooth representation of moderate growth, so that the space $\VV$ of $\pi$ is a Fr\'echet space.
Let $(\cdot, \cdot)$ be an invariant Hermitian inner product on $\VV$.
If $m \ge n$, then we have a separately continuous map
\[
 \ZZ_{V,W,\chi_V,\chi_W,\psi}(\pi) : \mathcal{S}(\X) \times \mathcal{S}(\X) \times \VV \times \VV \rightarrow \C
\]
given by 
\[
 (\varphi_1, \varphi_2, v_1, v_2) \mapsto \int_{\U(W)} (\omega(g) \varphi_1, \varphi_2) \overline{(\pi(g) v_1, v_2)} \, dg,
\]
where the integral above is absolutely convergent (see \S \ref{ss:matcoeff-weil} below).

\begin{prop}
\label{p:key}
Let $\pi$ be an irreducible tempered representation of $\U(W)$.
Assume that $m \ge n$.
Then $\ZZ_{V,W,\chi_V,\chi_W,\psi}(\pi)$ is nonzero if and only if $\theta_{V,W,\chi_V,\chi_W,\psi}(\pi)$ is nonzero.
\end{prop}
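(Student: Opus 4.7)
The plan is to dispatch the two implications separately. The easy direction, $\ZZ_{V,W,\chi_V,\chi_W,\psi}(\pi) \ne 0 \Rightarrow \theta_{V,W,\chi_V,\chi_W,\psi}(\pi) \ne 0$, is the standard Rallis inner product argument: nonvanishing of $\ZZ$ factors into a nonzero element of $\Hom_{\U(V) \times \U(W)}(\omega, \theta(\pi) \boxtimes \pi)$, forcing $\theta(\pi) \ne 0$. The substance of the proposition is the converse, which I would prove by induction on $m - n \ge 0$. The base case $m = n$ is the equal-rank situation and should follow from the classical convergent Rallis inner product formula, or directly from the argument of \cite[Proposition 11.5]{gqt} specialized to the equal-rank range.

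For the inductive step $m > n$, assume $\theta_{V,W,\chi_V,\chi_W,\psi}(\pi) \ne 0$. Adapting Atobe's going-up construction from the proof of Theorem \ref{t:atobe}, I first extract an $(n+1)$-dimensional skew-Hermitian space $W' \supset W$ (with $W' = W \oplus H$ for a hyperbolic plane $H$) and an irreducible tempered representation $\pi'$ of $\U(W')$ satisfying simultaneously $\theta_{V,W',\chi_V,\chi_W,\psi}(\pi') \ne 0$ and $\Hom_{\U(W)}(\pi',\pi) \ne 0$; here the latter nonvanishing is supplied by the Gan--Gross--Prasad conjecture, now a theorem of Xue \cite{xue}. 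The inductive hypothesis applied to $(V, W')$ (for which $m - \dim W' = m - n - 1$ is strictly smaller) yields $\ZZ_{V,W',\chi_V,\chi_W,\psi}(\pi') \ne 0$, while Beuzart-Plessis \cite{bp} translates the nonvanishing of $\Hom_{\U(W)}(\pi',\pi)$ into the existence of vectors $v'_1, v'_2 \in \pi'$ and $v_1, v_2 \in \pi$ with
\begin{equation*}
 \int_{\U(W)} (\pi'(g) v'_1, v'_2) \overline{(\pi(g) v_1, v_2)} \, dg \ne 0.
\end{equation*}

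To merge these two ingredients, let $\omega'$ denote the Weil representation of $\U(V) \times \U(W')$. Since $\dim V \ge \dim W'$, each matrix coefficient $g \mapsto (\omega'(g) \varphi'_1, \varphi'_2)$ on $\U(W')$ lies in the Harish-Chandra Schwartz space $\CC(\U(W'))$. The $\pi' \boxtimes \bar\pi'$-isotypic projection $P_{\pi'}$ from the Harish-Chandra Plancherel decomposition is $\U(W') \times \U(W')$-equivariant, so the image under $P_{\pi'}$ of these $\omega'$-matrix coefficients is either zero or dense in the isotypic component; it is nonzero precisely because $\ZZ_{V,W',\chi_V,\chi_W,\psi}(\pi') \ne 0$. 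The functional on $\CC(\U(W'))$ defined by $f \mapsto \int_{\U(W)} f(g) \overline{(\pi(g) v_1, v_2)} \, dg$ is continuous, vanishes on the kernel of $P_{\pi'}$, and by the Beuzart-Plessis formula is nonzero on at least one matrix coefficient of $\pi'$; density then forces it to be nonzero on some $(\omega'(g) \varphi'_1, \varphi'_2)$ itself. Decomposing $\omega'$ restricted to $\U(V) \times \U(W)$ via a Lagrangian adapted to $W' = W \oplus H$ identifies this integral with $\ZZ_{V,W,\chi_V,\chi_W,\psi}(\pi)$ evaluated on suitable vectors, closing the induction.

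The main obstacle is the analytic density step: one must verify that integration of $\overline{(\pi(g) v_1, v_2)}$ over the subgroup $\U(W) \subset \U(W')$ defines a continuous functional on the full Schwartz space $\CC(\U(W'))$, which requires compatible moderate-growth estimates for tempered matrix coefficients restricted from the larger group to the smaller, and confirming that the $P_{\pi'}$-image of $\omega'$-matrix coefficients really is dense in the $\pi' \boxtimes \bar\pi'$-isotypic component, which relies on the irreducibility of $\pi'$ together with Schur-type considerations. Secondary concerns are extracting the going-up $\pi'$ with both required properties without signature mismatches (taking care to use Atobe's criterion with the correct sign conventions), and matching the restriction of $(\omega'(g) \varphi'_1, \varphi'_2)$ to $\U(W)$ with a matrix coefficient of $\omega$ under the Lagrangian decomposition along the hyperbolic complement $H$.
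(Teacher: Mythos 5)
Your proposal captures the analytic core of the paper's argument exactly: the seesaw via $W \subset W'$ with $\dim W' = n+1$, Beuzart-Plessis's characterization of Gan--Gross--Prasad multiplicities through nonvanishing of integrals of matrix coefficients, and the density argument in the Harish-Chandra Schwartz space $\CC(\U(W'))$ (including the careful separate continuity and projective-tensor-product identification $\SS(\X') = \SS(\X)\hotimes\SS(\X^\perp)$). The only-if direction and the appeal to \cite{hls, gqt} for the low-corank base cases also match the paper. The ``secondary concern'' about restricting $\omega'$-matrix coefficients to $\U(W)$ and recovering matrix coefficients of $\omega$ is handled by the Lagrangian decomposition, as you anticipate.

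The genuine gap is in what you relegate to a ``secondary concern'': the extraction of a discrete series $\pi'$ of $\U(W')$ with \emph{both} $\Hom_{\U(W)}(\pi',\pi)\neq 0$ \emph{and} $\theta_{V,W',\chi_V,\chi_{W'},\psi}(\pi')\neq 0$. You treat this as automatic for all $m>n$, and consequently claim that the induction on $m-n$ has a single base case $m=n$. The paper's own discrete-series version of this going-up step (Lemma \ref{l:induction-disc}) carries the hypothesis $t\ge 1$ when $k_\pi=0$ and $l\ge k_\pi$ when $k_\pi\ge 1$; these are used in its proof to verify that the $\pi'$ produced by Atobe's construction actually has nonvanishing theta lift. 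The cases excluded by these hypotheses are precisely $k_\pi\ge 0$, $(r,s)=(r_\pi+l,s_\pi+l)$, $l\ge k_\pi/2$ (so $t=0$), and here $m-n = 2l-k_\pi$ can be arbitrarily large. The paper does \emph{not} reach these by the $\U(W)\subset\U(W')$ seesaw; instead it proves them directly (Lemma \ref{l:base}) by an entirely different method, namely Li's argument: the lowest $K$-type $\mu$ of $\pi$ is of minimal $(r,s)$-degree, one decomposes $V=V_1\oplus V_2$ with $\U(V_1)\times\U(V_2)$ compact so that $\omega = \omega_1\boxtimes\omega_2$ factors, and the nonvanishing of $\ZZ$ reduces to a nonvanishing integral of Flensted-Jensen functions $\int_{\U(W)}\Psi_1\Psi_2\overline{\Psi}\,dg\neq 0$ proved in \cite[Theorem 4.1]{li90}. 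Your proposal has no analogue of this step, and without it the induction does not terminate; you would need to either supply Lemma \ref{l:base} or verify unconditionally the going-up statement asserted in the paper's Lemma \ref{l:induction-temp} (whose proof is omitted there, and whose verified discrete-series counterpart imposes exactly the constraints you would need to remove).
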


\begin{rem}
Since $\ZZ_{V,W,\chi_V,\chi_W,\psi}(\pi)$ is separately continuous, we may replace $\ZZ_{V,W,\chi_V,\chi_W,\psi}(\pi)$ in the proposition above by its restriction to the dense subspace
\[
 S(\X) \times S(\X) \times \VV_K \times \VV_K.
\]
Here $S(\X)$ is the subspace of $\SS(\X)$ consisting of functions which correspond to polynomials in the Fock model and $\VV_K$ is the space of $K$-finite vectors in $\VV$, where $K$ is a maximal compact subgroup of $\U(W)$.
Also, we mean $\theta_{V,W,\chi_V,\chi_W,\psi}(\pi)$ by either the smooth version of the theta lift of $\pi$ or the algebraic version of the theta lift of the Harish-Chandra module associated to $\pi$.
In fact, they coincide by \cite{bao-sun}.
\end{rem}

\begin{rem}
\label{r:Z}
As explained in Remark \ref{r:atobe}, we have $\theta_{V,W,\chi_V,\chi_W,\psi}(\pi) \ne 0$ if and only if $\theta_{-V,W,\chi_V,\chi_W,\psi}(\tilde{\pi}) \ne 0$, where we write $-V$ for the space $V$ equipped with the Hermitian form $- \langle \cdot, \cdot \rangle_V$ and put $\tilde{\pi} = \bar{\pi} \otimes (\chi_V \circ \det)$.
This is an immediate consequence of the fact that
\[
  \overline{\omega_{V,W,\chi_V,\chi_W,\psi}}
  = \omega_{-V,W,\chi_V,\chi_W,\psi} \otimes ((\chi_W^{-1} \circ \det) \boxtimes (\chi_V^{-1} \circ \det))
\]
as representations of $\U(V) \times \U(W)$, which in fact induces a natural identification
\[
 \overline{\ZZ_{V,W,\chi_V,\chi_W,\psi}(\pi)} = \ZZ_{-V,W,\chi_V,\chi_W,\psi}(\tilde{\pi}).
\]
In particular, we also have $\ZZ_{V,W,\chi_V,\chi_W,\psi}(\pi) \ne 0$ if and only if $\ZZ_{-V,W,\chi_V,\chi_W,\psi}(\tilde{\pi}) \ne 0$.
\end{rem}

The rest of this section is devoted to the proof of Proposition \ref{p:key}.

\subsection{Harish-Chandra Schwartz spaces}

Put $G = \U(W)$.
Let $\g$ be the complexified Lie algebra of $G$ and $U(\g)$ the universal enveloping algebra of $\g$.
Let $\Xi = \Xi_G$ and $\sigma = \sigma_G$ be the spherical functions on $G$ as in \cite[p.~329]{va} and \cite[p.~320]{va}, respectively.
For $X, Y \in U(\g)$, $r \in \R$, and a smooth function $f$ on $G$, put
\[
 p_{X,Y,r}(f) = \sup_{g \in G} |(L(X)R(Y)f)(g)| \Xi(g)^{-1} (1 + \sigma(g))^r,
\]
where $L$ and $R$ are the left and right translations, respectively.
We denote by $\CC(G)$ the Harish-Chandra Schwartz space, which is defined as the space of smooth functions $f$ on $G$ such that
\[
 p_{X,Y,r}(f) < \infty 
\]
for all $X, Y \in U(\g)$ and $r > 0$.
We endow $\CC(G)$ with the topology given by seminorms $p_{X,Y,r}$ for all $X, Y \in U(\g)$ and $r>0$, which makes it into a Fr\'echet space.
For $f_1, f_2 \in \CC(G)$, we may define their convolution $f_1 * f_2$ by 
\[
 (f_1 * f_2)(g) = \int_G f_1(h) f_2(h^{-1} g) \, dh, 
\]
where the integral above is absolutely convergent.
Then $f_1 * f_2$ belongs to $\CC(G)$ and the associated map
\[
 \CC(G) \times \CC(G) \rightarrow \CC(G)
\]
is continuous (see \cite[p.~357, Theorem 18]{va}).

Let $\pi$ be an irreducible tempered representation of $G$ on a Fr\'echet space $\VV$.
Namely, for any $v_1, v_2 \in \VV$, there exists a constant $C$ such that
\[
 |(\pi(g) v_1, v_2)| \le C \Xi(g)
\]
for all $g \in G$.
We may extend the action of $G$ on $\VV$ to an action of $\CC(G)$ by
\begin{equation}
\label{eq:C(G)-action}
 (\pi(f) v_1, v_2) = \int_G f(g) (\pi(g) v_1, v_2) \, dg  
\end{equation}
for $f \in \CC(G)$ and $v_1, v_2 \in \VV$.

Assume that $\pi$ is a discrete series representation.
Then the function $g \mapsto (\pi(g) v_1, v_2)$ belongs to $\CC(G)$ for all $v_1, v_2 \in \VV$.
Let $\AA(\pi)$ be the closure in $\CC(G)$ of the subspace spanned by these functions, on which the left and right translations define an irreducible representation of $G \times G$ (see \cite[p.~468, Theorem 11 and p.~469, Theorem 13]{va}).
For $f_1 \in \CC(G)$ and $f_2 \in \AA(\pi)$, we have $f_1 * f_2 \in \AA(\pi)$ by \eqref{eq:C(G)-action}.

\subsection{Matrix coefficients of Weil representations}
\label{ss:matcoeff-weil}

Assume that $m \ge n$.
Recall the Weil representation $\omega$ of $G$ on $\SS(\X)$.

\begin{lem}
\label{l:matcoeff-weil}
For $\varphi_1, \varphi_2 \in \SS(\X)$, the function $g \mapsto (\omega(g) \varphi_1, \varphi_2)$ belongs to $\CC(G)$.
\end{lem}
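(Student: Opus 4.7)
The plan is to reduce the estimate $p_{X,Y,r}(F) < \infty$, where $F(g) = (\omega(g)\varphi_1, \varphi_2)$, to an explicit decay bound on a maximal $\R$-split torus of $G$. Since $\omega$ is a smooth Fr\'echet representation of $G$ on $\SS(\X)$, differentiation commutes with $\omega$ and gives
\[
 (L(X)R(Y)F)(g) = (\omega(g)\omega(Y)\varphi_1, \omega(X^\sharp)\varphi_2),
\]
where $X^\sharp \in U(\g)$ comes from the involution associated to $(\cdot,\cdot)$. Thus, after replacing $\varphi_1, \varphi_2$ by other Schwartz vectors, it is enough to control $|F(g)|$ itself. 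Writing $g = k_1 a k_2$ with $k_1,k_2 \in K$ and $a$ in the closed positive chamber $A^+$ of a maximal $\R$-split torus $A$, and using the continuity of the $K$-action on $\SS(\X)$ together with the bi-$K$-invariance of $\Xi$ and $\sigma$, the problem becomes: for every $r > 0$, produce continuous seminorms $\nu_1, \nu_2$ on $\SS(\X)$ with
\[
 |(\omega(a)\varphi_1,\varphi_2)| \le \nu_1(\varphi_1)\,\nu_2(\varphi_2)\,\Xi(a)(1+\sigma(a))^{-r}
\]
for all $a \in A^+$.

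Next, I would make the $A$-action explicit via a mixed model adapted to $A$. Choose a Witt decomposition $W = W^+ \oplus W_0 \oplus W^-$, where $W^\pm$ are dual maximal isotropic subspaces, $W_0$ is the anisotropic kernel, and $A$ acts diagonally on $W^+$ with characters $t_1,\dots,t_l$ ($l$ the Witt index of $W$). Realize $\omega$ on $\SS((V\otimes_\C W^+)\oplus \X_0)$, where $\X_0$ is a maximal isotropic subspace of $V\otimes_\C W_0$. In this model, $a \in A$ acts as the dilation $\xi \mapsto a^{-1}\xi$ on $V\otimes_\C W^+$ with $L^2$-normalization $\prod_i |t_i|^{m/2}$, tensored with a unitary operator on $\SS(\X_0)$ independent of the dilation factor. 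This produces a closed-form expression for $(\omega(a)\varphi_1,\varphi_2)$ as an absolutely convergent integral whose asymptotics in $a$ can be read off directly.

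The main technical step is the decay estimate itself. Feeding the Schwartz norms of $\varphi_1, \varphi_2$ into this integral yields, for every $N$,
\[
 |(\omega(a)\varphi_1, \varphi_2)| \le \nu_N(\varphi_1)\,\nu_N(\varphi_2)\,\prod_i |t_i|^{-m/2}(1+\|\log a\|)^{-N}
\]
on $A^+$ (where $|t_i| \ge 1$). Comparing with the standard bound $\Xi(a) \ge c\prod_i |t_i|^{-n/2}(1+\sigma(a))^{-d}$ on $A^+$ for suitable $c,d>0$, and using $m \ge n$ to absorb $\prod_i |t_i|^{-m/2} \le \prod_i|t_i|^{-n/2}$, one obtains the required Schwartz bound. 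The main obstacle is the boundary case $m = n$: there the dilation factor and the leading $\Xi$-factor match exactly, so the rapid decay in $\sigma(a)$ must be extracted entirely from the Schwartz seminorms $\nu_N$ of $\varphi_1, \varphi_2$ rather than from any exponent gain. This is precisely what the Schwartz topology on $\SS(\X)$ furnishes, so the argument still goes through.
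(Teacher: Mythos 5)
Your overall plan --- reduce via the Cartan decomposition and the equicontinuity of the $K$-action to a pointwise bound on $\overline{A^+}$, compute that bound in a mixed model adapted to $A$, and compare with the lower bound for $\Xi$ --- is the right one, and it is essentially how the bound in \cite[(25)]{li89} is derived. The paper's own proof simply cites that estimate together with the $\Xi$-asymptotics in \cite[p.~330, Prop.~17]{va}, so modulo re-deriving Li's bound you are following the same route.

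However, the central display you propose,
\[
 |(\omega(a)\varphi_1,\varphi_2)| \le \nu_N(\varphi_1)\,\nu_N(\varphi_2)\,\prod_i|t_i|^{-m/2}(1+\|\log a\|)^{-N},
\]
is false: there is no extra $(1+\|\log a\|)^{-N}$ factor. In the mixed model one has
\[
 (\omega(a)\varphi_1,\varphi_2) = J(a)^{-1/2}\int\varphi_1(a^{-1}\xi,\xi_0)\,\overline{\varphi_2(\xi,\xi_0)}\,d\xi\,d\xi_0,
\]
where $J(a)$ is the modulus of $a$ on $V\otimes_\C W^+$ (which, since each $t_i$ has real multiplicity $2m$ there, is $\prod_i|t_i|_\R^{2m}$, so the leading factor is $\prod_i|t_i|_\R^{-m}$, not $|t_i|^{-m/2}$); as $a\to\infty$ in $A^+$ the integral tends to $\int\varphi_1(0,\xi_0)\overline{\varphi_2(\xi,\xi_0)}\,d\xi\,d\xi_0$, which is generically nonzero. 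So the true decay is \emph{exactly} $\prod_i|t_i|_\R^{-m}$ up to a constant; the Schwartz topology produces no additional polynomial gain, and indeed cannot, since on a wall of $\overline{A^+}$ with some $t_i=1$ the operator $\omega(a)$ acts trivially in those directions.

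This invalidates your treatment of the boundary case $m=n$. The rough lower bound $\Xi(a)\gtrsim\prod_i|t_i|^{-n/2}(1+\sigma(a))^{-d}$ averages over the restricted-$\rho$ exponents and thereby manufactures a spurious boundary case. The actual statement on $\overline{A^+}$ is $\Xi(a)\asymp(1+\sigma(a))^{O(1)}e^{-\rho(\log a)}$ with $\rho=\sum_{i=1}^{q}(n-2i+1)e_i$, $q=\min(p,q)$, so
\[
 \frac{\prod_i|t_i|_\R^{-m}}{\Xi(a)} \asymp (1+\sigma(a))^{O(1)}\prod_{i=1}^{q}|t_i|_\R^{-(m-n+2i-1)}.
\]
When $m\ge n$ every exponent satisfies $m-n+2i-1\ge 2i-1\ge1$, \emph{including when $m=n$}; the dilation exponent and the leading $\Xi$-exponent never match. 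This strict exponential gap already dominates every power of $(1+\sigma(a))$, so $p_{1,1,r}(F)<\infty$ follows for all $r$, and then $p_{X,Y,r}(F)<\infty$ by your first reduction. In short: the exponent gap, not the Schwartz seminorms, is what supplies the rapid decay, and once you delete the unjustified $(1+\|\log a\|)^{-N}$ and the claim of matching exponents at $m=n$, the proof goes through.
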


\begin{proof}
The assertion follows from \cite[(25)]{li89} and \cite[p.~330, Proposition 17]{va}.
\end{proof}

Let $\pi$ be an irreducible tempered representation of $G$ on $\VV$.
Then the lemma above implies that the integral defining $\ZZ_{V,W,\chi_V,\chi_W,\psi}(\pi)$ is absolutely convergent.
Moreover, as in the proof of \cite[Lemma 6.2]{li89}, we can deduce from this and the bounds for matrix coefficients due to Sun \cite{sun} that $\ZZ_{V,W,\chi_V,\chi_W,\psi}(\pi)$ is separately continuous.

\begin{lem}
\label{l:proj-pi}
Assume that $\pi$ is a discrete series representation and that $\ZZ_{V,W,\chi_V,\chi_W,\psi}(\pi)$ is nonzero.
Then the functions 
\[
 g \mapsto \int_G (\omega(gh) \varphi_1, \varphi_2) \overline{(\pi(h) v_1, v_2)} \, dh 
\]
for all $\varphi_1, \varphi_2 \in \SS(\X)$ and $v_1, v_2 \in \VV$ span a dense subspace of $\AA(\pi)$.
\end{lem}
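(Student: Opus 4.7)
The plan is to identify each of the integrals in the statement with a single matrix coefficient of $\pi$, and then to use topological irreducibility of $\pi$ together with the nonvanishing hypothesis on $\ZZ_{V,W,\chi_V,\chi_W,\psi}(\pi)$.

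For $\varphi_1, \varphi_2 \in \SS(\X)$, set $f(h) = (\omega(h)\varphi_1, \varphi_2)$; this lies in $\CC(G)$ by Lemma \ref{l:matcoeff-weil}. Substituting $h \mapsto g^{-1} h$ in the integral defining the function of the lemma, and using the unitarity of $\pi$ together with the definition \eqref{eq:C(G)-action} of the $\CC(G)$-action on $\VV$, one rewrites
\[
 g \mapsto \int_G (\omega(gh) \varphi_1, \varphi_2) \overline{(\pi(h)v_1, v_2)} \, dh
 \quad \text{as} \quad
 g \mapsto (\pi(g)v_2, v_1'),
\]
where $v_1' = \pi(\bar f) v_1 \in \VV$ and $\bar f$ is the complex conjugate of $f$. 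This function is manifestly a matrix coefficient of $\pi$, hence belongs to $\AA(\pi)$.

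Let $X \subset \VV$ be the linear span of the vectors $v_1' = \pi(\bar f) v_1$ as $\varphi_1, \varphi_2 \in \SS(\X)$ and $v_1 \in \VV$ vary. I would check that $X$ is $\pi(G)$-stable: for $k_0 \in G$ one has $\pi(k_0) \pi(\bar f) v_1 = \pi(L_{k_0} \bar f) v_1$, where $L_{k_0}$ denotes left translation, and the unitarity of $\omega$ shows that $L_{k_0} \bar f$ is the complex conjugate of the matrix coefficient of $\omega$ obtained by replacing $\varphi_2$ with $\omega(k_0) \varphi_2$, returning $\pi(k_0) v_1'$ to $X$. On the other hand, evaluating the rewritten function at $g = e$ yields
\[
 \ZZ_{V,W,\chi_V,\chi_W,\psi}(\pi)(\varphi_1, \varphi_2, v_1, v_2) = (v_2, \pi(\bar f) v_1),
\]
so the hypothesis $\ZZ_{V,W,\chi_V,\chi_W,\psi}(\pi) \ne 0$ forces $X \ne 0$. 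Since $\pi$ is an irreducible Casselman--Wallach representation, it is topologically irreducible, and therefore $X$ is dense in $\VV$.

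To finish, I would use the joint continuity of the matrix coefficient map $\VV \times \VV \to \AA(\pi) \subset \CC(G)$ given by $(v, w) \mapsto [g \mapsto (\pi(g) w, v)]$, a standard consequence of Harish-Chandra's Schwartz-type estimates for discrete series. Combined with the density of $X$ in $\VV$ and the definition of $\AA(\pi)$ as the closure of the span of matrix coefficients, this shows that the functions $(\pi(g) v_2, v_1')$ with $v_2 \in \VV$ and $v_1' \in X$ span a dense subspace of $\AA(\pi)$, as required. The main technical obstacle is to confirm that $\pi(\bar f) v_1$ is a well-defined element of $\VV$ and that the matrix coefficient map is continuous into the Fr\'echet space $\CC(G)$; both follow from the moderate growth of $\pi$ and the Schwartz decay of matrix coefficients of discrete series.
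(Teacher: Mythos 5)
Your proposal is correct and is essentially the same as the paper's. The identity you derive, $I(g) = (\pi(g)v_2, v_1')$ with $v_1' = \pi(\bar f)v_1$, is precisely the paper's observation that $I = f_1 * f_2$ with $f_1(g) = (\omega(g)\varphi_1, \varphi_2) \in \CC(G)$ and $f_2(g) = (\pi(g)v_2, v_1) \in \AA(\pi)$, so that $f_1 * f_2 \in \AA(\pi)$; and both proofs use $\ZZ_{V,W,\chi_V,\chi_W,\psi}(\pi) \ne 0$ to see that the resulting subspace is nonzero, then invoke irreducibility to get density. The only point where you diverge is the final density step: you descend to density of $X = \operatorname{span}\{\pi(\bar f)v_1\}$ in $\VV$ via topological irreducibility of $\pi$ and then need the continuity of the matrix-coefficient map $\VV \times \VV \to \CC(G)$, whereas the paper avoids this extra input by noting that the span of the $f_1 * f_2$ is visibly a nonzero $G \times G$-invariant subspace of $\AA(\pi)$ and citing the irreducibility of $\AA(\pi)$ as a $G \times G$-representation (from Varadarajan). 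Your route is correct, but the continuity fact you invoke is worth verifying carefully or replacing by the paper's more direct $G \times G$-invariance observation, since it is not established elsewhere in the paper.
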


\begin{proof}
We may write the integral above as $(f_1 * f_2)(g)$, where
\[
 f_1(g) = (\omega(g) \varphi_1, \varphi_2), \quad
 f_2(g) = (\pi(g) v_2, v_1).
\]
Since $f_1 \in \CC(G)$ by Lemma \ref{l:matcoeff-weil} and $f_2 \in \AA(\pi)$, we have $f_1 * f_2 \in \AA(\pi)$.
Hence these functions span a $G \times G$-invariant subspace of $\AA(\pi)$.
Since $\ZZ_{V,W,\chi_V,\chi_W,\psi}(\pi) \ne 0$, this subspace is nonzero, so that its closure in $\CC(G)$ agrees with $\AA(\pi)$ by the irreducibility of $\AA(\pi)$.
This completes the proof.
\end{proof}

\subsection{Inductive step}

The only if part of Proposition \ref{p:key} is obvious.
For the if part, we proceed by induction on $\dim V - \dim W$ as in the proof of \cite[Proposition 5.4]{atobe}, where $V$ is fixed but $W$ and $\pi$ vary.
In particular, we use a seesaw diagram
\[
\begin{tikzcd}
 \U(W') \arrow[rd,dash] & \U(V) \times \U(V) \arrow[ld,dash] \\
 \U(W) \times \U(W^\perp) \arrow[u,dash] & \U(V) \arrow[u,dash]
\end{tikzcd},
\]
where $V$ is an $m$-dimensional Hermitian space over $\C$, $W'$ is an $(n+1)$-dimensional skew-Hermitian space over $\C$, $W$ is an $n$-dimensional skew-Hermitian subspace of $W'$, and $W^\perp$ is the orthogonal complement of $W$ in $W'$.
We take a datum $(\chi_V, \chi_{W'}, \psi)$, where $\chi_V$ and $\psi$ are as above, and $\chi_{W'}$ is a characters of $\C^\times$ given by
\[
 \chi_{W'}(z) = \left( \frac{z}{\sqrt{z \bar{z}}} \right)^{n'_0}
\]
for some integer $n'_0$ such that $n'_0 \equiv n+1 \bmod 2$.

\begin{lem}
\label{l:seesaw}
Let $\pi$ be an irreducible tempered representation of $\U(W)$.
Assume that $m > n$ and that there exists a discrete series representation $\pi'$ of $\U(W')$ such that 
\begin{enumerate}
\item 
\label{seesaw-1}
$\ZZ_{V,W',\chi_V,\chi_{W'},\psi}(\pi') \ne 0$;
\item 
\label{seesaw-2}
$\Hom_{\U(W)}(\pi', \pi) \ne 0$.
\end{enumerate}
Then we have $\ZZ_{V,W,\chi_V,\chi_W,\psi}(\pi) \ne 0$.
\end{lem}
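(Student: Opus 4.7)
The plan is to transfer nonvanishing on the $\U(W')$-side (hypothesis \eqref{seesaw-1}) to the $\U(W)$-side via the seesaw pair consisting of $\U(W') \supset \U(W) \times \U(W^\perp)$ and $\U(V) \times \U(V) \supset \U(V)$, using hypothesis \eqref{seesaw-2} combined with the Beuzart-Plessis result \cite{bp} and the density of projected matrix coefficients in $\AA(\pi')$ (from Lemma~\ref{l:proj-pi}) as the bridge.

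First, by \cite{bp} applied to \eqref{seesaw-2} and the temperedness of $\pi, \pi'$ (cf.~\eqref{eq:bp}), there exist $v'_1, v'_2 \in \pi'$ and $v_1, v_2 \in \pi$ with $\int_{\U(W)} (\pi'(g) v'_1, v'_2) \overline{(\pi(g) v_1, v_2)} \, dg \ne 0$. This implies that $\Lambda(f) = \int_{\U(W)} f(g) \overline{(\pi(g) v_1, v_2)}\,dg$ is a nonzero continuous functional on $\AA(\pi')$. Continuity requires that for $f \in \AA(\pi') \subset \CC(\U(W'))$, its restriction to $\U(W)$ be integrable against $\Xi_{\U(W)}$; this follows from the standard comparison $\Xi_{\U(W')}|_{\U(W)} \le C(1+\sigma_{\U(W)})^d \Xi_{\U(W)}$ combined with the Schwartz decay of $f$.

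Next, by hypothesis \eqref{seesaw-1} and Lemma~\ref{l:proj-pi}, the functions
\[
 F_{\varphi'_1, \varphi'_2, v''_1, v''_2}(g) = \int_{\U(W')} (\omega'(gh) \varphi'_1, \varphi'_2)\,\overline{(\pi'(h) v''_1, v''_2)} \, dh
\]
span a dense subspace of $\AA(\pi')$; hence $\Lambda(F) \ne 0$ for some choice of $\varphi'_i, v''_j$. By Fubini and a translation $\varphi'_1 \mapsto \omega'(h_0) \varphi'_1$ for an appropriate $h_0 \in \U(W')$, one obtains $\tilde\varphi_1, \tilde\varphi_2 \in \omega'$ satisfying
\[
 \int_{\U(W)} (\omega'(g) \tilde\varphi_1, \tilde\varphi_2)\,\overline{(\pi(g) v_1, v_2)} \, dg \ne 0. \qquad (\ast)
\]

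To conclude, the orthogonal decomposition $V \otimes W' = V \otimes W \oplus V \otimes W^\perp$ yields an $\U(V) \times \U(W) \times \U(W^\perp)$-equivariant isomorphism $\omega_{V,W',\chi_V,\chi_{W'},\psi} \cong \omega_{V,W,\chi_V,\chi_{W'},\psi} \hotimes \omega_{V,W^\perp,\chi_V,\chi_{W'},\psi}$, on which $\U(W)$ acts through the first factor only. The discrepancy between $\chi_W$ and $\chi_{W'}$ induces (via the formulas of \cite{kudla,hks}) a twist only on the $\U(V)$-side and hence disappears upon restriction to $\U(W)$, so matrix coefficients of $\omega_{V,W,\chi_V,\chi_{W'},\psi}$ and $\omega_{V,W,\chi_V,\chi_W,\psi}$ along $\U(W)$ agree. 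Approximating $\tilde\varphi_i$ by decomposable Fr\'echet tensors $\varphi_i \otimes \eta_i$ and using the separate continuity of $\ZZ_{V,W,\chi_V,\chi_W,\psi}(\pi)$, $(\ast)$ forces $\ZZ_{V,W,\chi_V,\chi_W,\psi}(\pi)(\varphi_1, \varphi_2, v_1, v_2) \ne 0$ for some $\varphi_1, \varphi_2 \in \SS(\X)$, as required. The main obstacle will be establishing the continuity of $\Lambda$ in the first step, which rests on precise comparison estimates for $\Xi$-functions of subgroups; the character bookkeeping in the seesaw step is a secondary technicality.
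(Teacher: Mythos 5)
Your proof is correct and follows essentially the same route as the paper's: Beuzart-Plessis's result gives $\LL(\pi',\pi)\ne 0$, Lemma~\ref{l:proj-pi} applied to hypothesis~(\ref{seesaw-1}) gives density of the projected matrix coefficients in $\AA(\pi')$, a Fubini argument reorganizes the double integral, and the factorization $\SS(\X')\cong \SS(\X)\hotimes\SS(\X^\perp)$ together with separate continuity finishes. The only bookkeeping differences are that the paper justifies the needed convergence and continuity by citing \cite[Lemma 6.5.1(i)]{bp} together with \cite[p.~356, Lemma 17]{va} rather than the pointwise comparison $\Xi_{\U(W')}|_{\U(W)}\lesssim (1+\sigma_{\U(W)})^d\,\Xi_{\U(W)}$ you assert (the cited lemmas are the safer reference), and that the paper leaves implicit your correct observation that the $\chi_W$ versus $\chi_{W'}$ discrepancy is invisible upon restriction to $\U(W)$.
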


\begin{proof}
Put $G = \U(W)$ and $G' = \U(W')$.
Let $\VV$ and $\VV'$ be the spaces of $\pi$ and $\pi'$, respectively.
Define a continuous map 
\[
 \LL(\pi', \pi) : \VV' \times \VV' \times \VV \times \VV \rightarrow \C
\]
by 
\[
 (v_1', v_2', v_1, v_2) \mapsto \int_G (\pi'(g) v_1',v_2') \overline{(\pi(g) v_1,v_2)} \, dg,
\]
where the integral above is absolutely convergent (see \cite[Lemma 6.5.1(i)]{bp}).
By \eqref{seesaw-2} and a result of Beuzart-Plessis \cite[Theorem 7.2.1]{bp}, we have $\LL(\pi', \pi) \ne 0$.
Hence by \eqref{seesaw-1} and Lemma \ref{l:proj-pi}, we have
\[
 \int_G \bigg( \int_{G'} (\omega'(g g') \varphi_1', \varphi_2') \overline{(\pi'(g')v_1', v_2')} \, dg' \bigg) \overline{(\pi(g) v_1, v_2)} \, dg \ne 0\]
for some $\varphi_1', \varphi_2' \in \SS(\X')$, $v_1', v_2' \in \VV'$, and $v_1, v_2 \in \VV$.
Here $\omega' = \omega_{V,W',\chi_V, \chi_{W'}, \psi}$ is the Weil representation of $G'$ on $\SS(\X')$, where $\X'$ is a maximal isotropic subspace of $V \otimes_\C W'$.
If we formally interchange the order of integration, then we have
\begin{equation}
\label{eq:nonvanish-2} 
 \int_G (\omega'(g g') \varphi_1', \varphi_2') \overline{(\pi(g) v_1, v_2)} \, dg \ne 0
\end{equation}
for some $g' \in G'$.

To justify the manipulation, we show that the double integral
\[
 \int_G \int_{G'} (\omega'(g g') \varphi_1', \varphi_2') \overline{(\pi'(g')v_1', v_2') (\pi(g) v_1, v_2)} \, dg' \, dg
\]
is absolutely convergent.
By Lemma \ref{l:matcoeff-weil}, the integral above is bounded by
\[
 C \int_G \int_{G'} \Xi_{G'}(gg') (1 + \sigma_{G'}(gg'))^{-r} \Xi_{G'}(g') \Xi_G(g) \, dg' \, dg 
\]
for some $C>0$ and $r \gg 0$.
By \cite[p.~356, Lemma 17]{va}, we have
\[
 \int_{G'} \Xi_{G'}(gg') (1 + \sigma_{G'}(gg'))^{-r} \Xi_{G'}(g') \, dg' \le C' \Xi_{G'}(g) 
\]
for some $C'>0$.
Hence the absolute convergence follows from \cite[Lemma 6.5.1(i)]{bp}.

We may assume that $\X' = \X \oplus \X^\perp$, where $\X^\perp$ is a maximal isotropic subspace of $V \otimes_\C W^\perp$.
Then we have an identification
\[
 \SS(\X') = \SS(\X) \hotimes \SS(\X^\perp),
\]
where $\hotimes$ denotes the projective tensor product, such that 
\[
 \omega'(g) (\varphi \otimes \varphi^\perp) =  (\omega(g) \varphi) \otimes \varphi^\perp
\]
for $g \in G$, $\varphi \in \SS(\X)$, and $\varphi^\perp \in \SS(\X^\perp)$.
On the other hand, it follows from the argument in the proof of \cite[Lemma 6.2]{li89} that the map
\[
 (\varphi', \varphi'') \mapsto 
 \int_G (\omega'(g) \varphi', \varphi'') \overline{(\pi(g) v_1, v_2)} \, dg
\]
on $\SS(\X') \times \SS(\X')$ is separately continuous.
From this and \eqref{eq:nonvanish-2}, we can deduce that
\[
 \int_G (\omega(g) \varphi_1, \varphi_2) \overline{(\pi(g) v_1, v_2)} \, dg \ne 0
\]
for some $\varphi_1, \varphi_2 \in \SS(\X)$, so that $\ZZ_{V,W,\chi_V,\chi_W,\psi}(\pi) \ne 0$.
This completes the proof.
\end{proof}

Fix an $m$-dimensional Hermitian space $V$ over $\C$ and a character $\chi_V$ of $\C^\times$ such that $\chi_V|_{\R^\times} = \omega_{\C/\R}^m$.
Let $(r,s)$ be the signature of $V$.
For any $n$-dimensional skew-Hermitian space $W$ over $\C$ and any irreducible tempered representation $\pi$ of $\U(W)$, we define integers $k_\pi, r_\pi, s_\pi$ as in \S \ref{ss:atobe-def} with respect to $k_0$ and $\chi_V$, where $k_0 = -1$ or $0$ is determined by
\[
 m \equiv n + k_0 \bmod 2. 
\]
By Remarks \ref{r:atobe} and \ref{r:Z}, we may assume that $r-r_\pi \ge s - s_\pi$, so that
\begin{equation}
\label{eq:(r,s)}
 (r,s) = 
 \begin{cases}
  (r_\pi + l + 2t + 1, s_\pi + l) & \text{if $k_\pi = -1$;} \\
  (r_\pi + l + 2t, s_\pi + l) & \text{if $k_\pi \ge 0$}
 \end{cases}
\end{equation}
for some integers $l,t$ with $t \ge 0$.

\begin{lem}
\label{l:induction-disc}
Let $\pi$ be a discrete series representation of $\U(W)$ such that $k_\pi, r_\pi, s_\pi$ satisfy \eqref{eq:(r,s)} for some integers $l,t$ with
\[
\begin{cases}
 l \ge 0, t \ge 0 & \text{if $k_\pi = -1$;} \\
 l \ge 0, t \ge 1 & \text{if $k_\pi = 0$;} \\
 l \ge k_\pi, t \ge 0 & \text{if $k_\pi \ge 1$}
\end{cases}
\]
(and hence $m>n$).
Assume that $\theta_{V,W,\chi_V,\chi_W,\psi}(\pi) \ne 0$.
Then there exist an $(n+1)$-dimensional skew-Hermitian space $W'$ over $\C$ containing $W$ and a discrete series representation $\pi'$ of $\U(W')$ such that
\begin{itemize}
\item $\theta_{V,W',\chi_V,\chi_{W'},\psi}(\pi') \ne 0$;
\item $\Hom_{\U(W)}(\pi', \pi) \ne 0$;
\item $k_{\pi'}, r_{\pi'}, s_{\pi'}$ satisfy the following conditions:
\begin{itemize}
\item 
if $k_\pi = -1$, then $k_{\pi'} = 0$ and
\[
 (r,s) = (r_{\pi'} + l + 2t, s_{\pi'} + l);
\]
\item 
if $k_\pi = 0$, then $k_{\pi'} = -1$ and
\[
 \text{$(r,s) = (r_{\pi'} + l + 2(t-1) + 1, s_{\pi'} + l)$ or $(r_{\pi'} + (l-1) + 2t + 1, s_{\pi'} + (l-1))$,}
\]
where the second case happens only if $l \ge 1$;
\item
if $k_\pi \ge 1$, then $k_{\pi'} = k_\pi-1$ and 
\[
 (r,s) = (r_{\pi'} + (l-1) + 2t, s_{\pi'} + (l-1)).
\]
\end{itemize}
\end{itemize}
\end{lem}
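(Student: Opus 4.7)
The plan is to strengthen the inductive step used by Atobe in the proof of his nonvanishing criterion \cite[Theorem 4.2]{atobe}, which produces a tempered representation $\pi'$ of $\U(W')$ satisfying the Hom and theta-nonvanishing conditions; here $\pi'$ is additionally required to be a discrete series representation whose invariants $k_{\pi'}$, $r_{\pi'}$, $s_{\pi'}$ shift in the prescribed way relative to those of $\pi$. Combined with Lemma \ref{l:seesaw}, this lemma supplies the inductive step needed for the proof of Proposition \ref{p:key} and hence ultimately of Proposition \ref{p:intro}.

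I would write $\pi = \pi(\phi, \eta)$ with $\phi$ a discrete series $L$-parameter for $\U_n$, so that $\phi$ is a direct sum of pairwise distinct conjugate-selfdual characters of $\C^\times$ with sign $(-1)^{n-1}$. The candidate $\phi'$ for $\pi'$ is obtained by adjoining a single further character, $\phi' = \phi \oplus \chi$, where $\chi$ is a conjugate-selfdual character of $\C^\times$ with sign $(-1)^n$. The half-integer parameter of $\chi$ is chosen case-by-case on $k_\pi$, in a way that shifts the $k$-invariant to the target value ($k_{\pi'} = 0$ if $k_\pi = -1$; $k_{\pi'} = -1$ if $k_\pi = 0$; $k_{\pi'} = k_\pi - 1$ if $k_\pi \ge 1$); the extension $\eta'$ of $\eta$ to $S_{\phi'}$ and the sign of the one-dimensional skew-Hermitian complement $W' \ominus W$ are then determined by the Kaletha parametrization so that $\pi' = \pi(\phi', \eta')$ is a discrete series representation on the correct unitary group. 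Since by construction the characters appearing in $\phi'$ are still pairwise distinct, $\pi'$ is genuinely a discrete series representation and not merely a limit.

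Having constructed $\pi'$, I would verify the invariant identities in the lemma directly from the definitions in \S \ref{ss:atobe-def}, and then apply Theorem \ref{t:atobe} to conclude that $\theta_{V,W',\chi_V,\chi_{W'},\psi}(\pi') \ne 0$. The hypothesis $\theta_{V,W,\chi_V,\chi_W,\psi}(\pi) \ne 0$ enters here via the constraints it imposes on $\XX_\pi$ and on the sets $\CC_\pi^\pm(l+t)$; these constraints transfer, with the reindexing forced by the insertion of $\chi$, to the corresponding constraints on $\XX_{\pi'}$ and $\CC_{\pi'}^\pm$ for each branch of the three-case disjunction in the statement of the lemma.

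The main obstacle is verifying $\Hom_{\U(W)}(\pi', \pi) \ne 0$, which I would obtain from the archimedean local Gan--Gross--Prasad conjecture for unitary groups, now a theorem of Xue \cite{xue} building on earlier work of Beuzart-Plessis \cite{bp}. GGP determines, in terms of a product of local root numbers attached to $\phi$ and $\phi'$, both the sign of $W' \ominus W$ and the pair of characters on $S_\phi \times S_{\phi'}$ for which the Hom space is nonzero. The technical heart of the argument is to check, in each of the three cases, that the $\chi$ chosen above together with the induced choice of $W' \ominus W$ and $\eta'$ is consistent with this GGP prescription for the given $\eta$; this reduces to a collection of sign identities in the $\epsilon_{\kappa_i} = \eta(\tilde{e}_{\kappa_i})$. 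The analogous verification for a merely tempered $\pi'$ is carried out in \cite[Proposition 5.4]{atobe}, and the adaptation to the discrete series setting should be a matter of careful case-work tracking how the insertion of a single character affects the GGP sign and the various invariants of \S \ref{ss:atobe-def} in parallel.
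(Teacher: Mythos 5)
Your proposal takes a parallel route to the paper (which simply cites \cite[Lemmas 5.1, 5.2, 5.3]{atobe} and sketches the $k_\pi=0$ case), trying to construct $\pi'$ explicitly from an $L$-parameter $\phi'$ and verifying the GGP condition directly. The overall strategy is reasonable, but the specific $L$-parameter you propose is impossible, and this is not a detail that can be patched.

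You set $\phi' = \phi \oplus \chi$, where $\phi = \chi_{\kappa_1}\oplus\cdots\oplus\chi_{\kappa_n}$ is the discrete series $L$-parameter of $\pi$ on $\U_n$ (so $\kappa_i \in \Z + \tfrac{n-1}{2}$, i.e.\ each $\chi_{\kappa_i}$ is conjugate-selfdual with sign $(-1)^{n-1}$), and $\chi$ has sign $(-1)^n$. But a discrete series $L$-parameter for $\U_{n+1}$ must be a sum of $n+1$ characters $\chi_{\kappa'_i}$ with $\kappa'_i \in \Z + \tfrac{n}{2}$, all conjugate-selfdual with sign $(-1)^n$. Since $(-1)^{n-1} \ne (-1)^n$ and the cosets $\Z + \tfrac{n-1}{2}$ and $\Z + \tfrac{n}{2}$ are disjoint, \emph{no} summand of $\phi$ may appear in $\phi'$. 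Thus $\phi\oplus\chi$ is not a discrete series $L$-parameter for $\U_{n+1}$ at all, and the representation $\pi'$ you describe does not exist. The same parity shift is, incidentally, exactly why $k_{\pi'}$ must jump between the sets $\{-1\}$ and $\{0,1,2,\dots\}$ as $n$ changes by one.

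The correct construction (Atobe's Lemmas 5.1--5.3, which the paper invokes) builds $\phi'$ entirely from fresh characters: one places most of the half-integer parameters at values $l+t \ll \beta_0 < \beta_1 < \cdots$ far from the "active" region near $0$, in the new parity lattice $\Z + \tfrac{n}{2}$, while a bounded number of parameters and their signs $\eta'(\tilde e_{\kappa'_i})$ are fixed by the constraints. The freedom in choosing the $\beta_i$ large is what lets one simultaneously (a) realize the local GGP epsilon-factor condition forcing $\Hom_{\U(W)}(\pi',\pi)\ne 0$, (b) achieve the prescribed $k_{\pi'}$, $(r_{\pi'},s_{\pi'})$, and (c) control $\# \CC^\pm_{\pi'}(l+t-1)$ so that Theorem \ref{t:atobe} still gives $\theta_{V,W',\chi_V,\chi_{W'},\psi}(\pi')\ne 0$. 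Your outline correctly identifies all three of these goals and the GGP sign bookkeeping that drives them; the missing idea is that $\phi'$ must be manufactured from scratch in the opposite parity class, not by adjoining a character to $\phi$.
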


\begin{proof}
The assertion was essentially proved by Atobe (see Lemmas 5.1, 5.2, 5.3 of \cite{atobe} in the cases $k_\pi = -1, k_\pi = 0, k_\pi \ge 1$, respectively). 
We only give some details in the case $k_\pi = 0$.
Since $\theta_{V,W,\chi_V,\chi_W,\psi}(\pi) \ne 0$, we have $\# \CC_\pi^\pm(l+t) \le l$ by Theorem \ref{t:atobe}.
Let $W'$ be the skew-Hermitian space over $\C$ of signature $(p+1,q)$, where $(p,q)$ is the signature of $W$.
Then by \cite[Lemma 5.2]{atobe}, there exists a discrete series representation $\pi'$ of $\U(W')$ (relative to the choice of integers $l+t \ll \beta_0 < \beta_1 < \cdots$) satisfying the following conditions:
\begin{itemize}
\item $\Hom_{\U(W)}(\pi', \pi) \ne 0$;
\item $k_{\pi'} = -1$;
\item $(r_{\pi'}, s_{\pi'}) =
\begin{cases}
 (r_\pi + 1, s_\pi) & \text{if $(\frac{1}{2}, +1)$ and $(-\frac{1}{2}, -1)$ do not belong to $\XX_\pi$;} \\
 (r_\pi, s_\pi + 1) & \text{if $(\frac{1}{2}, +1)$ or $(-\frac{1}{2}, -1)$ belongs to $\XX_\pi$;}
\end{cases}
$
\item $\# \CC^\pm_{\pi'}(l+t-1) \le 
\begin{cases}
 l & \text{if $(\frac{1}{2}, +1)$ and $(-\frac{1}{2}, -1)$ do not belong to $\XX_\pi$;} \\
 l-1 & \text{if $(\frac{1}{2}, +1)$ or $(-\frac{1}{2}, -1)$ belongs to $\XX_\pi$.}
\end{cases}
$
\end{itemize}
Note that if $(\frac{\epsilon}{2}, \epsilon) \in \XX_\pi$ for some $\epsilon = \pm 1$, then $(\frac{\epsilon}{2}, \epsilon) \in \XX_\pi^{(\infty)}$.
Since $t \ge 1$, this implies that $\# \CC_\pi^\epsilon(t) \ge 1$ and hence $l \ge 1$.
By Theorem \ref{t:atobe} again, the conditions above imply that $\theta_{V,W',\chi_V,\chi_{W'},\psi}(\pi') \ne 0$.
This completes the proof.
\end{proof}

\begin{lem}
\label{l:induction-temp}
Let $\pi$ be an irreducible tempered representation of $\U(W)$ such that $k_\pi, r_\pi, s_\pi$ satisfy \eqref{eq:(r,s)} for some integers $l,t$ with $t \ge 0$.
Assume that $m>n$ and $\theta_{V,W,\chi_V,\chi_W,\psi}(\pi) \ne 0$.
Then there exist an $(n+1)$-dimensional skew-Hermitian space $W'$ over $\C$ containing $W$ and a discrete series representation $\pi'$ of $\U(W')$ such that
\begin{itemize}
\item $\theta_{V,W',\chi_V,\chi_{W'},\psi}(\pi') \ne 0$;
\item $\Hom_{\U(W)}(\pi', \pi) \ne 0$.
\end{itemize}
\end{lem}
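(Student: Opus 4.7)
The plan is to reduce this to Lemma \ref{l:induction-disc} by parabolic descent, then to lift the resulting discrete series from a smaller group back up to $\U(W')$.

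First I would write $\pi = I(\xi_1, \dots, \xi_d, \pi_0)$ as in \eqref{eq:temp}, where $\pi_0$ is a (limit of) discrete series of $\U(W_0)$ with $W_0 \subset W$ of dimension $n - 2d$. Since $m > n$, Corollary \ref{c:jacquet} applies with no restriction on the parabolically inducing character, so iterating $d$ times gives that $\theta_{V_0,W_0,\chi_V,\chi_{W_0},\psi}(\pi_0)$ is nonzero, where $V_0 \subset V$ is the Hermitian subspace of signature $(r-d, s-d)$. If $\pi_0$ is a limit of discrete series but not a discrete series, its $L$-parameter contains a character appearing with multiplicity at least two; Corollary \ref{c:jacquet} applies once more to this repeated character (the multiplicity condition for the conjugate-selfdual case is then automatic), and one descends until the reduced representation $\sigma_0$ is a genuine discrete series. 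Applying Lemma \ref{l:induction-disc} to $\sigma_0$ then produces a discrete series $\sigma'_0$ on the next-larger unitary group satisfying the analogous nonvanishing and branching properties.

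The final step is to lift $\sigma'_0$ to a discrete series $\pi'$ of $\U(W')$ with $\dim W' = n+1$. The naive candidate, a parabolic induction from $\sigma'_0$ together with the $\xi_i$, is only tempered, never a discrete series. Instead I would specify $\pi'$ by choosing its $L$-parameter $\phi'$ directly: take $\phi'$ to be a discrete series parameter for $\U_{n+1}$ formed by augmenting $\phi_{\sigma'_0}$ with auxiliary distinct conjugate-selfdual characters of sign $(-1)^n$, chosen to interleave appropriately with the $\chi_{\kappa_i}$ and $\chi_{\mu_j}$ appearing in $\phi_\pi$. The character $\eta' \in \widehat{S}_{\phi'}$ is then determined by the Gan--Gross--Prasad recipe, available through \cite{bp,xue}, to ensure $\Hom_{\U(W)}(\pi', \pi) \ne 0$.

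The main obstacle is to arrange these auxiliary characters so that the resulting pair $(\phi', \eta')$ simultaneously satisfies Atobe's criterion (Theorem \ref{t:atobe}) ensuring $\theta_{V,W',\chi_V,\chi_{W'},\psi}(\pi') \ne 0$. This amounts to a case analysis by $k_\pi$ and by the parities of the multiplicities $m_i, n_j$ in the parameter of $\pi$, paralleling the three-case analysis in \cite[\S 5]{atobe} that underlies Lemma \ref{l:induction-disc}. In the edge case $k_\pi = 0$ with $t = 0$ (so $l \ge 1$), which is not directly covered by Lemma \ref{l:induction-disc}, one must modify the construction by exploiting the flexibility in the choice of $W'$ and the freedom to place the auxiliary characters, using the fact that we only need existence of $\pi'$ rather than any specific form for its invariants $k_{\pi'}, r_{\pi'}, s_{\pi'}$.
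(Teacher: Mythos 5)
There is a genuine gap. Your descend--then--reconstruct strategy differs from the paper's route, and the reconstruction step---which is the real content of the lemma---is only sketched, not carried out.

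The paper's proof is a direct application of Atobe's Lemmas 5.1, 5.2, 5.3, which already take a \emph{tempered} $\pi$ as input (the invariants $k_\pi$, $r_\pi$, $s_\pi$, $\XX_\pi$, $\CC_\pi^\pm$ of \S\ref{ss:atobe-def} are defined for tempered representations), and construct a discrete series $\pi'$ on $\U(W')$ with $\dim W' = n+1$ directly, with the choice of auxiliary distinct characters $\beta_0 < \beta_1 < \cdots$ and the interlacing and $\epsilon$-factor bookkeeping built into those lemmas. No descent is needed, and the $k_\pi = 0$, $t=0$ case causes no trouble because Lemma~\ref{l:induction-temp} only asserts existence of $\pi'$, not the specific invariants of $\pi'$ that Lemma~\ref{l:induction-disc} records (and which force $t \ge 1$ there).

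Your route instead peels off the $\xi_i$ via Corollary~\ref{c:jacquet} to reach a discrete series $\pi_0$ of $\U(W_0)$ with $\dim W_0 = n-2d$, applies Lemma~\ref{l:induction-disc} to get $\pi_0'$ on a group of dimension $n-2d+1$, and then wants to refill $2d$ dimensions to produce $\pi'$ on $\U(W')$. But at that point the data you carry---$\theta_{V_0,W_0'}(\pi_0') \neq 0$ and $\Hom_{\U(W_0)}(\pi_0',\pi_0) \neq 0$---does not transfer: $\theta_{V,W'}$ is with respect to the full $V$ and a group $2d$ dimensions larger, and $\Hom_{\U(W)}(\pi',\pi)$ with $\pi = I(\xi_1,\ldots,\xi_d,\pi_0)$ is a branching for the full $\U(W)$, not for $\U(W_0)$. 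You acknowledge this by saying one should pick $\phi'$ ``formed by augmenting $\phi_{\sigma_0'}$ with auxiliary distinct conjugate-selfdual characters $\ldots$ chosen to interleave appropriately'' and then ``determine $\eta'$ by the Gan--Gross--Prasad recipe'' and then check Atobe's criterion. That is precisely the content of Atobe's Lemmas 5.1--5.3, and you have not verified that a simultaneous solution exists; you explicitly flag it as ``the main obstacle'' and defer it to a case analysis. You also flag the case $k_\pi = 0$, $t = 0$ as ``not directly covered by Lemma~\ref{l:induction-disc}'' and only say ``one must modify the construction,'' again without the modification. In the end, the descent step is essentially vestigial: once you commit to building $\phi'$ and $\eta'$ from scratch to satisfy both the GGP branching and Atobe's nonvanishing criterion, you have reverted to the direct construction, and the detour through $\sigma_0$ and $\sigma_0'$ does not shorten it. The honest proof is to invoke Atobe's Lemmas 5.1--5.3 directly on the tempered $\pi$ (possibly spelling out how their hypotheses hold under the assumptions of the lemma), as the paper does.
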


\begin{proof}
As in Lemma \ref{l:induction-disc}, we can deduce the assertion from Theorem \ref{t:atobe} and \cite[Lemmas 5.1, 5.2, 5.3]{atobe}.
We omit the details.
\end{proof}

We now prove Proposition \ref{p:key}.
Let $\pi$ be an irreducible tempered representation of $\U(W)$.
If $m=n$ or $n+1$, then Proposition \ref{p:key} was proved in \cite[Proposition B.4.1]{hls}, \cite[Proposition 11.5(ii)]{gqt}.
(Note that these cases were used by Xue \cite{xue} in the proof of the Gan--Gross--Prasad conjecture.)
Thus we assume that $m \ge n+2$.
Then we need to show that if $\theta_{V,W,\chi_V,\chi_W,\psi}(\pi) \ne 0$, then $\ZZ_{V,W,\chi_V,\chi_W,\psi}(\pi) \ne 0$.

As above, we may assume that $k_\pi, r_\pi, s_\pi$ satisfy \eqref{eq:(r,s)} for some integers $l,t$ with $t \ge 0$.
By Lemmas \ref{l:seesaw} and \ref{l:induction-temp}, we are reduced to the case when $\pi$ is a discrete series representation.
In this case, it follows from Theorem \ref{t:atobe} that if $\theta_{V,W,\chi_V,\chi_W,\psi}(\pi) \ne 0$, then 
\[
\begin{cases}
 l \ge 0, t \ge 0 & \text{if $k_\pi = -1$;} \\
 l \ge 0, t \ge 0 & \text{if $k_\pi = 0$;} \\
 \text{$l \ge 0, t = 0$ or $l \ge k_\pi, t \ge 1$} & \text{if $k_\pi \ge 1$.}
\end{cases}
\]

We first consider the case $k_\pi = -1$ or $0$ (and hence $l \ge 0, t \ge 0$).
By Lemmas \ref{l:seesaw} and \ref{l:induction-disc}, and an induction on $l+t$, we are reduced to the case $k_\pi = 0, l \ge 0, t = 0$.
In this case, the assertion will be proved in Lemma \ref{l:base} below.

We next consider the case $k_\pi \ge 1$.
If $t \ge 1$ (and hence $l \ge k_\pi$), then by Lemmas \ref{l:seesaw} and \ref{l:induction-disc}, and an induction on $k_\pi$, we are reduced to the case $k_\pi = 0, l \ge 0, t \ge 1$.
But this case has already been treated above.
If $t = 0$ (and hence $l \ge \frac{k_\pi}{2}$), then the assertion will be proved in Lemma \ref{l:base} below.

\subsection{Base step}

We continue with the setup of the previous subsection.
To finish the proof of Proposition \ref{p:key}, it remains to prove the following.

\begin{lem}
\label{l:base}
Let $\pi$ be a discrete series representation of $\U(W)$.
Assume that $k_\pi \ge 0$ and 
\[
 (r,s) = (r_\pi + l, s_\pi + l)
\]
for some integer $l \ge \frac{k_\pi}{2}$ (and hence $m \ge n$).
Then we have $\ZZ_{V,W,\chi_V,\chi_W,\psi}(\pi) \ne 0$.
\end{lem}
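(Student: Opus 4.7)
The plan is to prove Lemma~\ref{l:base} by induction on $m - n = 2l - k_\pi \ge 0$. The base cases $m - n \in \{0, 1\}$---which occur exactly when $k_\pi = 0, l = 0$ or $k_\pi = 1, l = 1$---are handled by \cite[Proposition B.4.1]{hls} and \cite[Proposition 11.5(ii)]{gqt}: under our hypotheses Theorem~\ref{t:atobe} guarantees $\theta_{V,W,\chi_V,\chi_W,\psi}(\pi) \ne 0$, and those references then yield the matching nonvanishing of $\ZZ_{V,W,\chi_V,\chi_W,\psi}(\pi)$.

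For the inductive step, assume $m - n \ge 2$, so $l \ge 1$ and $r, s \ge 1$. I would decompose $V = V_0 \oplus H$, where $H$ is a two-dimensional Hermitian space of signature $(1, 1)$ and $V_0$ has signature $(r - 1, s - 1)$. Since the invariants $k_\pi, r_\pi, s_\pi$ depend only on $\pi$, $\chi_V$ and the parity of $\dim V$ (through $k_0$), replacing $V$ by $V_0$ leaves them unchanged, so Theorem~\ref{t:atobe} gives $\theta_{V_0, W, \chi_V, \chi_W, \psi}(\pi) \ne 0$, and the inductive hypothesis supplies $\varphi_1^0, \varphi_2^0, v_1, v_2$ realizing $\ZZ_{V_0, W, \chi_V, \chi_W, \psi}(\pi) \ne 0$. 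Taking $\chi_H = \1$ (permitted since $\dim H$ is even) gives a factorization
\[
 \omega_{V, W, \chi_V, \chi_W, \psi} \;\cong\; \omega_{V_0, W, \chi_V, \chi_W, \psi} \hotimes \omega_{H, W, \1, \chi_W, \psi}
\]
as $\U(W)$-representations, so the matrix coefficient of $\omega_{V, W}$ on factored Schwartz vectors is the pointwise product of those of $\omega_{V_0, W}$ and $\omega_{H, W}$. I would then choose $\varphi_1^H, \varphi_2^H$ in the Fock model of $\omega_{H, W}$ so that $g \mapsto (\omega_{H, W}(g) \varphi_1^H, \varphi_2^H)$ approximates a Dirac mass at $1 \in \U(W)$ in an averaged sense, forcing $\ZZ_{V, W, \chi_V, \chi_W, \psi}(\pi)$ to reproduce a nonzero multiple of $\ZZ_{V_0, W, \chi_V, \chi_W, \psi}(\pi)$ in the limit.

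The main obstacle lies in the analytic justification of this concentration step. By Lemma~\ref{l:matcoeff-weil}, matrix coefficients of $\omega_{H, W}$ lie in the Harish-Chandra Schwartz space $\CC(\U(W))$ but are never compactly supported, so the $\delta$-approximation has to be produced intrinsically---likely by dilating the Gaussian vacuum inside the Fock model---and the exchange of the concentration limit with the $\U(W)$-integration must be controlled with Sun's matrix-coefficient bounds together with Lemma~\ref{l:matcoeff-weil}, ensuring absolute convergence throughout. Should this path prove unwieldy, a fallback is to use the see-saw pair $(\U(V) \times \U(V), \U(W \oplus (-W)))$ versus $(\U(V), \U(W) \times \U(-W))$, reinterpreting $\ZZ_{V, W, \chi_V, \chi_W, \psi}(\pi)$ as a Hermitian pairing on a theta lift of $\pi \boxtimes \bar\pi$ to $\U(V) \times \U(V)$, whose nonvanishing would then follow from Howe duality together with the nonvanishing of $\theta_{V, W, \chi_V, \chi_W, \psi}(\pi)$ guaranteed by Theorem~\ref{t:atobe}.
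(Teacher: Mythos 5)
Your inductive framework (base cases $m-n\in\{0,1\}$, then step down to $V_0$ of signature $(r-1,s-1)$) is coherent as a skeleton, but the inductive step as you describe it has a genuine gap, and the paper does something entirely different.

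The key problem is the ``$\delta$-approximation.'' Matrix coefficients $g\mapsto(\omega_{H,W}(g)\varphi_1^H,\varphi_2^H)$ with $H$ split of signature $(1,1)$ are not arbitrary elements of $\CC(\U(W))$: as $\U(W)$-representations, $\omega_{H,W}$ decomposes into a direct sum/integral of very specific (degenerate principal series-type) constituents, so these matrix coefficients span a small, non-dense subspace of $\CC(\U(W))$. Dilating the Fock/Gaussian vacuum does not escape this subspace, and there is no mechanism to make such functions concentrate at $g=1$; in the Schr\"odinger model on $\X_H\cong W$, concentrating $\varphi_\epsilon$ near $w=0$ does not cause $(\omega(g)\varphi_\epsilon,\varphi_\epsilon)$ to peak at $g=1$, since $g\cdot 0 = 0$. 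Moreover, even granting a Dirac sequence, the limit of the integral would yield the value of $(\omega_{V_0,W}(g)\varphi_1^0,\varphi_2^0)\overline{(\pi(g)v_1,v_2)}$ at $g=1$, not the integral $\ZZ_{V_0,W,\chi_V,\chi_W,\psi}(\pi)$ supplied by the inductive hypothesis, so the argument does not close even formally. Your fallback via the doubled see-saw is essentially circular: the delicate content of the whole section (Proposition \ref{p:key}) is precisely that $\theta_{V,W,\chi_V,\chi_W,\psi}(\pi)\ne 0$ implies $\ZZ_{V,W,\chi_V,\chi_W,\psi}(\pi)\ne 0$ \emph{for the given $V$}, and Howe duality alone does not give this---the previously known result (\cite[Proposition 11.5]{gqt}) only produces nonvanishing of $\ZZ_{V',W,\chi_V,\chi_W,\psi}(\pi)$ for \emph{some} $V'$ of the same dimension.

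The paper's actual proof is not an induction and uses a fundamentally different decomposition: $V = V_1\oplus V_2$ with $V_1$ of signature $(r,0)$ and $V_2$ of signature $(0,s)$, so that $\U(V_1)\times\U(V_2)$ is the maximal compact $K'$ of $\U(V)$. Via \eqref{eq:matcoeff-weil}, matrix coefficients of $\omega$ on factorable vectors split into products, and the lowest-$K$-type/joint-harmonics analysis (Lemmas \ref{l:K-type-corresp} and \ref{l:lkt->mindeg}) identifies the relevant summands with unitary highest weight modules $\pi_1,\pi_2$ realized on Fock spaces. The nonvanishing then comes from Li's theorem \cite[Theorem 4.1]{li90}: the integral of the triple product of Flensted-Jensen functions $\Psi_1\Psi_2\bar\Psi$ over $\U(W)$ is nonzero, and since each $\Psi_i$ and $\Psi$ is a linear combination of matrix coefficients, this forces $\ZZ_{V,W,\chi_V,\chi_W,\psi}(\pi)\ne 0$. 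This structured use of the compact-versus-noncompact split and Li's explicit computation is the input that your soft-analytic approach cannot replace.
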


This lemma is an immediate consequence of a result of Li \cite[Theorem 4.1]{li90}, but we include some details for the convenience of the reader.
Note that by Theorem \ref{t:atobe}, the assumption automatically implies that $\theta_{V,W,\chi_V,\chi_W,\psi}(\pi) \ne 0$.

To prove Lemma \ref{l:base}, we need the notion of $K$-types of minimal degrees introduced by Howe \cite{howe2}.
Let $(p,q)$ be the signature of $W$.
We take the maximal compact subgroup $K \cong \U(p) \times \U(q)$ of $\U(W) = \U(p,q)$ as in \S \ref{ss:real-notation} and parametrize the irreducible representations of $K$ by highest weights
\[
 (a_1, \dots, a_p; b_1, \dots, b_q),
\]
where 
\begin{itemize}
\item $a_i, b_j \in \Z$;
\item $a_1 \ge \dots \ge a_p$ and $b_1 \ge \dots \ge b_q$.
\end{itemize}
Similarly, we take the maximal compact subgroup $K' \cong \U(r) \times \U(s)$ of $\U(V) = \U(r,s)$ and parametrize the irreducible representations of $K'$.

Let $\mathcal{P} = \bigoplus_{d=0}^\infty \mathcal{P}_d$ be the Fock model of the Weil representation $\omega_{V,W,\chi_V,\chi_W,\psi}$ of $\U(W) \times \U(V)$ relative to the datum $(\chi_V,\chi_W,\psi)$ given in \S \ref{ss:main-theorem-setup}, where $\mathcal{P}$ is the space of polynomials in $mn$ variables and $\mathcal{P}_d$ is the subspace of homogeneous polynomials of degree $d$.
Note that $\mathcal{P}_d$ is invariant under the action of $K \times K'$.
For any irreducible representation $\mu$ of $K$ occurring in $\mathcal{P}$, we define the $(r,s)$-degree of $\mu$ as the smallest nonnegative integer $d$ such that the $\mu$-isotypic component of $\mathcal{P}_d$ is nonzero, which depends only on $r-s$ (see \cite[Lemma 1.4.5]{paul1}).

Let $\mathcal{H}$ be the space of joint harmonics, which is a $K \times K'$-invariant subspace of $\mathcal{P}$.
For any irreducible representations $\mu$ and $\mu'$ of $K$ and $K'$, respectively, we say that $\mu$ and $\mu'$ correspond if $\mu \boxtimes \mu'$ occurs in $\mathcal{H}$, in which case $\mu$ and $\mu'$ determine each other.
This correspondence can be described as follows.

\begin{lem}
\label{l:K-type-corresp}
Let $\mu$ and $\mu'$ be irreducible representations of $K$ and $K'$, respectively.
Then $\mu$ and $\mu'$ correspond if and only if $\mu$ and $\mu'$ are of the form
\begin{align*}
 \mu & = (a_1, \dots, a_{p^+}, 0, \dots, 0, b_1, \dots, b_{p^-}; c_1, \dots, c_{q^+}, 0, \dots, 0, d_1, \dots, d_{q^-}) \\
 & + \bigg( \frac{r-s}{2}, \dots, \frac{r-s}{2}; \frac{s-r}{2}, \dots, \frac{s-r}{2} \bigg) + \bigg( \frac{m_0}{2}, \dots, \frac{m_0}{2} \bigg)
\end{align*}
and 
\begin{align*}
 \mu' & = (a_1, \dots, a_{p^+}, 0, \dots, 0, d_1, \dots, d_{q^-}; c_1, \dots, c_{q^+}, 0, \dots, 0, b_1, \dots, b_{p^-}) \\
 & + \bigg( \frac{p-q}{2}, \dots, \frac{p-q}{2}; \frac{q-p}{2}, \dots, \frac{q-p}{2} \bigg) + \bigg( \frac{n_0}{2}, \dots, \frac{n_0}{2} \bigg),
\end{align*}
where
\begin{itemize}
\item $a_i, b_j, c_k, d_l \in \Z$;
\item $a_1 \ge \dots \ge a_{p^+} > 0 > b_1 \ge \dots \ge b_{p^-}$ and $c_1 \ge \dots \ge c_{q^+} > 0 > d_1 \ge \dots \ge d_{q^-}$;
\item $p^+ + p^- \le p$ and $q^+ + q^- \le q$;
\item $p^+ + q^- \le r$ and $p^- + q^+ \le s$.
\end{itemize}
\end{lem}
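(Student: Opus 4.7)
The plan is to deduce this lemma directly from the classical decomposition of the Fock model of $\omega_{V,W,\chi_V,\chi_W,\psi}$ under the action of the compact dual pair $(K, K')$, together with a careful tracking of the character twists.

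First I would realize $\omega_{V,W,\chi_V,\chi_W,\psi}$ in the Fock model. With our choice $\psi(x) = e^{-2 \pi \sqrt{-1} x}$, the Cartan involutions of $\U(V)$ and $\U(W)$ are compatible with the complex structure on the Fock space in such a way that, as an underlying $K \times K'$-module, $\mathcal{P}$ becomes
\[
 \C[M_{r,p}] \otimes \C[M_{s,q}] \otimes \C[M_{p,s}] \otimes \C[M_{q,r}],
\]
twisted by the genuine character of $K \times K'$ whose restrictions to the four unitary factors are $\det^{(r-s)/2}$, $\det^{(s-r)/2}$, $\det^{(p-q)/2}$, $\det^{(q-p)/2}$ (this reflects the normalization of the Weil representation and the splitting of the metaplectic cover over $K \times K'$ up to this central character). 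Further twisting by the splitting characters $\chi_V, \chi_W$ multiplies the $K$-action by $\det^{m_0/2} \boxtimes \det^{m_0/2}$ and the $K'$-action by $\det^{n_0/2} \boxtimes \det^{n_0/2}$, which accounts for the two $(\frac{m_0}{2}, \dots)$ and $(\frac{n_0}{2}, \dots)$ shifts appearing in the statement.

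Second I would apply the classical $(\GL_a, \GL_b)$-duality to each of the four polynomial factors: for instance $\C[M_{r,p}] = \bigoplus_\alpha \tau_\alpha^{\U(r)} \boxtimes \tau_\alpha^{\U(p)}$, where $\alpha$ runs over partitions with at most $\min\{p,r\}$ parts, and similarly for the other three factors (the factors $\C[M_{p,s}]$ and $\C[M_{q,r}]$ are dualized/conjugated relative to the other two, which is what forces the corresponding partitions to contribute \emph{negated, reversed} entries $b_j \le 0$ and $d_l \le 0$). The joint harmonics $\mathcal{H}$ is the common kernel of the natural transverse contraction operators, and this precisely cuts out the subspace on which, within each unitary factor, the two contributing partitions are \emph{column-disjoint}. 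This disjointness is what permits writing the $\U(p)$ highest weight as $(a_1, \dots, a_{p^+}, 0, \dots, 0, b_1, \dots, b_{p^-})$ with a zero block in the middle, and analogously on $\U(q)$, $\U(r)$, $\U(s)$. Tracking which of the four partitions contributes to which pair of unitary factors gives exactly the stated matching: $(a_i)$ appears on both $\U(p)$ and $\U(r)$, $(b_j)$ on $\U(p)$ and $\U(s)$, $(c_k)$ on $\U(q)$ and $\U(r)$, and $(d_l)$ on $\U(q)$ and $\U(s)$, rearranging into the explicit formulas for $\mu$ and $\mu'$.

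The stated inequalities $p^+ + p^- \le p$, $q^+ + q^- \le q$, $p^+ + q^- \le r$, $p^- + q^+ \le s$ are simply the restrictions that each of the four rectangular partitions has at most $\min\{\cdot,\cdot\}$ parts; combined with the shifts from the first step this yields the lemma. The main obstacle is purely sign- and convention-bookkeeping: one must verify that the chosen complex structure (fixed by our $\psi$) places the $(b_j)$-data on the $\U(s)$ side of $\mu'$ rather than on the $\U(r)$ side, and that the shifts $\pm\frac{r-s}{2}$, $\pm\frac{p-q}{2}$, $\frac{m_0}{2}$, $\frac{n_0}{2}$ attach to the correct unitary factors. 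Once this is matched against the normalization of $(\chi_V,\chi_W,\psi)$ fixed in \S \ref{ss:main-theorem-setup}, the result agrees with the $K$-type correspondence in \cite[Lemma 1.4.5]{paul1}, from which I would ultimately cite the final identification.
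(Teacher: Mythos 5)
Your plan is the standard Fock-model/see-saw derivation, and its overall shape is right, but the paper takes a much shorter route: it simply cites Konno--Konno \cite[Theorem~5.4]{konno}, remarking only that one must reconcile conventions (their $\omega_{V,W,\underline{\xi}}$ with $\underline{\xi}=(\chi_W,\chi_V^{-1})$, composed with $g\mapsto {}^t g^{-1}$ on $\U(p,q)$). In other words, the explicit $K\times K'$-type correspondence in joint harmonics for the pair $(\U(p,q),\U(r,s))$ is already tabulated in the literature, and the real content of the lemma is the bookkeeping that translates Konno--Konno's normalization into the one fixed in \S\ref{ss:main-theorem-setup}. Re-deriving this from the fourfold tensor decomposition is defensible but duplicates work that is available off the shelf.

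Two concrete issues with your sketch. First, your final pairing table is internally inconsistent with your own factorization: you declare $\C[M_{p,s}]$ and $\C[M_{q,r}]$ to be the dualized (hence negative-entry) factors, so the positive factors must pair $\U(p)$ with $\U(r)$ (giving the $a_i$) and $\U(q)$ with $\U(s)$ (giving the $c_k$), while $\U(p)$--$\U(s)$ gives the $b_j$ and $\U(q)$--$\U(r)$ gives the $d_l$; yet you then write that $(c_k)$ lives on $\U(q)$ and $\U(r)$ and $(d_l)$ on $\U(q)$ and $\U(s)$, which is the opposite of what the statement requires. This is exactly the kind of slip your own last paragraph flags as the main danger, so it needs to be fixed, not merely acknowledged. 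Second, your closing citation is not to the right statement: \cite[Lemma~1.4.5]{paul1} concerns the $(r,s)$-degree of a $K$-type (it is used elsewhere in the paper for precisely that), not the explicit description of $\mathcal{H}$; the statement you actually need is \cite[Theorem~5.4]{konno} (or an equivalent explicit tabulation of $K$-types in joint harmonics for real unitary dual pairs). Finally, the claim that joint harmonics is characterized by ``column-disjointness'' of the two partitions in each compact factor is too brief to be checkable; in the end the only constraints the lemma records are the obvious length bounds $p^++p^-\le p$, $q^++q^-\le q$, $p^++q^-\le r$, $p^-+q^+\le s$, and you should either prove that these suffice or cite the reference that says so rather than assert it.
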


\begin{proof}
Given our choice of the datum $(\chi_V, \chi_W, \psi)$, the assertion follows from \cite[Theorem 5.4]{konno}.
We remark that the convention in \cite{konno} is different from ours (see \cite[Lemma 3.1]{konno} and \cite[p.~758]{gi2}).
In particular, to switch the left and right actions of $\U(W)$ on $W$, we need to compose the Weil representation $\omega_{V,W, \underline{\xi}}$ as in \cite[\S 3.3]{konno} relative to the pair $\underline{\xi} = (\chi_W, \chi_V^{-1})$ with the automorphism $g \mapsto {}^t g^{-1}$ of $\U(p,q)$.
\end{proof}

Let $\pi$ be an irreducible representation of $\U(W)$ such that the theta lift $\theta_{V,W,\chi_V,\chi_W,\psi}(\pi)$ to $\U(V)$ is nonzero.
Let $\mu$ be a $K$-type of $\pi$, i.e.~an irreducible representation of $K$ occurring in $\pi|_K$.
We say that $\mu$ is of minimal $(r,s)$-degree in $\pi$ if the $(r,s)$-degree of $\mu$ is minimal among all $K$-types of $\pi$, in which case $\mu$ occurs in $\mathcal{H}$.

\begin{lem}
\label{l:lkt->mindeg}
Let $\pi$ be a discrete series representation of $\U(W)$ satisfying the assumption of Lemma \ref{l:base}.
Let $\mu$ be the lowest $K$-type of $\pi$.
Then $\mu$ is of minimal $(r,s)$-degree in $\pi$.
\end{lem}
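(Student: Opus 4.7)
The plan is to verify the minimality directly by combining the explicit form of the lowest $K$-type of a discrete series with a description of all $K$-types via Blattner, and then using the known formula for the $(r,s)$-degree.

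First, I would translate the hypothesis into explicit data for $\pi$. Write $\pi = \pi(\phi,\eta)$ with $\phi = \chi_{\kappa_1} \oplus \dots \oplus \chi_{\kappa_n}$ a discrete series parameter as in \S\ref{sss:packets-real-lds}; the integer $k_\pi \ge 0$ and the sign pattern $\{\epsilon_{\kappa_i}\}$ force $\lambda$ to split into three blocks: a "positive" block with entries $> (k_\pi-1)/2$, a "central" block consisting of the $k_\pi$ consecutive half-integers $\pm\frac{k_\pi-1}{2}, \dots$ appearing with alternating signs, and a "negative" block with entries $< -(k_\pi-1)/2$. The assumption $(r,s)=(r_\pi+l,s_\pi+l)$ with $l\ge k_\pi/2$ then translates into the statement that after shifting by $(r-s)/2$ on $\U(p)$ and $(s-r)/2$ on $\U(q)$, the central entries of $\mu$ fall within the "zero" positions of Lemma \ref{l:K-type-corresp}.

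Second, I would compute the lowest $K$-type $\mu$ of $\pi$ using the standard formula $\mu = \lambda + \rho(\Psi) - 2\rho_c$, where $\Psi$ is the unique positive system compatible with $\lambda$ containing $\Delta_c^+$. A direct inspection, using the block structure described above, shows that $\mu$ indeed has the shape required by Lemma \ref{l:K-type-corresp}; in particular the signature constraints $p^+ + q^- \le r$ and $p^- + q^+ \le s$ hold thanks to $l \ge k_\pi/2$, which confirms that $\mu$ occurs in $\mathcal{H}$. The resulting $(r,s)$-degree can then be read off from Howe's formula (equivalently, from \cite[\S 1.4]{paul1}): it is the sum of absolute values of the nonzero entries of $\mu$ (in the normalization of Lemma \ref{l:K-type-corresp}), namely $\sum_i a_i - \sum_i b_i + \sum_j c_j - \sum_j d_j$.

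Third, invoking Blattner's formula, any $K$-type of $\pi$ has the form $\mu + \sum_{\alpha \in \Psi_n^+} n_\alpha \alpha$ with $n_\alpha \ge 0$, where $\Psi_n^+$ consists of noncompact positive roots $\pm(\varepsilon_i - \varepsilon_j)$ with $1 \le i \le p < j \le p+q$; their signs are dictated by $\Psi$, which in turn is determined by the relative ordering of the $x$-coordinates and $y$-coordinates in $\b = \q(x)$. I would check that each such positive noncompact root $\alpha$, when added, either strictly increases the degree or leaves it unchanged while moving to a $K$-type of equal or higher degree; concretely, $\alpha$ raises one entry and lowers another, and because the lowest $K$-type has its "positive" entries already strictly positive and its "negative" entries already strictly negative (by the block description of $\lambda$), these moves cannot cancel to decrease the total of absolute values.

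The main obstacle is this last step in the central block, where entries of $\mu$ lie in the zero positions and a noncompact root could in principle move a zero entry in the wrong direction. The crucial use of the hypothesis $l \ge k_\pi/2$ is that the zero positions provide just enough slack: any move from $\mu$ into the central block by adding a noncompact positive root either stays within the zero positions (no change in degree) or crosses into a strictly positive or strictly negative position (strict increase). Once this case analysis is carried out using the explicit ordering constraints from \S\ref{ss:lds} encoded by $\Psi$, the conclusion that $\mu$ attains the minimal degree follows, completing the proof.
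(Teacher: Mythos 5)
Your proposal takes a genuinely different route from the paper. The paper's proof is a short reduction: since the $(r,s)$-degree of a $K$-type depends only on $r-s$ \cite[Lemma~1.4.5]{paul1}, one may replace $(r,s)$ by $(r_0,s_0) = (r_\pi + [k_\pi/2],\, s_\pi + [k_\pi/2])$ without changing any degree; since then $r_0 + s_0 \in \{n-1,n\}$ and Theorem~\ref{t:atobe} gives $\theta_{r_0,s_0}(\pi) \ne 0$, Paul's results for (nearly) equal-size dual pairs \cite[Prop.~0.5]{paul1}, \cite[Prop.~1.4]{paul2} apply directly. Your plan re-derives minimality from scratch via Blattner's formula, which is self-contained but considerably longer, and in particular does not exploit the $r-s$-invariance of the degree that makes the paper's argument so quick.

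There is, however, a gap at the crux of your sketch. To show that every $K$-type $\nu = \mu + \sum n_\alpha\alpha$ of $\pi$ (sum over noncompact positive roots $\alpha \in \Psi$, coefficients $n_\alpha \ge 0$) has degree at least that of $\mu$, write $\tilde\nu_a = \tilde\mu_a + m_a$ in the normalized coordinates of Lemma~\ref{l:K-type-corresp}. After observing that the degree of any $K$-type is bounded below by the sum of absolute values of its normalized entries, the entrywise convexity bound $|\tilde\mu_a + m_a| - |\tilde\mu_a| \ge \mathrm{sgn}(\tilde\mu_a)\, m_a$ reduces the claim to checking $\mathrm{sgn}(\tilde\mu_i) - \mathrm{sgn}(\tilde\mu_j) \ge 0$ whenever $\varepsilon_i - \varepsilon_j$ is a noncompact positive root with $i \le p < j$. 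Zero entries are harmless here; the configuration that would break the estimate is $\tilde\mu_i < 0 < \tilde\mu_j$. Ruling that out is where the discrete series structure enters: the $\theta$-stable Borel coordinates are monotone in $\lambda$, and a negative normalized entry of $\mu$ on the $\U(p)$-side corresponds to a strictly smaller $\lambda$-value than any positive one on the $\U(q)$-side, so $x_i < y_j$ and $\varepsilon_i - \varepsilon_j \notin \Psi$. Your paragraph on ``the main obstacle'' instead worries about the zero positions providing slack, which are not in fact the dangerous case, and the proposal contains no argument excluding the sign-mismatched pairs that could actually drop the degree. As written the key step is therefore missing.
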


\begin{proof}
Put $(r_0, s_0) = (r_\pi + [\frac{k_\pi}{2}], s_\pi + [\frac{k_\pi}{2}])$, so that $r_0+s_0 = n$ or $n-1$.
Let $V_0$ be the Hermitian space over $\C$ of signature $(r_0, s_0)$.
Then by Theorem \ref{t:atobe}, the theta lift $\theta_{V_0,W,\chi_V,\chi_W,\psi}(\pi)$ to $\U(V_0)$ is nonzero.
Moreover, by \cite[Proposition 0.5]{paul1} and \cite[Proposition 1.4]{paul2}, $\mu$ is of minimal $(r_0,s_0)$-degree in $\pi$.
On the other hand, since $r_0-s_0 = r-s$, the $(r_0,s_0)$-degree of any $K$-type $\nu$ of $\pi$ agrees with the $(r,s)$-degree of $\nu$.
Hence $\mu$ is of minimal $(r,s)$-degree in $\pi$.
\end{proof}

We also need a seesaw diagram
\[
\begin{tikzcd}
 \U(W) \times \U(W) \arrow[rd,dash] & \U(V) \arrow[ld,dash] \\
 \U(W) \arrow[u,dash] & \U(V_1) \times \U(V_2) \arrow[u,dash]
\end{tikzcd},
\]
where $V_1$ and $V_2$ are the Hermitian spaces over $\C$ of signatures $(r,0)$ and $(0,s)$, respectively, such that $V = V_1 \oplus V_2$ and $K' = \U(V_1) \times \U(V_2)$.
Consider the symplectic spaces
\[
 \W = V \otimes_E W, \quad 
 \W_1 = V_1 \otimes_E W, \quad
 \W_2 = V_2 \otimes_E W
\]
over $F$, so that $\W = \W_1 \oplus \W_2$.
We may take complete polarizations
\[
 \W = \X \oplus \Y, \quad
 \W_1 = \X_1 \oplus \Y_1, \quad
 \W_2 = \X_2 \oplus \Y_2
\]
such that $\X = \X_1 \oplus \X_2$ and $\Y = \Y_1 \oplus \Y_2$.
Write 
\[
 \omega = \omega_{V,W,\chi_V,\chi_W,\psi}, \quad
 \omega_1 = \omega_{V_1,W,\chi_{V_1},\chi_W,\psi}, \quad
 \omega_2 = \omega_{V_2,W,\chi_{V_2},\chi_W,\psi},
\]
where $\chi_{V_1}, \chi_{V_2}$ are characters of $\C^\times$ given by 
\[
 \chi_{V_1}(z) = \left( \frac{z}{\sqrt{z \bar{z}}} \right)^{m_1}, \quad
 \chi_{V_2}(z) = \left( \frac{z}{\sqrt{z \bar{z}}} \right)^{m_2} 
\]
for some integers $m_1, m_2$ such that
\[
 m_1 \equiv r \bmod 2, \quad
 m_2 \equiv s \bmod 2, \quad
 m_1 + m_2 = m_0.
\]
Then we have an identification
\[
 (\omega, S(\X)) = (\omega_1 \boxtimes \omega_2, S(\X_1) \otimes S(\X_2))
\]
as representations of $\U(W) \times \U(V_1) \times \U(V_2)$.
In particular, we have
\begin{equation}
\label{eq:matcoeff-weil} 
 (\omega(g) \varphi_1, \varphi_2) =
 (\omega_1(g) \varphi_{1,1}, \varphi_{2,1})
 (\omega_2(g) \varphi_{1,2}, \varphi_{2,2})
\end{equation}
for $g \in \U(W)$ and $\varphi_1 = \varphi_{1,1} \otimes \varphi_{1,2}, \varphi_2 = \varphi_{2,1} \otimes \varphi_{2,2} \in S(\X)$ with $\varphi_{1,i}, \varphi_{2,i} \in S(\X_i)$.

We now prove Lemma \ref{l:base}.
Let $\pi$ be a discrete series representation of $\U(W)$ satisfying the assumption of Lemma \ref{l:base}.
Let $\mu$ be the lowest $K$-type of $\pi$.
By Lemma \ref{l:lkt->mindeg}, we may write
\begin{align*}
 \mu & = (a_1, \dots, a_{p^+}, 0, \dots, 0, b_1, \dots, b_{p^-}; c_1, \dots, c_{q^+}, 0, \dots, 0, d_1, \dots, d_{q^-}) \\
 & + \bigg( \frac{r-s}{2}, \dots, \frac{r-s}{2}; \frac{s-r}{2}, \dots, \frac{s-r}{2} \bigg) + \bigg( \frac{m_0}{2}, \dots, \frac{m_0}{2} \bigg)
\end{align*}
as in Lemma \ref{l:K-type-corresp}.
Put
\begin{align*}
 \mu_1 & = (a_1, \dots, a_{p^+}, 0, \dots, 0; 0, \dots, 0, d_1, \dots, d_{q^-})
 + \bigg( \frac{r}{2}, \dots, \frac{r}{2}; -\frac{r}{2}, \dots, -\frac{r}{2} \bigg) + \bigg( \frac{m_1}{2}, \dots, \frac{m_1}{2} \bigg), \\ 
 \mu_2 & = (0, \dots, 0, b_1, \dots, b_{p^-}; c_1, \dots, c_{q^+}, 0, \dots, 0)
 + \bigg( {-\frac{s}{2}}, \dots, -\frac{s}{2}; \frac{s}{2}, \dots, \frac{s}{2} \bigg) + \bigg( \frac{m_2}{2}, \dots, \frac{m_2}{2} \bigg),
\end{align*}
so that the tensor product representation $\mu_1 \otimes \mu_2$ contains $\mu$.
Let $\mu'$ be the irreducible representation of $K'$ corresponding to $\mu$.
Let $\mu_1'$ and $\mu_2'$ be the irreducible representations of $\U(V_1)$ and $\U(V_2)$, respectively, given by 
\begin{align*}
 \mu_1' & = (a_1, \dots, a_{p^+}, 0, \dots, 0, d_1, \dots, d_{q^-})
 + \bigg( \frac{p-q}{2}, \dots, \frac{p-q}{2} \bigg) + \bigg( \frac{n_0}{2}, \dots, \frac{n_0}{2} \bigg), \\
 \mu_2' & = (c_1, \dots, c_{q^+}, 0, \dots, 0, b_1, \dots, b_{p^-})
 + \bigg( \frac{q-p}{2}, \dots, \frac{q-p}{2} \bigg) + \bigg( \frac{n_0}{2}, \dots, \frac{n_0}{2} \bigg),
\end{align*}
so that $\mu' = \mu_1' \boxtimes \mu_2'$.
Then the theta lift $\pi_i = \theta_{V_i,W,\chi_{V_i},\chi_W, \psi}(\mu_i')$ to $\U(W)$ is nonzero.
In fact, $\pi_i$ is the unitary highest weight module with lowest $K$-type $\mu_i$ (see \cite{kashiwara-vergne}).
Since $\U(V_i)$ is compact, we may realize the representation $\pi_i \boxtimes \mu_i'$ of $\U(W) \times \U(V_i)$ on the $\mu_i'$-isotypic component $S(\X_i)_{\mu_i'}$ of $S(\X_i)$.
In particular, for $\varphi_{1,i}, \varphi_{2,i} \in S(\X_i)_{\mu_i'}$, the function 
\[
 g \mapsto (\omega_i(g) \varphi_{1,i}, \varphi_{2,i})
\]
is a matrix coefficient of $\pi_i$.
By \eqref{eq:matcoeff-weil}, it suffices to show that the integral
\begin{equation}
\label{eq:matcoeff-integral}
\int_{\U(W)} (\omega_1(g) \varphi_{1,1}, \varphi_{2,1}) (\omega_2(g) \varphi_{1,2}, \varphi_{2,2}) \overline{(\pi(g) v_1, v_2)} \, dg
\end{equation}
is nonzero for some $\varphi_{1,1}, \varphi_{2,1} \in S(\X_1)_{\mu_1'}$, $\varphi_{1,2}, \varphi_{2,2} \in S(\X_2)_{\mu_2'}$, and $v_1, v_2 \in \pi$.
Let $\Psi$ be the Flensted-Jensen function given by
\[
 \Psi(g) = \frac{1}{\dim \mu} \cdot \Tr(P_\mu \pi(g) P_\mu),
\]
where $P_\mu$ is the orthogonal projection to the $\mu$-isotypic component of $\pi$ (see \cite[\S 7]{flensted-jensen}).
Similarly, let $\Psi_i$ be the function given by 
\[
 \Psi_i(g) = \frac{1}{\dim \mu_i} \cdot \Tr(P_{\mu_i} \pi_i(g) P_{\mu_i}).
\]
Then it follows from the proof of \cite[Theorem 4.1]{li90} that
\[
 \int_{\U(W)} \Psi_1(g) \Psi_2(g) \overline{\Psi(g)} \, dg \ne 0.
\]
Since $\Psi_1, \Psi_2, \Psi$ are linear combinations of matrix coefficients of $\pi_1, \pi_2, \pi$, respectively, this integral is a linear combination of integrals of the form \eqref{eq:matcoeff-integral}.
This completes the proof of Lemma \ref{l:base} and hence of Proposition \ref{p:key}.

\section{Proof of Theorem \ref{t:main-lds}\eqref{main-lds-1}}

In this section, we consider the theta lifting from $\U(p,q)$ to $\U(r,s)$ with $p+q=n$ and $r+s=m$ in the case $m>n$ and determine the theta lifts of (limits of) discrete series representations by a global-to-local argument.

\subsection{Local theta lifting}

Let $F$ be a local field of characteristic zero and $E$ an \'etale quadratic algebra over $F$.
We consider the theta lifting from $\U(W)$ to $\U(V)$, where $W$ is an $n$-dimensional skew-Hermitian space over $E$ and $V$ is an $m$-dimensional Hermitian space over $E$ with $m>n$.

\subsubsection{The real case}
\label{sss:local-theta-real}

Suppose that $F=\R$ and $E=\C$.
Let $(p,q)$ and $(r,s)$ be the signatures of $W$ and $V$, respectively.
We take the datum $(\chi_V, \chi_W, \psi)$ given in \S \ref{ss:main-theorem-setup}.

Let $\pi$ be a (limit of) discrete series representation of $\U(p,q)$ and write $\pi = \nA_\b(\lambda)$ as in \eqref{eq:lds} with
\[
 \lambda = (\alpha_1, \dots, \alpha_{p^+}, \beta_1, \dots, \beta_{p^-}, \gamma_1, \dots, \gamma_{q^+}, \delta_1, \dots, \delta_{q^-}) + \bigg( \frac{m_0}{2}, \dots, \frac{m_0}{2} \bigg),
\]
where 
\begin{itemize}
\item $\alpha_i, \gamma_j > 0$ and $\beta_i, \delta_j \le 0$;
\item $p^++p^- = p$ and $q^++q^- = q$.
\end{itemize}
Define $L$- and $A$-parameters $\phi$ and $\phi'$ for $\U_n$ and $\U_m$, respectively, by 
\begin{align*}
 \phi & = \chi_{\kappa_1} \oplus \dots \oplus \chi_{\kappa_n}, \\
 \phi' & = \chi_{\kappa_1} \chi_V^{-1} \chi_W \oplus \dots \oplus \chi_{\kappa_n} \chi_V^{-1} \chi_W \oplus (\chi_W \boxtimes S_{m-n}), 
\end{align*}
where 
\begin{itemize}
\item $\kappa_1 \ge \dots \ge \kappa_{i_0-1} > \frac{m_0}{2} \ge \kappa_{i_0} \ge \dots \ge \kappa_n$;
\item $\{ \kappa_1 - \frac{m_0}{2}, \dots, \kappa_{i_0-1} - \frac{m_0}{2} \} = \{ \alpha_1, \dots, \alpha_{p^+}, \gamma_1, \dots, \gamma_{q^+} \}$ as multi-sets;
\item $\{ \kappa_{i_0} - \frac{m_0}{2}, \dots, \kappa_n - \frac{m_0}{2} \} = \{ \beta_1, \dots, \beta_{p^-}, \delta_1, \dots, \delta_{q^-} \}$ as multi-sets;
\item $i_0 = p^++q^++1$.
\end{itemize}
Then we have 
\[
 \pi = \pi(\phi, \eta)
\]
for some character $\eta$ of $S_\phi$.
We identify $S_\phi, S_{\phi'}$ with quotients of 
\begin{align*}
 \widetilde{S}_\phi & = (\Z/2\Z) e_1 \oplus \dots \oplus (\Z/2 \Z) e_n, \\
 \widetilde{S}_{\phi'} & = (\Z/2\Z) e'_1 \oplus \dots \oplus (\Z/2 \Z) e'_n \oplus (\Z/2 \Z) e'_0
\end{align*}
as in \S \ref{sss:packets-real-lds}, \S \ref{sss:packets-real-coh}, respectively.
Define a character $\eta'$ of $\widetilde{S}_{\phi'}$ by 
\[
 \eta'(e_i') = \zeta_i \times 
 \begin{cases}
  \eta(e_i) & \text{if $i \ne 0$;} \\
  (-1)^{\frac{1}{2}(p-q)(p-q-1) + \frac{1}{2}(r-s)(r-s-1)} & \text{if $i = 0$,}
 \end{cases}
\]
where
\[
 \zeta_i = 
 \begin{cases}
  +1 & \text{if $m \equiv n \bmod 2$ and $0 < i < i_0$}; \\
  -1 & \text{if $m \equiv n \bmod 2$ and $i \ge i_0$;} \\
  +1 & \text{if $m \not \equiv n \bmod 2$ and $i \ne 0$}
 \end{cases}
\]
and 
\[
 \zeta_0 = \zeta_1 \cdots \zeta_n.
\]

\begin{lem}
\label{l:Aq-eta}
Assume that $\theta_{r,s}(\pi) \ne 0$.
Then $\eta'$ descends to a character of $S_{\phi'}$ and the associated representation $\sigma(\phi', \eta')$ of $\U(r,s)$ is equal to $\nA_\q(\lambda')$, where $\q$ and $\lambda'$ are as in Theorem \ref{t:main-lds}\eqref{main-lds-1}.
\end{lem}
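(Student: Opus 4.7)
The plan has three steps: (i) verify that $\eta'$ descends from $\widetilde S_{\phi'}$ to $S_{\phi'}$, (ii) apply the explicit description of \S\ref{sss:packets-real-coh} to identify $\sigma(\phi',\eta')$ with $\nA_\q(\lambda')$ for some explicit $\q$ and $\lambda'$, and (iii) match the resulting $\q,\lambda'$ with those prescribed by Theorem \ref{t:main-lds}\eqref{main-lds-1}.

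For step (i), the nontrivial descent conditions are $\eta'(e'_i)=\eta'(e'_j)$ whenever $\mu_i=\mu_j$. If $i,j\ne 0$, then $\mu_i=\mu_j$ forces $\kappa_i=\kappa_j$, placing $i$ and $j$ on the same side of $i_0$, so $\zeta_i=\zeta_j$, and the condition reduces to $\eta(e_i)=\eta(e_j)$, which holds because $\eta\in\widehat S_\phi$. When $m-n=1$, we must also treat the case $\mu_i=\mu_0$, i.e.\ $\kappa_i=m_0/2$; here all $\zeta$'s equal $+1$, so the required identity becomes $\eta(e_i)=(-1)^{\frac{1}{2}(p-q)(p-q-1)+\frac{1}{2}(r-s)(r-s-1)}$. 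I would extract this by unwinding the recipe of \S\ref{sss:packets-real-lds}: such an index $i$ sits at the boundary $i=i_0$ in the ordering, so $\eta(e_{i_0})$ depends only on how many $\beta$- versus $\delta$-entries precede it in the given ordering, and the nonvanishing hypothesis $\theta_{r,s}(\pi)\ne 0$, together with the dimension constraints $p^++q^-\le r$ and $p^-+q^+\le s$ from Corollary \ref{c:local-theta-xyzw}, pins the parity down to the required value.

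For step (ii), I would compute the data $(r_i,s_i)_{1\le i\le n+1}$ supplied by \S\ref{sss:packets-real-coh} for $(\phi',\eta')$. The definition of $\zeta_i$ is chosen precisely so that the sign flips inherent in that recipe (differing between $i<i_0$ and $i>i_0$, and between the cases $m\equiv n$ and $m\not\equiv n$ mod $2$) combine with $\eta(e_i)$ to yield $(r_i,s_i)=(1,0)$ when $\kappa_i-m_0/2$ is one of the $\alpha_j$ or $\delta_j$, and $(r_i,s_i)=(0,1)$ when it is one of the $\beta_j$ or $\gamma_j$. The central block $(r_{i_0},s_{i_0})$ automatically sums to $m-n$; nonnegativity $r_{i_0},s_{i_0}\ge 0$ follows from $\theta_{r,s}(\pi)\ne 0$ via Corollary \ref{c:local-theta-xyzw}. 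The sign condition \eqref{eq:eta'} reduces, via $\zeta_0=\zeta_1\cdots\zeta_n$ and Lemma \ref{l:eta'}, to the identity $\eta(e_1+\dots+e_n)=(-1)^{\frac{1}{2}(p-q)(p-q-1)}$ from \S\ref{sss:packets-real-lds}, and therefore holds automatically.

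Step (iii) is then a matter of bookkeeping: after subtracting $(n_0/2,\dots,n_0/2)$, the $p$-block of $\lambda'$ reads $(\alpha_1,\dots,\alpha_{p^+},0,\dots,0,\delta_1,\dots,\delta_{q^-})$ and the $q$-block reads $(\gamma_1,\dots,\gamma_{q^+},0,\dots,0,\beta_1,\dots,\beta_{p^-})$, matching Theorem \ref{t:main-lds}\eqref{main-lds-1}, and $\q=\q(x')$ is obtained from any strictly decreasing sequence $x'_1>\dots>x'_{n+1}$. The main obstacle will be the combinatorial bookkeeping in step (i) for the edge case $m-n=1$, together with the sign matching in step (ii), where three distinct sign conventions (the discrete series recipe of \S\ref{sss:packets-real-lds}, the $A$-packet recipe of \S\ref{sss:packets-real-coh}, and our definition of $\zeta_i$) must be reconciled simultaneously.
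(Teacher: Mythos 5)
Your plan follows the same broad structure as the paper's proof: verify the descent, run the recipe of \S\ref{sss:packets-real-coh} to compute $\sigma(\phi',\eta')$, and match with $\nA_\q(\lambda')$. Steps (ii) and (iii) are sound — the paper also uses Corollary \ref{c:local-theta-xyzw} for the nonnegativity of $(r_{i_0},s_{i_0})$, verifies \eqref{eq:eta'} via $\zeta_0 = \zeta_1\cdots\zeta_n$ and $\eta(e_1+\dots+e_n)=(-1)^{\frac{1}{2}(p-q)(p-q-1)}$, and then matches the resulting $\tilde{\q},\tilde{\lambda}'$ with the prescribed $\q,\lambda'$.

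However, step (i) has a genuine gap in the case $m=n+1$, $\kappa_{i_0}=m_0/2$. Corollary \ref{c:local-theta-xyzw} gives only the inequalities $p^++q^-\le r$ and $p^-+q^+\le s$, and with $r+s=n+1$ both candidates $(r,s)=(p^++q^-+1,\,p^-+q^+)$ and $(r,s)=(p^++q^-,\,p^-+q^++1)$ satisfy them. A direct parity computation (setting $a=p^+-q^+$, $b=p^--q^-$, so $p-q=a+b$) shows that $\frac{1}{2}(p-q)(p-q-1)+\frac{1}{2}(r-s)(r-s-1)$ is congruent to $a\equiv i_0-1$ for the first choice of $(r,s)$ and to $a+1\equiv i_0$ for the second; so the required identity $\eta(e_{i_0})=(-1)^{\frac{1}{2}(p-q)(p-q-1)+\frac{1}{2}(r-s)(r-s-1)}$ holds only if the actual signature $(r,s)$ is paired with $\eta(e_{i_0})$ in the correct way — namely $(p^++q^-+1,\,p^-+q^+)$ when $\eta(e_{i_0})=(-1)^{i_0-1}$ and $(p^++q^-,\,p^-+q^++1)$ when $\eta(e_{i_0})=(-1)^{i_0}$ — and fails for the opposite pairing. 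Establishing that this is in fact the correct pairing is exactly the content of Atobe's criterion (Theorem \ref{t:atobe}), which the paper invokes after explicitly computing $k_\pi$ and $(r_\pi,s_\pi)$ in terms of the multiplicity of $m_0/2$ among the $\kappa_i$ and the sign $\eta(e_{i_0})$. The dimension constraints from Corollary \ref{c:local-theta-xyzw} cannot pin this down by themselves, so your argument needs Theorem \ref{t:atobe} as an explicit input at this point.
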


\begin{proof}
Recall that $\b = \q(x)$ is associated to
\[
 x = (\underbrace{x_1,\dots,x_1}_{p_1}, \dots, \underbrace{x_n,\dots,x_n}_{p_n}, \underbrace{x_1,\dots,x_1}_{q_1}, \dots, \underbrace{x_n,\dots,x_n}_{q_n})
\]
for any $x_1, \dots, x_n \in \R$ such that $x_1 > \dots > x_n$, where 
\begin{equation}
\label{eq:piqi}
 (p_i, q_i) = 
 \begin{cases}
  (1,0) & \text{if $\eta(e_i) = (-1)^{i-1}$;} \\
  (0,1) & \text{if $\eta(e_i) = (-1)^i$.}
 \end{cases}
\end{equation}
In particular, we have
\[
\begin{aligned}
 p^+ & = p_1 + \dots + p_{i_0-1}, \, & 
 p^- & = p_{i_0} + \dots + p_n, \\
 q^+ & = q_1 + \dots + q_{i_0-1}, \, & 
 q^- & = q_{i_0} + \dots + q_n.
\end{aligned}
\]

For the first assertion, it suffices to show that if $m=n+1$ and $\kappa_{i_0} = \frac{m_0}{2}$, then
\begin{equation}
\label{eq:eta(e_{i_0})}
 \eta(e_{i_0}) = (-1)^{\frac{1}{2}(p-q)(p-q-1) + \frac{1}{2}(r-s)(r-s-1)}.
\end{equation}
Let $k \ge 1$ be the multiplicity of $\frac{m_0}{2}$ in $\{ \kappa_1, \dots, \kappa_n \}$.
Then we have
\[
\begin{cases}
 k_\pi = -1 & \text{if $k$ is even;} \\
 k_\pi \ge 1 & \text{if $k$ is odd,}
\end{cases} 
\]
and 
\[
 (r_\pi, s_\pi) = 
\begin{cases}
 (p^+ + q^-, p^- + q^+) & \text{if $k$ is even;} \\
 (p^+ + q^- - \frac{k_\pi - 1}{2}, p^- + q^+ - \frac{k_\pi + 1}{2}) & \text{if $k$ is odd and $\eta(e_{i_0}) = (-1)^{i_0-1}$;} \\
 (p^+ + q^- - \frac{k_\pi + 1}{2}, p^- + q^+ - \frac{k_\pi - 1}{2}) & \text{if $k$ is odd and $\eta(e_{i_0}) = (-1)^{i_0}$.}
\end{cases}
\]
Hence by Theorem \ref{t:atobe}, we must have
\[
 (r, s) = 
\begin{cases}
 (p^+ + q^- + 1, p^- + q^+) & \text{if $\eta(e_{i_0}) = (-1)^{i_0-1}$;} \\
 (p^+ + q^-, p^- + q^+ + 1) & \text{if $\eta(e_{i_0}) = (-1)^{i_0}$.}
\end{cases}
\]
If $\eta(e_{i_0}) = (-1)^{i_0-1}$, then we have
\begin{align*}
 & \frac{1}{2}(p-q)(p-q-1) + \frac{1}{2}(r-s)(r-s-1) \\
 & = \frac{1}{2}(p^+ - q^+ + p^- - q^-)(p^+ - q^+ + p^- - q^- -1) \\
 & + \frac{1}{2}(p^+ - q^+ - p^- + q^- +1)(p^+ - q^+ - p^- + q^-) \\
 & = (p^+ - q^+)^2 + (p^- - q^-)(p^- - q^- -1) \\
 & \equiv p^+ + q^+ \bmod 2, 
\end{align*}
so that \eqref{eq:eta(e_{i_0})} follows.
If $\eta(e_{i_0}) = (-1)^{i_0}$, then \eqref{eq:eta(e_{i_0})} follows similarly.

For the second assertion, we first note that
\[
 p^+ + q^- \le r, \quad
 p^- + q^+ \le s
\]
by Corollary \ref{c:local-theta-xyzw}.
For $1 \le i \le n+1$, we define a pair of integers $(r_i, s_i)$ as in \S \ref{sss:packets-real-coh}, so that 
\begin{equation}
\label{eq:risi}
 (r_i, s_i) =  
 \begin{cases}
  (1,0) & \text{if $i<i_0$ and $\eta(e_i) = (-1)^{i-1}$;} \\
  (0,1) & \text{if $i<i_0$ and $\eta(e_i) = (-1)^i$;} \\
  (1,0) & \text{if $i>i_0$ and $\eta(e_{i-1}) = (-1)^{i-1}$;} \\
  (0,1) & \text{if $i>i_0$ and $\eta(e_{i-1}) = (-1)^i$}
 \end{cases}
\end{equation}
and 
\[
 (r_{i_0}, s_{i_0}) = (r - p^+ - q^-, s - p^- - q^+).
\]
Since $r_{i_0}, s_{i_0} \ge 0$ and 
\begin{align*}
 \eta'(e_1' + \dots + e_n' + e_0') & = \eta(e_1 + \dots + e_n) \cdot (-1)^{\frac{1}{2}(p-q)(p-q-1) + \frac{1}{2}(r-s)(r-s-1)} \\
 & = (-1)^{\frac{1}{2}(r-s)(r-s-1)},
\end{align*}
we have
\[
 \sigma(\phi', \eta') = \nA_{\tilde{\q}}(\tilde{\lambda}'), 
\]
where $\tilde{\lambda}'$ is given by
\[
 \tilde{\lambda}' = (\underbrace{\tilde{\lambda}'_1,\dots,\tilde{\lambda}'_1}_{r_1}, \dots, \underbrace{\tilde{\lambda}'_{n+1},\dots,\tilde{\lambda}'_{n+1}}_{r_{n+1}}, \underbrace{\tilde{\lambda}'_1,\dots,\tilde{\lambda}'_1}_{s_1}, \dots, \underbrace{\tilde{\lambda}'_{n+1},\dots,\tilde{\lambda}'_{n+1}}_{s_{n+1}})
\]
with 
\[
 \tilde{\lambda}'_i = 
 \begin{cases}
  \kappa_i - \frac{m_0}{2} + \frac{n_0}{2} & \text{if $i < i_0$;} \\
  \frac{n_0}{2} & \text{if $i = i_0$;} \\
  \kappa_{i-1} - \frac{m_0}{2} + \frac{n_0}{2} & \text{if $i > i_0$}
 \end{cases}
\]
and $\tilde{\q} = \q(\tilde{x})$ is associated to
\[
 \tilde{x} = (\underbrace{\tilde{x}_1,\dots,\tilde{x}_1}_{r_1}, \dots, \underbrace{\tilde{x}_{n+1},\dots,\tilde{x}_{n+1}}_{r_{n+1}}, \underbrace{\tilde{x}_1,\dots,\tilde{x}_1}_{s_1}, \dots, \underbrace{\tilde{x}_{n+1},\dots,\tilde{x}_{n+1}}_{s_{n+1}})
\]
for any $\tilde{x}_1, \dots, \tilde{x}_{n+1} \in \R$ such that $\tilde{x}_1 > \dots > \tilde{x}_{n+1}$.
However, we can deduce from \eqref{eq:piqi} and \eqref{eq:risi} that $\tilde{\q} = \q$ and $\tilde{\lambda}' = \lambda'$.
This completes the proof.
\end{proof}

\begin{lem}
\label{l:local-theta-1}
Assume that $\pi$ is a discrete series representation such that
\[
 p^+ + q^- \le r, \quad
 p^- + q^+ \le s
\]
and 
\[
 \alpha_i, \gamma_j \ge \frac{m-n+1}{2}, \quad
 \beta_i, \delta_j \le -\frac{m-n+1}{2}.
\]
Then we have
\[
 \theta_{r,s}(\pi) = \sigma(\phi', \eta').
\]
\end{lem}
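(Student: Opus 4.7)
The plan is to combine Li's formula \cite{li90} for theta lifts of sufficiently regular discrete series with Lemma \ref{l:Aq-eta}. Since Lemma \ref{l:Aq-eta}, under the assumption $\theta_{r,s}(\pi)\ne 0$, identifies $\sigma(\phi',\eta')$ with $\nA_\q(\lambda')$ where $\q,\lambda'$ are the data from Theorem \ref{t:main-lds}\eqref{main-lds-1}, it suffices to prove that $\theta_{r,s}(\pi) = \nA_\q(\lambda')$ under the hypotheses of the present lemma; the nonvanishing of $\theta_{r,s}(\pi)$ is then automatic from Li's theorem (good-range cohomological induction is always nonzero and irreducible).

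The regularity assumption $\alpha_i,\gamma_j\ge (m-n+1)/2$ and $\beta_i,\delta_j\le -(m-n+1)/2$ translates into $|\kappa_i - m_0/2|\ge (m-n+1)/2$ for every component $\kappa_i$ of the Harish-Chandra parameter of $\pi$. This is precisely the ``sufficiently regular'' condition under which Li's theorem describes $\theta_{r,s}(\pi)$ as a cohomologically induced representation in the good range, built from the lowest $K$-type of $\pi$ and the signature discrepancy $(r-p^+-q^-,\, s-p^--q^+)$; the inequalities $p^++q^-\le r$ and $p^-+q^+\le s$ guarantee that this discrepancy is nonnegative, so that the corresponding parabolic exists. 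Using Lemma \ref{l:K-type-corresp} (exactly as in the proof of Lemma \ref{l:lkt->mindeg}) to read off the lowest $K$-type, I would then match Li's $\theta$-stable parabolic and inducing character with our $\q = \q(x')$ and $\lambda'-\rho(\u)$ on the nose, obtaining $\theta_{r,s}(\pi)=A_\q(\lambda'-\rho(\u))=\nA_\q(\lambda')$.

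The main obstacle is the parameter matching in the previous step. Li's paper uses an unnormalized cohomological induction and different conventions for the splitting character and the Weil representation, so one has to do careful bookkeeping of the shifts by $m_0/2$, $n_0/2$, and $\rho(\u)$, and to check that the ordering of the coordinates of $x'$ prescribed in Theorem \ref{t:main-lds}\eqref{main-lds-1}, which is dictated by the positions of the $\alpha_i,\gamma_j$ versus the $\beta_i,\delta_j$ in the Borel $\b=\q(x)$ attached to $\pi$, is the same ordering that emerges from Li's construction. Once this dictionary is fixed, the agreement of the two parabolics and of the two weights is an explicit verification, and combining with Lemma \ref{l:Aq-eta} yields $\theta_{r,s}(\pi)=\sigma(\phi',\eta')$ as required.
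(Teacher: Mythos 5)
Your proposal is correct and follows the same two-step route the paper takes: invoke Li's theorem to obtain $\theta_{r,s}(\pi)=\nA_\q(\lambda')$ (nonvanishing being automatic in the good range), then apply Lemma \ref{l:Aq-eta} to identify this with $\sigma(\phi',\eta')$. The translation of the regularity hypothesis and the parameter-matching bookkeeping you flag are exactly the content the paper leaves implicit when it cites \cite{li90}.
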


\begin{proof}
By a result of Li \cite{li90}, we have $\theta_{r,s}(\pi) \ne 0$ and 
\[
 \theta_{r,s}(\pi) = \nA_\q(\lambda'),
\]
where $\q$ and $\lambda'$ are as in Theorem \ref{t:main-lds}\eqref{main-lds-1}.
Hence the assertion follows from Lemma \ref{l:Aq-eta}.
\end{proof}

\subsubsection{The nonarchimedean case}

Suppose that $F$ is nonarchimedean and $E \ne F \times F$.
Let $\phi$ and $\phi'$ be $L$- and $A$-parameters for $\U_n$ and $\U_m$, respectively, of the form
\begin{align*}
 \phi & = \chi_1 \oplus \dots \oplus \chi_n, \\
 \phi' & = \chi_1 \chi_V^{-1} \chi_W \oplus \dots \oplus \chi_n \chi_V^{-1} \chi_W \oplus (\chi_W \boxtimes S_{m-n})
\end{align*}
with (not necessarily distinct) conjugate-selfdual characters $\chi_1, \dots, \chi_n$ of $E^\times$ with sign $(-1)^{n-1}$.
If $m \equiv n \bmod 2$, we assume further the condition on the $\epsilon$-factor
\begin{equation}
\label{eq:epsilon-assumption}
 \epsilon(\tfrac{1}{2}, \chi_i \chi_V^{-1}, \psi^E_2) = 1 
\end{equation}
for all $i$, where $\psi^E_2$ is the character of $E$ given by $\psi^E_2(x) = \psi(\Tr_{E/F}(\delta x))$.

\begin{lem}
\label{l:local-theta-2}
Assume that $\epsilon(V) = \epsilon(W) = +1$.
Then we have
\[
 \theta_{V,W,\chi_V,\chi_W,\psi}(\pi(\phi, \mathbbm{1})) = \sigma(\phi', \mathbbm{1}).
\]
\end{lem}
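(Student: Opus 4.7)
The plan is to combine the nonvanishing of the theta lift, guaranteed by the epsilon-factor hypothesis, with Kudla's induction principle and the explicit Langlands-quotient description of $\sigma(\phi', \mathbbm{1})$ furnished by Lemma \ref{l:local-Apacket}.

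First, I would establish that $\theta_{V, W, \chi_V, \chi_W, \psi}(\pi(\phi, \mathbbm{1}))$ is nonzero. In the case $m \not\equiv n \pmod{2}$, this is automatic, since the first occurrence of any tempered representation on the $+$-tower happens at dimension $n+1$ or lower. In the case $m \equiv n \pmod{2}$, the epsilon-factor condition \eqref{eq:epsilon-assumption} is precisely the Prasad-type dichotomy criterion proved by Gan--Ichino, which ensures that the first occurrence of $\pi(\phi, \mathbbm{1})$ on the tower $\{V_{m'}^+\}$ already happens at dimension $n$. The Kudla--Rallis tower property then yields nonvanishing of the theta lift to $\U(V_{m'}^+)$ for every $m' \ge n$ with $m' \equiv n \pmod 2$, and in particular at $m$.

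Second, I would identify the theta lift by induction on $m - n$. The base cases $m = n$ and $m = n+1$ preserve temperedness: in both, the preservation of the local Langlands correspondence by theta (due to Gan--Ichino) identifies $\theta_{V, W, \chi_V, \chi_W, \psi}(\pi(\phi, \mathbbm{1}))$ with $\pi(\phi_0, \mathbbm{1})$ or $\pi(\phi_1, \mathbbm{1})$ respectively, where $\phi_0$ and $\phi_1$ are as in Lemma \ref{l:local-Apacket}; the epsilon-factor condition pins down the trivial character of $S_\phi$ going to the trivial character of $S_{\phi_0}$ or $S_{\phi_1}$. For the inductive step, Kudla's induction principle places the theta lift as an irreducible subquotient of a standard module of the form
\[
 \Ind\bigl(\chi_W |\cdot|^{(m-n-1)/2} \boxtimes \theta_{V_{m-2}^+, W, \chi_V, \chi_W, \psi}(\pi(\phi, \mathbbm{1}))\bigr);
\]
combined with nonvanishing and the inductive hypothesis, this identifies the theta lift with the unique irreducible quotient of the standard module appearing in Lemma \ref{l:local-Apacket}, which is precisely $\sigma(\phi', \mathbbm{1})$.

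The main obstacle lies in the base case: tracking the character labels on both sides of the theta correspondence in equal or near-equal rank. This relies on Gan--Ichino's local character identity for tempered theta lifts, where the epsilon-factor hypothesis \eqref{eq:epsilon-assumption} is essential both to force the character jump to be trivial and, in the same stroke, to guarantee that the theta lift lands on the quasi-split side rather than on $\U(V_m^-)$.
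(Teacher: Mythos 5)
Your overall strategy parallels the paper's, but where the paper cites Atobe--Gan as a black box, you attempt to re-derive their results by a bottom-up induction on $m-n$, and this introduces a gap. The paper's proof is quite short: it invokes \cite[Theorem 4.1]{atobe-gan} together with \eqref{eq:epsilon-assumption} to pin down the first occurrence indices $m^{\up}(\pi), m^{\down}(\pi)$ (and to see that $m^{\down}(\pi) = m^+(\pi)$), then cites \cite[Theorem 4.3]{atobe-gan} — which directly gives $\theta_{V,W,\chi_V,\chi_W,\psi}(\pi)$ as an explicit Langlands quotient $J(\chi_W|\cdot|^{\frac12(m-n-1)},\dots)$ — and matches this with Lemma~\ref{l:local-Apacket}.

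The gap in your inductive step is that Kudla's induction principle only places $\theta_{V_m^+, W, \chi_V, \chi_W, \psi}(\pi)$ as an irreducible \emph{subquotient} of the induced representation $\Ind(\chi_W|\cdot|^{\frac12(m-n-1)} \boxtimes \theta_{V_{m-2}^+, W, \chi_V, \chi_W, \psi}(\pi))$. A standard module can have many irreducible subquotients, and nonvanishing of the theta lift alone does not force it to be the top (Langlands) quotient; you would need either to upgrade ``subquotient'' to ``quotient'' (via a careful Kudla filtration/Frobenius reciprocity argument), or to determine the Langlands data of the theta lift by some independent means — which is precisely what \cite[Theorem 4.3]{atobe-gan} supplies. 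A second, smaller issue is the base case in the branch $m \not\equiv n \bmod 2$: if the first occurrence $m^+(\pi)$ is at $n-1$ rather than $n+1$ (which happens exactly when $\chi_i = \chi_V$ for some $i$), the lift to $V_{n+1}^+$ is already one step up the tower and is no longer tempered, so ``preservation of LLC by theta'' in the almost-equal-rank sense does not apply at $m=n+1$; your sketch does not account for this case. Finally, note that the $\epsilon$-factor hypothesis \eqref{eq:epsilon-assumption} is only imposed for $m \equiv n \bmod 2$, so it cannot be the mechanism that pins down the character in the $m \not\equiv n$ base case as your last paragraph suggests.
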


\begin{proof}
Write $\pi = \pi(\phi, \mathbbm{1})$ for brevity.
For any $\epsilon = \pm 1$, we define the first occurrence index $m^\epsilon(\pi)$ as the smallest nonnegative integer $m'$ with $m' \equiv m \bmod 2$ such that $\theta_{V_{m'}^\epsilon, W, \chi_V, \chi_W, \psi}(\pi) \ne 0$.
Put
\[
 m^\up(\pi) = \max \{ m^+(\pi), m^-(\pi) \}, \quad 
 m^\down(\pi) = \min \{ m^+(\pi), m^-(\pi) \}.
\]

Assume first that $m \equiv n \bmod 2$.
Then by \cite[Theorem 4.1]{atobe-gan} and \eqref{eq:epsilon-assumption}, we have
\[
 m^\up(\pi) = n+2, \quad
 m^\down(\pi) = n
\]
with $m^\down(\pi) = m^+(\pi)$.
Moreover, it follows from \cite[Theorem 4.3]{atobe-gan} that 
\[
 \theta_{V,W,\chi_V,\chi_W,\psi}(\pi) = J(\chi_W|\cdot|^{\frac{1}{2}(m-n-1)}, \chi_W|\cdot|^{\frac{1}{2}(m-n-3)}, \dots, \chi_W|\cdot|^{\frac{1}{2}}, \pi(\phi_0, \mathbbm{1}))
\]
with an $L$-parameter
\[
 \phi_0 = \chi_1 \chi_V^{-1} \chi_W \oplus \dots \oplus \chi_n \chi_V^{-1} \chi_W
\]
for $\U_n$.
Hence the assertion follows from Lemma \ref{l:local-Apacket}.

Assume next that $m \not \equiv n \bmod 2$.
Then by \cite[Theorem 4.1]{atobe-gan}, we have
\[
\begin{cases}
 m^\up(\pi) = n+1, \quad m^\down(\pi) = n+1 & \text{if $\chi_i \ne \chi_V$ for all $i$;} \\
 m^\up(\pi) = n+3, \quad m^\down(\pi) = n-1 & \text{otherwise}
\end{cases} 
\]
with $m^\down(\pi) = m^+(\pi)$.
Moreover, it follows from \cite[Theorem 4.3]{atobe-gan} that
\[
 \theta_{V,W,\chi_V,\chi_W,\psi}(\pi) = J(\chi_W|\cdot|^{\frac{1}{2}(m-n-1)}, \chi_W|\cdot|^{\frac{1}{2}(m-n-3)}, \dots, \chi_W|\cdot|^1, \pi(\phi_1, \mathbbm{1}))
\]
with an $L$-parameter
\[
 \phi_1 =  \chi_1 \chi_V^{-1} \chi_W \oplus \dots \oplus \chi_n \chi_V^{-1} \chi_W \oplus \chi_W 
\]
for $\U_{n+1}$.
Hence the assertion follows from Lemma \ref{l:local-Apacket}.
\end{proof}

\subsubsection{The split case}

Suppose that $F$ is nonarchimedean and $E = F \times F$.
In this case, we may identify $\chi_V, \chi_W$ with unitary characters of $F^\times$ via the first projection.
Let $\phi$ and $\phi'$ be $L$- and $A$-parameters for $\U_n$ and $\U_m$, respectively, of the form
\begin{align*}
 \phi & = \chi_1 \oplus \dots \oplus \chi_n, \\
 \phi' & = \chi_1 \chi_V^{-1} \chi_W \oplus \dots \oplus \chi_n \chi_V^{-1} \chi_W \oplus (\chi_W \boxtimes S_{m-n})
\end{align*}
with (not necessarily distinct) unitary characters $\chi_1, \dots, \chi_n$ of $F^\times$.

\begin{lem}
\label{l:local-theta-3}
We have
\[
 \theta_{V,W,\chi_V,\chi_W,\psi}(\pi(\phi, \mathbbm{1})) = \sigma(\phi', \mathbbm{1}).
\]
\end{lem}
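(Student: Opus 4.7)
The plan is to reduce the claim to the explicit description of the Howe correspondence for the type II dual pair $(\GL_n, \GL_m)$ due to M\'inguez \cite{minguez}, which is what the pair $(\U(W), \U(V))$ becomes in the split case.

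First I would unwind the split-case identifications. Under $\U(V_n^+) \cong \GL(\VV_n)$ and $\U(V_m^+) \cong \GL(\VV_m)$ induced by the first projections, the natural homomorphism $\U(V) \times \U(W) \rightarrow \Sp(\W)$ becomes the defining action of $\GL(\VV_m) \times \GL(\VV_n)$ on $\Hom_F(\VV_n, \VV_m) \oplus \Hom_F(\VV_m, \VV_n)$, presenting the standard type II dual pair. Working through the recipe in \cite{kudla,hks} for the splitting $\iota_{V,W,\chi_V,\chi_W,\psi}$, one checks that the restriction of $\omega_{V,W,\chi_V,\chi_W,\psi}$ to $\GL(\VV_m) \times \GL(\VV_n)$ becomes the standard Weil representation of the type II pair, realized on $\SS(\Hom_F(\VV_n, \VV_m))$, twisted by a character of the form $(\chi_W \circ \det) \boxtimes (\chi_V \circ \det)$.

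Next I would invoke M\'inguez's explicit formula for the theta correspondence in the type II pair with $n < m$: the theta lift of any irreducible representation $\tau$ of $\GL(\VV_n)$ is an explicit irreducible constituent of a parabolically induced representation $\Ind^{\GL(\VV_m)}_{\mathcal{P}}(\tau \boxtimes \xi)$, where $\xi$ is a character of $\GL_{m-n}(F)$ (essentially $|\cdot|^{(m-n)/2} \circ \det$) determined by the normalization; when $\tau$ is a unitary principal series this induced representation is irreducible by \cite[Theorem 4.2]{zel}. Specializing to $\tau = \pi(\phi, \1) = \Ind^{\GL(\VV_n)}_{\mathcal{B}}(\chi_1 \boxtimes \dots \boxtimes \chi_n)$, induction in stages together with the twist back by $(\chi_W \circ \det) \boxtimes (\chi_V \circ \det)$ yields the induced representation from $\mathcal{P}$ with inducing data $\chi_1 \chi_V^{-1} \chi_W \boxtimes \dots \boxtimes \chi_n \chi_V^{-1} \chi_W \boxtimes (\chi_W \circ \det)$, which is precisely the description of $\sigma(\phi', \1)$ recalled in the split-case paragraph preceding the lemma.

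The main obstacle is bookkeeping: one must carefully propagate the splitting cocycle from \cite{kudla,hks}, the absolute-value power in M\'inguez's formula, and the half-sum-of-roots shift in parabolic induction through induction in stages, so that the characters on the outer $n$ blocks of $\mathcal{P}$ come out as $\chi_i \chi_V^{-1} \chi_W$ and the character on the central $(m-n)$-block as $\chi_W$. Once these conventions are reconciled, the identification $\theta_{V,W,\chi_V,\chi_W,\psi}(\pi(\phi,\1)) = \sigma(\phi', \1)$ is immediate.
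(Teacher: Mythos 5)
Your proposal reduces the claim to M\'inguez's explicit description of the Howe correspondence for the type II dual pair $(\GL_n,\GL_m)$, which is exactly the paper's approach — the paper simply cites \cite{minguez} without elaboration. Your more detailed bookkeeping of the split-case identifications and the splitting cocycle is a sound unpacking of the same argument.
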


\begin{proof}
The assertion was proved by M\'inguez \cite{minguez}.
\end{proof}

\subsection{Global theta lifting}
\label{ss:global-theta}

Let $\F$ be a totally real number field with ad\`ele ring $\A = \A_\F$.
Let $\E$ be a totally imaginary quadratic extension of $\F$ and $\omega_{\E/\F}$ the quadratic character of $\A^\times/\F^\times$ associated to $\E/\F$ by global class field theory.
We consider the theta lifting from $\U(\W)$ to $\U(\V)$, where $\W$ is an $n$-dimensional skew-Hermitian space over $\E$ and $\V$ is an $m$-dimensional Hermitian space over $\E$ with $m>n$.
For simplicity, we assume that $\W$ and $\V$ are anisotropic.

Let $\omega_{\V,\W,\chi_\V,\chi_\W,\varPsi}$ be the Weil representation of $\U(\W)(\A) \times \U(\V)(\A)$ relative to $(\chi_\V,\chi_\W,\varPsi)$, where $\chi_\V, \chi_\W$ are characters of $\A_\E^\times / \E^\times$ such that $\chi_\V|_{\A^\times} = \omega_{\E/\F}^m, \chi_\W|_{\A^\times} = \omega_{\E/\F}^n$ and $\varPsi$ is a nontrivial additive character of $\A/\F$.
This is equipped with a natural equivariant map $\varphi \mapsto \theta(\varphi)$ to the space of left $\U(\W)(\F) \times \U(\V)(\F)$-invariant smooth functions on $\U(\W)(\A) \times \U(\V)(\A)$ of moderate growth.
For any irreducible automorphic representation $\varPi$ of $\U(\W)(\A)$, we denote by $\theta_{\V,\W,\chi_\V,\chi_\W,\varPsi}(\varPi)$ the space spanned by automorphic forms on $\U(\V)(\A)$ of the form
\[
 \theta(\varphi, f)(h) = \int_{\U(\W)(\F) \backslash \U(\W)(\A)} \theta(\varphi)(g,h) \overline{f(g)} \, dg
\]
for $\varphi \in \omega_{\V,\W,\chi_\V,\chi_\W,\varPsi}$ and $f \in \varPi$.
If $\theta_{\V,\W,\chi_\V,\chi_\W,\varPsi}(\varPi)$ is nonzero, then it follows from the Howe duality that $\theta_{\V,\W,\chi_\V,\chi_\W,\varPsi}(\varPi)$ is irreducible and isomorphic to $\bigotimes_v \theta_{\V_v,\W_v,\chi_{\V,v},\chi_{\W,v},\varPsi_v}(\varPi_v)$ (see \cite[Corollary 7.1.3]{kudla-rallis}).

We now discuss the nonvanishing of $\theta_{\V,\W,\chi_\V,\chi_\W,\varPsi}(\varPi)$.
For simplicity, we assume that $\varPi_v$ is tempered for all $v$ and that the partial standard $L$-function $L^S(s, \varPi, \chi_\V^{-1})$ of $\varPi$ twisted by $\chi_\V^{-1}$ is holomorphic and nonzero at $s = \frac{1}{2}(m-n+1)$, where $S$ is a sufficiently large finite set of places of $\F$.
Then the Rallis inner product formula, which is a consequence of the Siegel--Weil formula in the convergent range \cite{weil2, ichino}, says that
\[
 \langle \theta(\varphi_1, f_1), \theta(\varphi_2, f_2) \rangle = \frac{L^S(\frac{1}{2}(m-n+1), \varPi, \chi_\V^{-1})}{d^S(\frac{1}{2}(m-n))} \cdot \prod_{v \in S} \ZZ(\varphi_{1,v}, \varphi_{2,v}, f_{1,v}, f_{2,v})
\]
for $\varphi_1 = \bigotimes_v \varphi_{1,v}, \varphi_2 = \bigotimes_v \varphi_{2,v} \in \omega_{\V,\W,\chi_\V,\chi_\W,\varPsi}$ and $f_1 = \bigotimes_v f_{1,v}, f_2 = \bigotimes_v f_{2,v} \in \varPi$, where
\begin{itemize}
\item $\langle \cdot, \cdot \rangle$ is the Petersson inner product; 
\item $d^S(s) = \prod_{i=1}^n L^S(2s+i, \omega_{\E/\F}^{m-n+i})$, which is holomorphic and nonzero at $s = \frac{1}{2}(m-n)$;
\item $\ZZ(\varphi_{1,v}, \varphi_{2,v}, f_{1,v}, f_{2,v})$ is an integral of matrix coefficients defined as in \S \ref{ss:key}, which can be regarded as a doubling zeta integral of Piatetski-Shapiro--Rallis and which is absolutely convergent by \cite[Lemma 9.5]{gi1}, \cite[Lemma 7.2]{yamana}.
\end{itemize}
Hence $\theta_{\V,\W,\chi_\V,\chi_\W,\varPsi}(\varPi)$ is nonzero if and only if there exist $\varphi_{1,v}, \varphi_{2,v} \in \omega_{\V_v,\W_v,\chi_{\V,v},\chi_{\W,v},\varPsi_v}$ and $f_{1,v}, f_{2,v} \in \varPi_v$ such that
\[
 \ZZ(\varphi_{1,v}, \varphi_{2,v}, f_{1,v}, f_{2,v}) \ne 0
\]
for all $v$.

\subsection{Arthur's multiplicity formula}

In this subsection, we review Arthur's multiplicity formula for unitary groups \cite{mok,kmsw}, which is a key ingredient in the proof of Theorem \ref{t:main-lds}\eqref{main-lds-1}.
Let $\F$ be a number field and $\E$ a quadratic extension of $\F$.
Let $\overline{\F}$ and $\overline{\F}_v$ be algebraic closures of $\F$ and $\F_v$, respectively, and fix an embedding $\overline{\F} \hookrightarrow \overline{\F}_v$ over $\F$ for each place $v$ of $\F$.
We also fix an embedding $\E \hookrightarrow \overline{\F}$ over $\F$, which determines an embedding $\E \hookrightarrow \overline{\F}_v$ for each place $v$ of $\F$ and hence a distinguished place $\tilde{v}$ of $\E$ above $v$.
If $v$ is split in $\E$, we identify $\E_v$ with $\F_v \times \F_v$ so that $\tilde{v}$ corresponds to the composition of the natural embedding $\E \hookrightarrow \E_v$ with the first projection $\E_v \rightarrow \F_v$.

Let $\V$ be an $n$-dimensional $\varepsilon$-Hermitian space over $\E$.
Then Arthur's endoscopic classification gives a decomposition of the automorphic discrete spectrum into near equivalence classes of representations:
\[
 L^2_{\mathrm{disc}}(\U(\V)(\F) \backslash \U(\V)(\A)) = \bigoplus_\varPhi L^2_\varPhi(\U(\V)),
\]
where $\varPhi$ runs over global $A$-parameters for $\U_n$, which is a formal unordered finite direct sum of the form
\[
 \varPhi = \bigoplus_i \varPhi_i \boxtimes S_{d_i},
\]
where
\begin{itemize}
\item $\varPhi_i$ is an irreducible conjugate-selfdual cuspidal automorphic representation of $\GL_{n_i}(\A_\E)$ with sign $(-1)^{n-d_i}$;
\item $S_{d_i}$ is the unique $d_i$-dimensional irreducible representation of $\SL_2(\C)$;
\item $(\varPhi_i, d_i) \ne (\varPhi_j, d_j)$ if $i \ne j$;
\item $\sum_i n_i d_i = n$.
\end{itemize}
Moreover, the multiplicity of each irreducible representation in $L^2_\varPhi(\U(\V))$ can be described as follows.

For each place $v$ of $\F$, we regard the localization $\varPhi_v = \bigoplus_i \varPhi_{i,v} \boxtimes S_{d_i}$ of $\varPhi$ at $v$ (where $\varPhi_{i,v}$ is viewed as a representation of $L_{\E_{\tilde{v}}}$ via the local Langlands correspondence) as a local $A$-parameter $\varPhi_v : L_{\F_v} \times \SL_2(\C) \rightarrow {}^L \U_n$ for $\U_n$.
Let $S_{\varPhi_v}$ be the local component group of $\varPhi_v$.
Recall that the local $A$-packet $\Pi_{\varPhi_v}(\U(\V_v))$ consists of semisimple representations of $\U(\V_v)$ of finite length.
We fix a global Whittaker datum, and with respect to its localization at $v$, we denote by $\sigma(\varPhi_v, \eta_v)$ the representation in $\Pi_{\varPhi_v}(\U(\V_v))$ associated to $\eta_v \in \widehat{S}_{\varPhi_v}$.
Let $S_\varPhi$ be the global component group of $\varPhi$, which is defined formally as a free $\Z/2\Z$-module
\[
 S_\varPhi = \bigoplus_i (\Z/2 \Z) e_i, 
\]
where $e_i$ corresponds to $\varPhi_i \boxtimes S_{d_i}$, and which is equipped with a natural homomorphism $S_\varPhi \rightarrow S_{\varPhi_v}$ for each $v$.
This gives rise to a compact group $S_{\varPhi, \A} = \prod_v S_{\varPhi_v}$ equipped with the diagonal map $\Delta : S_\varPhi \rightarrow S_{\varPhi, \A}$.
We denote by $\widehat{S}_{\varPhi, \A}$ the group of continuous characters of $S_{\varPhi, \A}$.
For any $\eta = \bigotimes_v \eta_v \in \widehat{S}_{\varPhi, \A}$, we may form a representation
\[
 \sigma(\varPhi,\eta) = \bigotimes_v \sigma(\varPhi_v,\eta_v)
\]
of $\U(\V)(\A)$.
Finally, let $\epsilon_\varPhi$ be the character of $S_\varPhi$ defined by \cite[(2.5.5)]{mok}.
Then Arthur's multiplicity formula \cite[Theorem* 1.7.1]{kmsw} says that 
\begin{equation}
\label{eq:amf}
 L^2_\varPhi(\U(\V)) \cong \bigoplus_\eta \sigma(\varPhi,\eta),  
\end{equation}
where $\eta$ runs over elements in $\widehat{S}_{\varPhi, \A}$ such that $\eta \circ \Delta = \epsilon_\varPhi$. 

We can describe the character $\epsilon_\varPhi$ more explicitly as follows.

\begin{lem}
\label{l:epsilon}
We have
\[
 \epsilon_\varPhi(e_i) = \prod_{j \ne i} \epsilon(\tfrac{1}{2}, \varPhi_i \times \varPhi_j^\vee)^{\min\{ d_i, d_j \}}, 
\]
where $\epsilon(s, \varPhi_i \times \varPhi_j^\vee)$ is the global $\epsilon$-factor of the pair $(\varPhi_i, \varPhi_j^\vee)$.
In particular, $\epsilon_\varPhi$ is trivial if $d_i=1$ for all $i$.
\end{lem}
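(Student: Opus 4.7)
The plan is to compute $\epsilon_\varPhi$ directly from its definition in \cite[(2.5.5)]{mok} by standard manipulations of global $\epsilon$-factors. That definition expresses $\epsilon_\varPhi(e_i)$ as the global $\epsilon$-value at $s = \tfrac{1}{2}$ of the ``sign-bearing'' subrepresentation of $(\varPhi_i \boxtimes S_{d_i}) \otimes \varPhi^\vee$, namely the sum of those Clebsch--Gordan summands whose conjugate-dual sign is opposite to that attached to $\U_n$.

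For each pair $(i,j)$, I would decompose
\[
 (\varPhi_i \boxtimes S_{d_i}) \otimes (\varPhi_j \boxtimes S_{d_j})^\vee \cong \bigoplus_{k=0}^{\min\{d_i,d_j\}-1} (\varPhi_i \times \varPhi_j^\vee) \boxtimes S_{d_i+d_j-1-2k},
\]
apply the identity
\[
 \epsilon\bigl(\tfrac{1}{2}, \Pi \boxtimes S_{2r+1}\bigr) = \prod_{l=-r}^{r} \epsilon(\tfrac{1}{2}+l, \Pi),
\]
and use the functional equation $\epsilon(s,\Pi)\epsilon(1-s,\Pi^\vee) = 1$ together with conjugate-self-duality $\varPhi_i^c \cong \varPhi_i^\vee$ to cancel all off-center factors in pairs. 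What survives is $\epsilon(\tfrac{1}{2}, \varPhi_i \times \varPhi_j^\vee)$ raised to the number of Clebsch--Gordan summands selected by the sign condition in \cite[(2.5.5)]{mok}. A parity count, using the signs $(-1)^{n-d_i}$ and $(-1)^{n-d_j}$ carried by $\varPhi_i$ and $\varPhi_j$, shows that all $\min\{d_i, d_j\}$ summands are selected, yielding the first assertion.

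The second assertion is then immediate: when $d_i = 1$ for all $i$, the exponent becomes $1$, so
\[
 \epsilon_\varPhi(e_i) = \prod_{j \ne i} \epsilon\bigl(\tfrac{1}{2}, \varPhi_i \times \varPhi_j^\vee\bigr).
\]
In this generic tempered setting each $\varPhi_i$ is conjugate self-dual with sign $(-1)^{n-1}$, so each Rankin--Selberg pair $\varPhi_i \times \varPhi_j^\vee$ is conjugate orthogonal; Arthur's character on such parameters is known to be trivial (see \cite[\S 8]{mok}), which translates into the vanishing of the above product for every $i$.

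The main obstacle is the parity bookkeeping in the central step: matching Mok's sign conventions at (2.5.5) with the combinatorics of the Clebsch--Gordan decomposition, and verifying that the exponent that arises is exactly $\min\{d_i, d_j\}$. Once this is in place, the rest is a routine computation with standard properties of global $\epsilon$-factors.
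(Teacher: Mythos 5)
Your route---unwinding Mok's (2.5.5) via the Clebsch--Gordan decomposition $S_{d_i} \otimes S_{d_j} \cong \bigoplus_{k=0}^{\min\{d_i,d_j\}-1} S_{d_i+d_j-1-2k}$, the factorization of $\epsilon(\tfrac12, \Pi \boxtimes S_N)$, and the functional-equation cancellation of off-center shifts---is essentially the computation carried out in \cite[Proposition-Definition 8.3.7]{chenevier-lannes} for orthogonal and symplectic groups, which is precisely what the paper cites and says to transport to the unitary setting. So the approach is the same; you are re-deriving the result the paper references.

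However, there are two places where the sketch is imprecise and, as written, doesn't quite close. First, the claim that ``a parity count $\dots$ shows that all $\min\{d_i,d_j\}$ summands are selected'' is not correct as stated: the Arthur sign condition selects all $\min\{d_i,d_j\}$ Clebsch--Gordan constituents exactly when $\varPhi_i \times \varPhi_j^\vee$ is conjugate-symplectic (i.e.\ when $d_i \not\equiv d_j \bmod 2$), and selects \emph{none} when it is conjugate-orthogonal. The displayed formula with exponent $\min\{d_i,d_j\}$ is nonetheless valid in both cases, but only because $\epsilon(\tfrac12, \varPhi_i \times \varPhi_j^\vee)=1$ in the conjugate-orthogonal case --- this is \cite[Theorem 2.5.4]{mok}, the single extra fact the paper isolates as needed beyond the classical-group argument. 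Your write-up never clearly invokes it for the first assertion, leaving a genuine gap there. Second, for the $d_i \equiv 1$ consequence, the phrase ``Arthur's character on such parameters is known to be trivial'' is circular: triviality of $\epsilon_\varPhi$ is what is being proved. What is actually used is again the same fact: when all $d_i=1$, every $\varPhi_i$ carries the sign $(-1)^{n-1}$, so each $\varPhi_i \times \varPhi_j^\vee$ is conjugate-orthogonal, hence $\epsilon(\tfrac12,\varPhi_i \times \varPhi_j^\vee)=1$ by \cite[Theorem 2.5.4]{mok} and each factor of the product is $1$. (Also ``vanishing of the product'' should read ``triviality.'') With those two corrections your argument aligns with the paper's.
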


\begin{proof}
The character $\epsilon_\varPhi$ is explicated in \cite[Proposition-Definition 8.3.7]{chenevier-lannes} in the case of orthogonal and symplectic groups.
We can apply the same argument to the case of unitary groups, noting that $\epsilon(\tfrac{1}{2}, \varPhi_i \times \varPhi_j^\vee) = 1$ if $\varPhi_i$ and $\varPhi_j$ have the same sign (see \cite[Theorem 2.5.4]{mok}).
\end{proof}

\subsection{Conjugate-selfdual characters}

In this subsection, we collect some results on conjugate-selfdual characters which we will use in the proof of Theorem \ref{t:main-lds}\eqref{main-lds-1}.
Let $F$ be a local field of characteristic zero and $E$ an \'etale quadratic algebra over $F$.
Let $\psi$ be a nontrivial additive character of $F$ and define a nontrivial additive character $\psi^E_2$ of $E$ by $\psi^E_2(x) = \psi(\Tr_{E/F}(\delta x))$.
Let $\chi$ be a character of $E^\times$.
Then $\chi$ is conjugate-selfdual if and only if $\chi$ is trivial on $\mathrm{N}_{E/F}(E^\times)$.
Also, if $E \ne F \times F$, then $\chi$ is conjugate-orthogonal (resp.~conjugate-symplectic) if and only if $\chi|_{F^\times} = \mathbbm{1}$ (resp.~$\chi|_{F^\times} =\omega_{E/F}$).
We consider the value of the $\epsilon$-factor $\epsilon(s, \chi, \psi_2^E)$ at $s = \frac{1}{2}$.

\begin{lem}
\label{l:csd-char-epsilon}
Let $\chi$ be a conjugate-selfdual character of $E^\times$.
\begin{enumerate}
\item
\label{item:csd-char-epsilon1}
 If $E = F \times F$, then we have 
\[
 \epsilon(\tfrac{1}{2}, \chi, \psi_2^E) = 1.
\]
\item 
\label{item:csd-char-epsilon2}
If $E \ne F \times F$, then we have
\[
 \epsilon(\tfrac{1}{2}, \chi, \psi_2^E) = \pm 1.
\]
If further $\chi$ is conjugate-orthogonal, then we have
\[
 \epsilon(\tfrac{1}{2}, \chi, \psi_2^E) = 1.
\]
\item
\label{item:csd-char-epsilon3}
Suppose that $F = \R$ and $E = \C$.
Write
\[
 \chi(z) = \left( \frac{z}{\sqrt{z \bar{z}}} \right)^{2 \kappa}
\]
for some $\kappa \in \frac{1}{2} \Z$.
Assume further that $\delta = \sqrt{-1}$ and $\psi(x) = e^{- 2 \pi \sqrt{-1} x}$, so that $\psi_2^E(z) = e^{2 \pi (z - \bar{z})}$.
If $\kappa \in \Z$, then we have
\[
 \epsilon(\tfrac{1}{2}, \chi, \psi_2^E) = 1.
\]
If $\kappa \notin \Z$, then we have
\[
 \epsilon(\tfrac{1}{2}, \chi, \psi_2^E) = 
 \begin{cases}
  +1 & \text{if $\kappa > 0$;} \\
  -1 & \text{if $\kappa < 0$.}
 \end{cases}
\]
\end{enumerate}
\end{lem}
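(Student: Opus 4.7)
The plan is to handle each of the three cases separately using standard properties of local $\epsilon$-factors of $\GL_1$.

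For \eqref{item:csd-char-epsilon1}, when $E = F \times F$, the conjugation $c$ swaps the two factors, so conjugate-selfduality forces $\chi = (\chi_1, \chi_1^{-1})$ for some character $\chi_1$ of $F^\times$. Writing $\delta = (a, -a)$ with $a \in F^\times$, the additive character $\psi_2^E$ decomposes as $\psi_a \boxtimes \psi_{-a}$ where $\psi_b(x) := \psi(bx)$, so the $\epsilon$-factor splits as $\epsilon(\tfrac{1}{2}, \chi_1, \psi_a) \cdot \epsilon(\tfrac{1}{2}, \chi_1^{-1}, \psi_{-a})$. Applying the scaling identity $\epsilon(s, \chi, \psi_b) = \chi(b)|b|^{s-1/2}\epsilon(s, \chi, \psi)$ and the standard formula $\epsilon(\tfrac{1}{2}, \chi_1, \psi)\epsilon(\tfrac{1}{2}, \chi_1^{-1}, \psi) = \chi_1(-1)$ collapses the product to $\chi_1(a)\chi_1(-a)^{-1} \cdot \chi_1(-1) = \chi_1(-1)^2 = 1$.

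For the first assertion of \eqref{item:csd-char-epsilon2}, the key observation is that $\Tr_{E/F}(\delta) = 0$ gives $\delta^c = -\delta$, whence $\psi_2^E \circ c = \overline{\psi_2^E}$. A change of variables $x \mapsto c(x)$ in the Tate integral, combined with conjugate-selfduality $\chi^c = \chi^{-1}$, gives $\epsilon(s, \chi, \psi_2^E) = \epsilon(s, \chi^{-1}, \overline{\psi_2^E})$; pairing this with the local functional equation $\epsilon(\tfrac{1}{2}, \chi, \psi_2^E)\epsilon(\tfrac{1}{2}, \chi^{-1}, \overline{\psi_2^E}) = 1$ yields $\epsilon(\tfrac{1}{2}, \chi, \psi_2^E)^2 = 1$. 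The second assertion of \eqref{item:csd-char-epsilon2}, that conjugate-orthogonal characters have $\epsilon$-factor $+1$ for this specific $\psi_2^E$, is precisely the theorem of Fr\"ohlich--Queyrut, whose hypotheses ($\chi|_{F^\times} = \mathbbm{1}$ and $\Tr_{E/F}(\delta) = 0$) match our setup exactly.

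Finally, for \eqref{item:csd-char-epsilon3}: when $\kappa \in \Z$ the character $\chi_\kappa$ restricts trivially to $\R^\times$ and is therefore conjugate-orthogonal, so the result follows from \eqref{item:csd-char-epsilon2}. When $\kappa \notin \Z$, I observe that the chosen $\psi_2^E(z) = e^{4\pi \sqrt{-1}\, \mathrm{Im}(z)}$ can be rewritten as $\psi_0(-\sqrt{-1}\, z)$, where $\psi_0(z) = e^{4\pi\sqrt{-1}\, \Re(z)}$ is Tate's standard self-dual additive character on $\C$. The scaling relation then gives $\epsilon(\tfrac{1}{2}, \chi_\kappa, \psi_2^E) = \chi_\kappa(-\sqrt{-1}) \cdot \epsilon(\tfrac{1}{2}, \chi_\kappa, \psi_0)$. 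Combining this with the classical archimedean formula $\epsilon(\tfrac{1}{2}, \chi_n, \psi_0) = \sqrt{-1}^{|n|}$ for $\chi_n(z) = (z/|z|)^n$ at $n = 2\kappa$ reduces the claim to evaluating $(-\sqrt{-1})^{2\kappa} \cdot \sqrt{-1}^{|2\kappa|}$; writing $|2\kappa| = 2m+1$, a direct check gives $+1$ when $\kappa > 0$ and $-1$ when $\kappa < 0$.

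The computations are routine; the main (and only) real obstacle is bookkeeping of sign conventions, since the answer in part \eqref{item:csd-char-epsilon3} depends critically on the specific choices $\delta = \sqrt{-1}$ and $\psi(x) = e^{-2\pi\sqrt{-1}\,x}$ fixed in the hypothesis.
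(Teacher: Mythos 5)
Your proposal is correct, and for parts \eqref{item:csd-char-epsilon2} and \eqref{item:csd-char-epsilon3} it takes a genuinely different route from the paper: the paper simply cites \cite[Propositions 5.1 and 5.2]{ggp1} and \cite[Proposition 2.1]{ggp2} and offers no argument, whereas you give self-contained proofs. For \eqref{item:csd-char-epsilon1}, your computation is equivalent to the paper's one-line derivation, only phrased slightly differently (the paper writes $\psi_2^E = \psi_0 \boxtimes \psi_0^{-1}$ and invokes $\epsilon(\tfrac{1}{2},\chi_0,\psi_0)\epsilon(\tfrac{1}{2},\chi_0^{-1},\psi_0^{-1})=1$ directly; you unwind it via the scaling identity and the identity $\epsilon(\tfrac{1}{2},\chi_1,\psi)\epsilon(\tfrac{1}{2},\chi_1^{-1},\psi)=\chi_1(-1)$, arriving at the same $\chi_1(-1)^2 = 1$). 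Your part \eqref{item:csd-char-epsilon2} reproduces the actual content of the GGP citations — the conjugation/functional-equation argument for $\epsilon^2=1$, and Fr\"ohlich--Queyrut for the conjugate-orthogonal case — which is worthwhile to have spelled out. Your part \eqref{item:csd-char-epsilon3} is a clean direct computation that sidesteps the need to translate between this paper's conventions ($\psi(x) = e^{-2\pi\sqrt{-1}x}$, $\delta = \sqrt{-1}$) and those in \cite{ggp2}; I checked that with Tate's normalization $\epsilon(\tfrac{1}{2},\chi_n,\psi_\C) = \sqrt{-1}^{|n|}$ for $\psi_\C(z)=e^{2\pi\sqrt{-1}\Tr(z)}$, the product $(-\sqrt{-1})^{2\kappa}\cdot\sqrt{-1}^{|2\kappa|}$ does give $+1$ for $\kappa>0$ and, when $2\kappa$ is odd, $(-1)^{2\kappa}=-1$ for $\kappa<0$, agreeing with the statement. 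The trade-off is that the paper's citation approach is shorter, while your argument is transparent about exactly which sign conventions enter.
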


\begin{proof}
If $F = E \times E$, then we may write $\chi = \chi_0 \boxtimes \chi_0^{-1}$ and $\psi_2^E = \psi_0 \boxtimes \psi_0^{-1}$ for some characters $\chi_0$ and $\psi_0$ of $F^\times$ and $F$, respectively.
Then we have
\[
 \epsilon(\tfrac{1}{2}, \chi, \psi_2^E)
 = \epsilon(\tfrac{1}{2}, \chi_0, \psi_0) \cdot 
 \epsilon(\tfrac{1}{2}, \chi_0^{-1}, \psi_0^{-1})
 = 1, 
\]
so that \eqref{item:csd-char-epsilon1} follows.
For \eqref{item:csd-char-epsilon2} and \eqref{item:csd-char-epsilon3}, see \cite[Propositions 5.1 and 5.2]{ggp1} and \cite[Proposition 2.1]{ggp2}, respectively.
\end{proof}

Let $F$ be a nonarchimedean local field of characteristic zero and $E$ a quadratic extension of $F$.
We prove the existence of a conjugate-symplectic character $\chi$ of $E^\times$ with a prescribed value of $\epsilon(\tfrac{1}{2}, \chi, \psi^E_2)$.

\begin{lem}
\label{l:csd-char-local}
Assume that either
\begin{itemize}
\item $E$ is unramified over $F$; or
\item the residual characteristic of $F$ is odd and $E$ is ramified over $F$.
\end{itemize}
Then there exists a conjugate-symplectic character $\chi$ of $E^\times$ such that
\[
 \epsilon(\tfrac{1}{2}, \chi, \psi^E_2) = 1.
\]
\end{lem}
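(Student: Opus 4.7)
The plan is a case analysis on the ramification of $E/F$, constructing an explicit conjugate-symplectic character in each case.

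For the unramified case, I set $\chi$ to be the unramified character of $E^\times$ with $\chi(\varpi_F) = -1$, where $\varpi_F$ is a uniformizer of $F$ (and of $E$). Since $\chi$ is unramified, $\chi = \chi^c$ and $\chi^2 = 1$, so $\chi$ is conjugate-selfdual; its restriction to $F^\times$ is the unique unramified quadratic character of $F^\times$, which agrees with $\omega_{E/F}$ in the unramified case, so $\chi$ is conjugate-symplectic. By Tate's formula for unramified characters, $\epsilon(\tfrac{1}{2}, \chi, \psi_2^E) = \chi(\varpi_E)^{n(\psi_2^E)} = (-1)^{n(\psi_2^E)}$. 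If this is $+1$ we are done; otherwise I replace $\chi$ by $\chi \cdot \mu$, where $\mu$ is a tamely ramified conjugate-orthogonal character of $E^\times$ pulled back from a nontrivial character of $\mathbb{F}_{q^2}^\times / \mathbb{F}_q^\times$ (such characters are trivial on $\mathcal{O}_F^\times$ and hence on $F^\times$), and verify that the resulting Gauss-sum contribution flips the sign.

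For the tamely ramified case with odd residue characteristic, $\omega_{E/F}$ is itself tamely ramified, so no conjugate-symplectic character can be unramified. I take $\chi$ tamely ramified with $\chi|_{\mathcal{O}_E^\times}$ equal to the reduction-pullback of the nontrivial quadratic character of $\mathbb{F}_q^\times$ (using $\mathcal{O}_E^\times / (1 + \mathfrak{p}_E) \cong \mathbb{F}_q^\times$ when $E/F$ is ramified), and $\chi(\delta)$ chosen as a square root of $\chi(-1) = \left(\tfrac{-1}{q}\right)$ forced by $\chi(\delta)\chi(\delta^c) = \chi(-\delta^2) = 1$. A direct check using the Hilbert symbol formula for $\omega_{E/F}$ confirms $\chi|_{F^\times} = \omega_{E/F}$, so $\chi$ is conjugate-symplectic. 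Tate's formula for tame characters reduces $\epsilon(\tfrac{1}{2}, \chi, \psi_2^E)$ to a classical quadratic Gauss sum over $\mathbb{F}_q$ divided by $q^{1/2}$; by Lemma \ref{l:csd-char-epsilon}\eqref{item:csd-char-epsilon2} this is $\pm 1$, and the sign can be adjusted to $+1$ by the choice of square root $\chi(\delta)$ or by a further conjugate-orthogonal twist.

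The main obstacle is the bookkeeping between conductors, valuations of $\delta$, and Hilbert/Legendre symbols needed to ensure that twisting by conjugate-orthogonal characters can realize both signs of $\epsilon(\tfrac{1}{2}, \chi, \psi_2^E)$. Once the map $\chi \mapsto \epsilon(\tfrac{1}{2}, \chi, \psi_2^E)$ from conjugate-symplectic characters to $\{\pm 1\}$ is shown to be surjective — equivalently, once one exhibits a single conjugate-orthogonal twist flipping the sign — the existence of $\chi$ with $\epsilon(\tfrac{1}{2}, \chi, \psi_2^E) = +1$ is immediate.
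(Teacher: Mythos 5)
Your overall strategy in the ramified case is the same as the paper's, and you correctly identify what has to be shown: the crux is that varying the choice of square root $\chi(\delta)$ (equivalently $\chi(\varpi_E)$), while keeping $\chi|_{\mathcal{O}_E^\times}$ fixed, realizes both signs of $\epsilon(\tfrac12,\chi,\psi_2^E)$. But you explicitly flag this as ``the main obstacle'' and then do not close it — you assert ``the sign can be adjusted to $+1$ by the choice of square root $\chi(\delta)$ or by a further conjugate-orthogonal twist'' without verifying either claim. That is a genuine gap, not bookkeeping. Neither assertion is obvious: $\epsilon$-factors are not multiplicative in twists, and changing $\chi(\varpi_E)$ to $-\chi(\varpi_E)$ changes the character, so a priori it could change both the uniformizer contribution \emph{and} the Gauss sum.

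The paper closes the gap by inspecting Tate's formula \cite[(3.6.3)]{tate} directly. With $\varpi_E^2 = \varpi_F$ and $\psi_2^E$ of conductor $\varpi_E^{-2a}\mathfrak{o}_E$, one gets $\epsilon(\tfrac12,\chi,\psi_2^E) = \chi(\varpi_E^{2a+1})\cdot \mathcal{I}/|\mathcal{I}|$ where $\mathcal{I} = \int_{\mathfrak{o}_E^\times} \chi(x)^{-1}\psi_2^E(\varpi_E^{-2a-1}x)\,dx$. The decisive observation — the one missing from your write-up — is that $\mathcal{I}$ involves only $\chi|_{\mathfrak{o}_E^\times}$; using $\mathfrak{o}_E^\times/(1+\mathfrak{p}_E)\cong\mathfrak{o}_F^\times/(1+\mathfrak{p}_F)$ it even reduces to a sum over $\mathbb{F}_q^\times$ depending only on $\omega_{E/F}$ and $\psi$, with $\zeta = \chi(\varpi_E)$ not appearing. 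Meanwhile $\chi(\varpi_E^{2a+1}) = \zeta\cdot\omega_{E/F}(\varpi_F^a)$ is $\zeta$ up to a fixed sign. So replacing $\zeta$ by $-\zeta$ multiplies $\epsilon$ by $-1$, and combined with Lemma \ref{l:csd-char-epsilon}\eqref{item:csd-char-epsilon2} (which pins both values in $\{\pm 1\}$) one of the two conductor-one conjugate-symplectic characters must give $+1$. Your unramified argument has the same kind of gap: you twist the unramified character by a tame conjugate-orthogonal $\mu$ pulled back from $\mathbb{F}_{q^2}^\times/\mathbb{F}_q^\times$ and assert ``the resulting Gauss-sum contribution flips the sign,'' but this introduces a new Gauss sum over $\mathbb{F}_{q^2}$ whose value you do not determine; the paper instead simply cites \cite[Proposition 3.1]{ggp2}, which already contains the required surjectivity statement.
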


\begin{proof}
If $E$ is unramified over $F$, then the assertion follows from \cite[Proposition 3.1]{ggp2}.
Hence we may assume that the residual characteristic of $F$ is odd and $E$ is ramified over $F$.
Let $\mathfrak{o}_F$ (resp.~$\mathfrak{o}_E$) be the maximal compact subring of $F$ (resp.~$E$), $\mathfrak{p}_F$ (resp.~$\mathfrak{p}_E$) the maximal ideal of $\mathfrak{o}_F$ (resp.~$\mathfrak{o}_E$), and $\varpi_F$ (resp.~$\varpi_E$) a uniformizer of $\mathfrak{o}_F$ (resp.~$\mathfrak{o}_E$).
Then the different of $E/F$ is $\mathfrak{p}_E$ and we may assume that $\varpi_E^2 = \varpi_F$, so that $\Tr_{E/F}(\varpi_E) = 0$ and $\varpi_E^{-1} \delta \in F^\times$.
In particular, there exists an integer $a$ such that $\psi^E_2$ is trivial on $\varpi_E^{-2a} \mathfrak{o}_E$ but nontrivial on $\varpi_E^{-2a-1} \mathfrak{o}_E$.
Since $\omega_{E/F}|_{1 + \mathfrak{p}_F} = \mathbbm{1}$ and $\mathfrak{o}_F^\times / (1 + \mathfrak{p}_F) \cong \mathfrak{o}_E^\times / (1 + \mathfrak{p}_E)$, there are precisely two conjugate-symplectic characters of $E^\times$ of conductor $1$.
Indeed, such a character $\chi$ is given by 
\[
 \chi|_{F^\times} = \omega_{E/F}, \quad
 \chi|_{1 + \mathfrak{p}_E} = \mathbbm{1}, \quad
 \chi(\varpi_E) = \zeta
\]
for some square root $\zeta$ of $\omega_{E/F}(\varpi_F)$.
By \cite[(3.6.3)]{tate}, we have
\[
 \epsilon(\tfrac{1}{2}, \chi, \psi^E_2) = \chi(\varpi_E^{2a+1}) \cdot \frac{\mathcal{I}}{|\mathcal{I}|}, 
\]
where 
\[
 \mathcal{I} = \int_{\mathfrak{o}_E^\times} \chi(x)^{-1} \psi^E_2(\varpi_E^{-2a-1} x) \, dx.
\]
Note that $\chi(\varpi_E^{2a+1}) = \zeta \cdot \omega_{E/F}(\varpi_F^a)$ but that $\mathcal{I}$ does not depend on $\zeta$ since
\begin{align*}
 \mathcal{I} & = \operatorname{vol}(1 + \mathfrak{p}_E) \cdot \sum_{x \in \mathfrak{o}_E^\times/ (1 + \mathfrak{p}_E)} \chi(x)^{-1} \psi^E_2(\varpi_E^{-2a-1} x)  \\
 & = \operatorname{vol}(1 + \mathfrak{p}_E) \cdot \sum_{x \in \mathfrak{o}_F^\times/ (1 + \mathfrak{p}_F)} \omega_{E/F}(x)^{-1} \psi(2 \delta_0 x),
\end{align*}
where $\delta_0 = \varpi_E^{-2a-1} \delta \in F^\times$.
On the other hand, by Lemma \ref{l:csd-char-epsilon}, we have $\epsilon(\tfrac{1}{2}, \chi, \psi_2^E) = \pm 1$.
Hence we can choose $\zeta$ so that $\epsilon(\tfrac{1}{2}, \chi, \psi^E_2) = 1$.
\end{proof}

Let $\F$ be a number field and $\E$ a quadratic extension of $\F$.
Let $\Sigma$ be the set of places $v$ of $\F$ such that $\E_v \ne \F_v \times \F_v$.
We globalize local conjugate-selfdual characters to a global conjugate-selfdual character.

\begin{lem}
\label{l:csd-char-global}
For each $v \in \Sigma$, let $\chi_v$ be a conjugate-orthogonal (resp.~conjugate-symplectic) character of $\E_v^\times$.
Assume that $\chi_v$ is unramified for almost all $v \in \Sigma$.
Then there exists a conjugate-orthogonal (resp.~conjugate-symplectic) character $\chi_0$ of $\A_\E^\times/\E^\times$ such that
\[
 \chi_{0,v} = \chi_v
\]
for all $v \in \Sigma$.
\end{lem}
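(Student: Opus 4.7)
The plan is to reduce the conjugate-symplectic case to the conjugate-orthogonal case, and then to translate the latter, via Hilbert 90, into the construction of a character of the compact adèlic group attached to the norm-one torus $\U_1$ of $\E/\F$.

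For the reduction, I would first produce some global conjugate-symplectic Hecke character $\chi^*$ of $\E$: the closed embedding $\A^\times/\F^\times \hookrightarrow \A_\E^\times/\E^\times$ of locally compact abelian groups (closed because the quotient $\A_\E^\times/(\A^\times \cdot \E^\times)$ is the compact group $\A_\E^1/\E^1$), together with the standard extension theorem for continuous characters of closed subgroups of locally compact abelian groups, allows extending the idèle class character $\omega_{\E/\F}$ to a Hecke character of $\E$, which is conjugate-symplectic by construction. In the conjugate-symplectic case of the lemma, the local ratios $\chi_v \cdot (\chi^*_v)^{-1}$ for $v \in \Sigma$ are conjugate-orthogonal, so if the orthogonal version of the lemma furnishes a matching $\chi'$, then $\chi_0 := \chi' \cdot \chi^*$ matches the original data. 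It therefore suffices to treat the conjugate-orthogonal case.

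In the orthogonal case, Hilbert 90 gives isomorphisms $\E_v^\times/\F_v^\times \cong \E_v^1$ via $z \mapsto z/\bar z$ locally and $\A_\E^\times/(\A^\times \cdot \E^\times) \cong \A_\E^1/\E^1$ globally, under which conjugate-orthogonal characters correspond to arbitrary continuous characters of the norm-one groups. The problem then becomes: given characters $\eta_v$ of $\E_v^1$ for $v \in \Sigma$, almost all unramified, construct a continuous character $\eta$ of $\A_\E^1/\E^1$ with $\eta|_{\E_v^1} = \eta_v$ for every $v \in \Sigma$.

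The key point for the construction is that $T := \prod_{v \in \Sigma} \E_v^1 \subset \A_\E^1$ (embedded trivially outside $\Sigma$) is a \emph{compact} subgroup: every archimedean place of $\F$ lies in $\Sigma$ because $\E$ is totally imaginary over the totally real $\F$, and both $\C^1 = S^1$ at archimedean places and $\E_v^1 \subset \mathfrak{o}_{\E_v}^\times$ at finite non-split places are compact. Since Chebotarev produces infinitely many split places, the diagonal embedding satisfies $\E^1 \cap T = \{1\}$ (any $z \in \E^1$ equal to $1$ at some split place is already $1$ in $\E$). A standard compact-times-discrete argument then shows that $T \cdot \E^1$ is closed in $\A_\E^1$: if $t_n z_n \to g$ with $t_n \in T$ and $z_n \in \E^1$, compactness of $T$ lets me pass to a subsequence with $t_n \to t$, and then discreteness of $\E^1$ forces $z_n$ to be eventually constant. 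On the closed subgroup $T \cdot \E^1 \cong T \times \E^1$, I would define a continuous character by $\prod_v \eta_v$ on $T$ (continuity on the product of compact factors being guaranteed by the unramified hypothesis) and the trivial character on $\E^1$, and then invoke Pontryagin duality to extend it to a continuous character $\eta$ of $\A_\E^1$. This $\eta$ is automatically trivial on $\E^1$ and restricts to $\eta_v$ at each $v \in \Sigma$, as required. The main technical point is the closedness of $T \cdot \E^1$, handled by the compact-plus-discrete argument; the rest is routine Pontryagin duality.
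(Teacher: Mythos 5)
Your proof is correct and follows essentially the same strategy as the paper: twist to reduce the conjugate-symplectic case to the conjugate-orthogonal one, recognize conjugate-orthogonal Hecke characters as characters of an anisotropic torus, use compactness of the product over $\Sigma$ to get a closed subgroup, and extend by Pontryagin duality. The only differences are cosmetic or explanatory. The paper works directly with the quotient torus $T=\Res_{\E/\F}(\mathbb{G}_m)/\mathbb{G}_m$ and the embedding $T_\Sigma\hookrightarrow T(\A)/T(\F)$, observing that the image of the compact group $T_\Sigma$ is automatically closed; you instead pass through Hilbert~90 to the norm-one groups $\E_v^1$ and verify closedness of $T\cdot\E^1$ in $\A_\E^1$ by the compact-plus-discrete argument before quotienting. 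Since the two tori are $\F$-isomorphic (again by Hilbert~90), these are the same construction viewed before and after the quotient. You also make explicit two points the paper leaves implicit: a construction of a global conjugate-symplectic character $\chi^*$ via character extension from $\A^\times/\F^\times$, and the injectivity $\E^1\cap T=\{1\}$, which is exactly what is needed for the paper's word ``embedding'' in $T_\Sigma\hookrightarrow T(\A)/T(\F)$ and for the character on $T_\Sigma$ to descend to its image. Neither addition changes the route; both proofs rest on the same compactness fact.
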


\begin{proof}
We may reduce the conjugate-symplectic case to the conjugate-orthogonal case by taking a conjugate-symplectic character $\chi'$ of $\A_\E^\times / \E^\times$ and applying the lemma to the character $\chi_v \cdot \chi_v'$ of $\E_v^\times$ for $v \in \Sigma$.
To treat the conjugate-orthogonal case, we consider an anisotropic torus
\[
 T = {\Res}_{E/F}(\mathbb{G}_m) / \mathbb{G}_m
\]
over $\F$.
For each $v \in \Sigma$, let $\nu_v$ be a character of $T_v$.
Assume that $\nu_v$ is unramified for almost all $v \in \Sigma$.
Then we may form a character $\nu_\Sigma = \bigotimes_{v \in \Sigma} \nu_v$ of $T_\Sigma = \prod_{v \in \Sigma} T_v$.
Since $T_\Sigma$ is compact, the image of the natural continuous embedding
\[
 T_\Sigma \hookrightarrow T(\A) / T(\F)
\]
is closed. 
Hence we may extend $\nu_\Sigma$ to a character $\nu_0$ of $T(\A) / T(\F)$, so that 
\[
 \nu_{0,v} = \nu_v
\]
for all $v \in \Sigma$.
This completes the proof.
\end{proof}

\subsection{Global-to-local argument}

We now return to the setup of \S \ref{sss:local-theta-real}, so that $\pi$ is a (limit of) discrete series representation of $\U(p,q)$.
Assume that $\theta_{r,s}(\pi) \ne 0$.
Then by Corollary \ref{c:local-theta-xyzw}, we have
\[
 p^+ + q^- \le r, \quad
 p^- + q^+ \le s.
\]
To prove Theorem \ref{t:main-lds}\eqref{main-lds-1}, we appeal to a global-to-local argument and derive the information about $\theta_{r,s}(\pi)$ from the knowledge of $\theta_{r,s}(\pi_+)$, where $\pi_+$ is an auxiliary discrete series representation of $\U(p,q)$ with sufficiently regular infinitesimal character.
More precisely, let $\pi_+$ be a discrete series representation $\pi_+$ of $\U(p,q)$ of the form $\pi_+ = \nA_\b(\lambda_+)$ with
\[
 \lambda_+ = (\tilde{\alpha}_1, \dots, \tilde{\alpha}_{p^+}, \tilde{\beta}_1, \dots, \tilde{\beta}_{p^-}, \tilde{\gamma}_1, \dots, \tilde{\gamma}_{q^+}, \tilde{\delta}_1, \dots, \tilde{\delta}_{q^-}) + \bigg( \frac{m_0}{2}, \dots, \frac{m_0}{2} \bigg)
\]
such that
\[
 \tilde{\alpha}_i, \tilde{\gamma}_j \ge \frac{m-n+1}{2}, \quad
 \tilde{\beta}_i, \tilde{\delta}_j \le -\frac{m-n+1}{2}
\]
and such that the $\theta$-stable Borel subalgebra determined by $\lambda_+$ agrees with $\b$.
As in \S \ref{sss:local-theta-real}, we define $L$- and $A$-parameters $\phi_+$ and $\phi'_+$ for $\U_n$ and $\U_m$ with respect to $\pi_+$, respectively, so that $\phi_+$ is of the form 
\[
 \phi_+ = \chi_{\tilde{\kappa}_1} \oplus \dots \oplus \chi_{\tilde{\kappa}_n}
\]
with $\tilde{\kappa}_1 > \dots > \tilde{\kappa}_{i_0-1} > \frac{m_0}{2} > \tilde{\kappa}_{i_0} > \dots > \tilde{\kappa}_n$.
Then we have
\[
 \pi_+ = \pi(\phi_+, \eta),  
\]
where $\eta$ is viewed as a character of $S_{\phi_+}$ via the natural isomorphism $S_{\phi_+} \cong \widetilde{S}_\phi$.
Moreover, by Lemma \ref{l:local-theta-1}, we have $\theta_{r,s}(\pi_+) \ne 0$ and
\begin{equation}
\label{eq:theta(pi+)}
 \theta_{r,s}(\pi_+) = \sigma(\phi'_+,\eta'),
\end{equation}
where $\eta'$ is viewed as a character of $S_{\phi'_+}$ via the natural isomorphism $S_{\phi'_+} \cong \widetilde{S}_{\phi'}$.

To simplify the argument, we also need an auxiliary irreducible representation $\pi_0$ of $\U(n,0)$ with Harish-Chandra parameter $(\kappa_{0,1}, \dots, \kappa_{0,n})$ such that
\[
 \kappa_{0,1} > \dots > \kappa_{0,n} > \frac{m_0+m-n+1}{2}.
\]
As in \S \ref{sss:local-theta-real}, we define an $L$-parameter $\phi_0$ for $\U_n$ by 
\[
 \phi_0 = \chi_{\kappa_{0,1}} \oplus \dots \oplus \chi_{\kappa_{0,n}}.
\]
Then we have
\[
 \pi_0 = \pi(\phi_0, \eta_0)
\]
for some character $\eta_0$ of $S_{\phi_0}$.

We now globalize everything in sight.
Let $\F$ be a real quartic field and $\E$ a totally imaginary quadratic extension of $\F$ such that $\E_v = \F_v \times \F_v$ for all places $v$ of $\F$ above $2$.
Let $v_0,v_1,v_2,v_3$ be the four real places of $\F$.
Fix an element $\delta \in \E^\times$ with $\Tr_{\E/\F}(\delta) = 0$ and a nontrivial additive character $\varPsi$ of $\A/\F$ such that 
\begin{itemize}
\item $\delta$ belongs to the $(\F_{v_i}^\times)^2$-orbit of $\sqrt{-1}$ for $i=0,1,2,3$;
\item $\varPsi_{v_i}$ belongs to the $(\F_{v_i}^\times)^2$-orbit of $\psi$ for $i=0,1,2,3$.
\end{itemize}
We will take the global Whittaker datum determined by $\delta$ and $\varPsi$ as in \S \ref{ss:whit}.
Let $\W$ be the $n$-dimensional anisotropic skew-Hermitian space over $\E$ such that
\begin{itemize}
\item the signature of $\W_{v_i}$ is $(p,q)$ for $i=0,1$;
\item the signature of $\W_{v_i}$ is $(n,0)$ for $i=2,3$;
\item $\epsilon(\W_v) = 1$ for all nonarchimedean places $v$ of $\F$.
\end{itemize}
Similarly, let $\V$ be the $m$-dimensional anisotropic Hermitian space over $\E$ such that
\begin{itemize}
\item the signature of $\V_{v_i}$ is $(r,s)$ for $i=0,1$;
\item the signature of $\V_{v_i}$ is $(m,0)$ for $i=2,3$;
\item $\epsilon(\V_v) = 1$ for all nonarchimedean places $v$ of $\F$.
\end{itemize}
Note that such spaces $\W$ and $\V$ exist since $\prod_v \epsilon(\W_v) = \prod_v \epsilon(\V_v) = 1$.
By Lemma \ref{l:csd-char-global}, we may take two (unitary) characters $\chi_\V, \chi_\W$ of $\A_\E^\times/\E^\times$ such that
\begin{alignat*}{2}
 \chi_\V|_{\A^\times} & = \omega_{\E/\F}^m, \quad & \chi_{\V, v_i} & = \chi_V, \\
 \chi_\W|_{\A^\times} & = \omega_{\E/\F}^n, \quad & \chi_{\W, v_i} & = \chi_W
\end{alignat*}
for $i=0,1,2,3$.
Similarly, by Lemmas \ref{l:csd-char-local} and \ref{l:csd-char-global}, we may take conjugate-selfdual characters $\chi_1, \dots, \chi_n$ of $\A_\E^\times/\E^\times$ with sign $(-1)^{n-1}$ satisfying the following conditions:
\begin{itemize}
\item $\chi_{i,v_0} = \chi_{\kappa_i}$ for all $i$;
\item $\chi_{i,v_1} = \chi_{\tilde{\kappa}_i}$ for all $i$;
\item $\chi_{i,v_2} = \chi_{i,v_3} = \chi_{\kappa_{0,i}}$ for all $i$;
\item if $m \equiv n \bmod 2$, then 
\begin{equation}
\label{eq:assumption-epsilon}
 \epsilon(\tfrac{1}{2}, \chi_{i,v} \chi_{\V,v}^{-1}, \varPsi_{2,v}^\E) = 1 
\end{equation}
for all nonarchimedean places $v$ of $\F$ such that $\E_v \ne \F_v \times \F_v$, where $\varPsi_{2,v}^\E$ is the character of $\E_v$ given by $\varPsi_{2,v}^\E(x) = \varPsi_v(\Tr_{\E_v/\F_v}(\delta x))$.
\end{itemize}
In particular, $\chi_1, \dots, \chi_n, \chi_\V$ are pairwise distinct.

Define a global $A$-parameter $\varPhi$ for $\U_n$ by
\[
 \varPhi = \chi_1 \oplus \dots \oplus \chi_n,
\]
so that
\[
 \varPhi_{v_0} = \phi, \quad \varPhi_{v_1} = \phi_+, \quad \varPhi_{v_2} = \varPhi_{v_3} = \phi_0.
\]
Let $S_\varPhi$ be the global component group of $\varPhi$, which is defined formally as a free $\Z/2\Z$-module
\[
 S_\varPhi = (\Z/2\Z) e_1 \oplus \dots \oplus (\Z/2 \Z) e_n,
\]
where $e_i$ corresponds to $\chi_i$.
For each place $v$ of $\F$, let $S_{\varPhi_v}$ be the local component group of $\varPhi_v$ equipped with a natural homomorphism $S_\varPhi \rightarrow S_{\varPhi_v}$.
We denote by $e_{i,v}$ the image of $e_i$ in $S_{\varPhi_v}$.
Recall the compact group $S_{\varPhi, \A} = \prod_v S_{\varPhi_v}$ equipped with the diagonal map $\Delta : S_\varPhi \rightarrow S_{\varPhi, \A}$.
Define a continuous character $\eta = \bigotimes_v \eta_v$ of $S_{\varPhi,\A}$ by
\begin{itemize}
\item $\eta_{v_0} = \eta_{v_1} = \eta$;
\item $\eta_{v_2} = \eta_{v_3} = \eta_0$;
\item $\eta_v = \mathbbm{1}$ for all nonarchimedean places $v$ of $\F$.
\end{itemize}
Note that $\eta_v(e_{1,v} + \dots + e_{n,v}) = \epsilon(\W_v)$ for all $v$.
Let 
\[
 \varPi_v = \pi(\varPhi_v, \eta_v)
\]
be the irreducible tempered representation in the local $L$-packet $\Pi_{\varPhi_v}(\U(\W_v))$ associated to $\eta_v$.
Then we may form an irreducible representation $\varPi = \bigotimes_v \varPi_v$ of $\U(\W)(\A)$.
Since
\[
 (\eta \circ \Delta) (e_i) = \prod_v \eta_v(e_{i,v}) = 1
\]
for all $i$, it follows from Arthur's multiplicity formula \eqref{eq:amf} that $\varPi$ is automorphic.
Moreover, since $m>n$ and $\chi_i \chi_{\V}^{-1} \ne \mathbbm{1}$ for all $i$, the partial standard $L$-function
\[
 L^S(s, \varPi, \chi_\V^{-1}) = L^S(s, \chi_1 \chi_{\V}^{-1}) \cdots L^S(s, \chi_n \chi_{\V}^{-1})
\]
is holomorphic and nonzero at $s = \frac{1}{2}(m-n+1)$.

We consider the global theta lift $\varSigma = \theta_{\V, \W, \chi_\V, \chi_\W, \varPsi}(\varPi)$ to $\U(\V)(\A)$.
Recall that the local theta lift $\varSigma_v = \theta_{\V_v, \W_v, \chi_{\V,v}, \chi_{\W,v}, \varPsi_v}(\varPi_v)$ to $\U(\V_v)$ is nonzero by assumption if $v = v_0$ and by Lemmas \ref{l:local-theta-1}, \ref{l:local-theta-2}, \ref{l:local-theta-3} if $v \ne v_0$.
Hence there exist $\varphi_{1,v}, \varphi_{2,v} \in \omega_{\V_v,\W_v,\chi_{\V,v},\chi_{\W,v},\varPsi_v}$ and $f_{1,v}, f_{2,v} \in \varPi_v$ such that
\[
 \ZZ(\varphi_{1,v}, \varphi_{2,v}, f_{1,v}, f_{2,v}) \ne 0
\]
by Proposition \ref{p:key} if $v$ is real and by \cite[Proposition 11.5]{gqt}, \cite[Lemma 8.6]{yamana} if $v$ is nonarchimedean.
As explained in \S \ref{ss:global-theta}, this implies that $\varSigma$ is nonzero.
Thus we obtain an irreducible automorphic representation $\varSigma = \bigotimes_v \varSigma_v$ of $\U(\V)(\A)$.

Finally, we derive the information about $\varSigma_{v_0} = \theta_{r,s}(\pi)$ from the knowledge of $\varSigma_v$ for $v \ne v_0$ and Arthur's multiplicity formula.
Define a global $A$-parameter $\varPhi'$ for $\U_m$ by
\[
 \varPhi' = \chi_1 \chi_\V^{-1} \chi_\W \oplus \dots \oplus \chi_n \chi_\V^{-1} \chi_\W \oplus (\chi_\W \boxtimes S_{m-n}),
\]
so that
\[
 \varPhi'_{v_0} = \phi', \quad \varPhi'_{v_1} = \phi_+'.
\]
Let $S_{\varPhi'}$ be the global component group of $\varPhi'$, which is defined formally as a free $\Z/2\Z$-module
\[
 S_{\varPhi'} = (\Z/2\Z) e'_1 \oplus \dots \oplus (\Z/2 \Z) e'_n \oplus (\Z/2 \Z) e'_0,
\]
where $e'_i$ corresponds to $\chi_1 \chi_\V^{-1} \chi_\W$ (resp.~$\chi_\W \boxtimes S_{m-n}$) if $i \ne 0$ (resp.~$i=0$).
For each place $v$ of $\F$, let $S_{\varPhi'_v}$ be the local component group of $\varPhi'_v$ equipped with a natural homomorphism $S_{\varPhi'} \rightarrow S_{\varPhi'_v}$.
We denote by $e'_{i,v}$ the image of $e'_i$ in $S_{\varPhi'_v}$.
By Lemmas \ref{l:local-theta-2} and \ref{l:local-theta-3}, $\varSigma$ occurs in the near equivalence class $L^2_{\varPhi'}(\U(\V))$.
Hence $\varSigma_v$ belongs to the local $A$-packet $\Pi_{\varPhi'_v}(\U(\V_v))$ for all $v$.
Since $\Pi_{\varPhi'_v}(\U(\V_v))$ is multiplicity-free, we may associate to $\varSigma_v$ a character $\eta'_v$ of $S_{\varPhi'_v}$.
Then it follows from Arthur's multiplicity formula \eqref{eq:amf} and Lemma \ref{l:epsilon} that
\[
 \prod_v \eta_v'(e_{i,v}') =
 \begin{cases}
  \epsilon(\tfrac{1}{2}, \chi_i \chi_{\V}^{-1}) & \text{if $i \ne 0$;} \\
  \epsilon(\tfrac{1}{2}, \varPi, \chi_{\V}^{-1}) & \text{if $i = 0$.}
 \end{cases}
\]
However, it follows from Lemma \ref{l:csd-char-epsilon} and \eqref{eq:assumption-epsilon} that
\[
 \epsilon(\tfrac{1}{2}, \chi_i \chi_{\V}^{-1}) = \prod_v \epsilon(\tfrac{1}{2}, \chi_{i,v} \chi_{\V,v}^{-1}, \varPsi_{2,v}^\E) = 1,
\]
so that 
\[
 \epsilon(\tfrac{1}{2}, \varPi, \chi_{\V}^{-1})
 = \epsilon(\tfrac{1}{2}, \chi_1 \chi_{\V}^{-1}) \cdots \epsilon(\tfrac{1}{2}, \chi_n \chi_{\V}^{-1}) = 1.
\]
Hence we have
\[
  \prod_v \eta_v'(e_{i,v}') = 1
\]
for all $i$.
On the other hand, by Lemmas \ref{l:local-theta-2} and \ref{l:local-theta-3}, we have $\eta_v' = \mathbbm{1}$ for all nonarchimedean places $v$ of $\F$.
Since $\eta_{v_2}' = \eta_{v_3}'$, we conclude that
\[
 \eta_{v_0}'(e_{i,v_0}) = \eta_{v_1}'(e_{i,v_1}) 
\]
for all $i$.
But by \eqref{eq:theta(pi+)}, $\eta_{v_1}'$ agrees with the character $\eta'$ of $S_{\phi'_+} \cong \widetilde{S}_{\phi'}$ as in \S \ref{sss:local-theta-real}, so that
\[
 \theta_{r,s}(\pi) = \varSigma_{v_0} = \sigma(\varPhi_{v_0}', \eta_{v_0}') = \sigma(\phi', \eta').
\]
This combined with Lemma \ref{l:Aq-eta} proves Theorem \ref{t:main-lds}\eqref{main-lds-1}.

\section{Proof of Theorem \ref{t:main-lds}\eqref{main-lds-2}}

In this section, we consider the theta lifting from $\U(p,q)$ to $\U(r,s)$ with $p+q=n$ and $r+s=m$ in the case $m \le n$ and determine the theta lifts of (limits of) discrete series representations by switching the roles of $\U(p,q)$ and $\U(r,s)$.

Let $\pi$ be a (limit of) discrete series representation of $\U(p,q)$.
Assume that $\theta_{r,s}(\pi) \ne 0$.
If $m=n$ or $n-1$, then Theorem \ref{t:main-lds} was proved by Paul \cite{paul1, paul2}.
Thus we assume that $m \le n-2$.
Put $k = n-m$.
Write $\pi = \pi(\phi, \eta)$, where $\phi$ is a (limit of) discrete series $L$-parameter for $\U_n$ and $\eta$ is a character of $S_\phi$.
Write 
\[
 \phi = (m_1 \chi_{\kappa_1} \oplus \dots \oplus m_a \chi_{\kappa_a}) \otimes \chi_V
\]
and 
\[
 S_\phi = (\Z/2\Z) e_1 \oplus \dots \oplus (\Z/2\Z) e_a,
\]
where 
\begin{itemize}
\item $\kappa_i \in \Z + \frac{k-1}{2}$;
\item $\kappa_1 > \dots > \kappa_a$;
\item $m_i$ is a positive integer;
\item $m_1 + \dots + m_a = n$,
\end{itemize}
and $e_i$ corresponds to $\chi_{\kappa_i} \chi_V$.
Put $n_i = m_1 + \dots + m_{i-1}$ and
\[
 (p_i, q_i) =
 \begin{cases}
  (\frac{m_i+1}{2}, \frac{m_i-1}{2}) & \text{if $m_i$ is odd and $\eta(e_i) = (-1)^{n_i}$;} \\
  (\frac{m_i-1}{2}, \frac{m_i+1}{2}) & \text{if $m_i$ is odd and $\eta(e_i) = (-1)^{n_i+1}$;} \\
  (\frac{m_i}{2}, \frac{m_i}{2}) & \text{if $m_i$ is even.}
 \end{cases}
\]
Then by Corollary \ref{c:going-down}, there exist $0 \le i_0 \le a-k$ and $\epsilon_0 = \pm 1$ such that 
\begin{itemize}
\item $\kappa_{i_0 + i} = \frac{k+1}{2} - i$ for all $1 \le i \le k$;
\item $m_{i_0 + i}$ is odd for all $1 < i < k$;
\item $\eta(e_{i_0 + 1}) = \epsilon_0 \times 
\begin{cases}
 (-1)^{n_{i_0 + 1}} & \text{if $m_{i_0 + 1}$ is odd;} \\
 (-1)^{n_{i_0 + 1}+1} & \text{if $m_{i_0 + 1}$ is even;}
\end{cases}
$
\item $\eta(e_{i_0 + i}) = \epsilon_0 \cdot (-1)^{n_{{i_0 + i}}}$ for all $1 < i \le k$.
\end{itemize}
In particular, if we write $\pi = \nA_\b(\lambda)$ as in \eqref{eq:lds}, then $\b$ and $\lambda$ satisfy the conditions in Theorem \ref{t:main-lds}\eqref{main-lds-2}.
Put 
\[
\begin{aligned}
 p^+ & = p_1 + \dots + p_{i_0}, \, & 
 p^- & = p_{i_0+k+1} + \dots + p_a, \\
 q^+ & = q_1 + \dots + q_{i_0}, \, & 
 q^- & = q_{i_0+k+1} + \dots + q_a,
\end{aligned}
\]
and 
\[
 l = 
\begin{cases}
 q_{i_0 + 1} + \dots + q_{i_0 + k}
 & \text{if $\epsilon_0 = +1$;} \\
 p_{i_0 + 1} + \dots + p_{i_0 + k}
 & \text{if $\epsilon_0 = -1$.}
\end{cases}
\]
Then we have 
\begin{align*}
 (p,q) & = 
\begin{cases}
 (p^+ + p^- + l + k, q^+ + q^- + l)
 & \text{if $\epsilon_0 = +1$, $m_{i_0+1}$ is odd, $m_{i_0+k}$ is odd;} \\
 (p^+ + p^- + l + k-1, q^+ + q^- + l)
 & \text{if $\epsilon_0 = +1$, $m_{i_0+1}$ is odd, $m_{i_0+k}$ is even;} \\
 (p^+ + p^- + l + k-1, q^+ + q^- + l)
 & \text{if $\epsilon_0 = +1$, $m_{i_0+1}$ is even, $m_{i_0+k}$ is odd;} \\
 (p^+ + p^- + l + k-2, q^+ + q^- + l)
 & \text{if $\epsilon_0 = +1$, $m_{i_0+1}$ is even, $m_{i_0+k}$ is even;} \\
 (p^+ + p^- + l, q^+ + q^- + l + k)
 & \text{if $\epsilon_0 = -1$, $m_{i_0+1}$ is odd, $m_{i_0+k}$ is odd;} \\
 (p^+ + p^- + l, q^+ + q^- + l + k-1)
 & \text{if $\epsilon_0 = -1$, $m_{i_0+1}$ is odd, $m_{i_0+k}$ is even;} \\
 (p^+ + p^- + l, q^+ + q^- + l + k-1)
 & \text{if $\epsilon_0 = -1$, $m_{i_0+1}$ is even, $m_{i_0+k}$ is odd;} \\
 (p^+ + p^- + l, q^+ + q^- + l + k-2)
 & \text{if $\epsilon_0 = -1$, $m_{i_0+1}$ is even, $m_{i_0+k}$ is even,}
\end{cases} \\
 (r,s) & = 
\begin{cases}
 (p^+ + q^- + l, p^- + q^+ + l)
 & \text{if $\epsilon_0 = +1$, $m_{i_0+1}$ is odd, $m_{i_0+k}$ is odd;} \\
 (p^+ + q^- + l, p^- + q^+ + l-1)
 & \text{if $\epsilon_0 = +1$, $m_{i_0+1}$ is odd, $m_{i_0+k}$ is even;} \\
 (p^+ + q^- + l-1, p^- + q^+ + l)
 & \text{if $\epsilon_0 = +1$, $m_{i_0+1}$ is even, $m_{i_0+k}$ is odd;} \\
 (p^+ + q^- + l-1, p^- + q^+ + l-1)
 & \text{if $\epsilon_0 = +1$, $m_{i_0+1}$ is even, $m_{i_0+k}$ is even;} \\
 (p^+ + q^- + l, p^- + q^+ + l)
 & \text{if $\epsilon_0 = -1$, $m_{i_0+1}$ is odd, $m_{i_0+k}$ is odd;} \\
 (p^+ + q^- + l-1, p^- + q^+ + l)
 & \text{if $\epsilon_0 = -1$, $m_{i_0+1}$ is odd, $m_{i_0+k}$ is even;} \\
 (p^+ + q^- + l, p^- + q^+ + l-1)
 & \text{if $\epsilon_0 = -1$, $m_{i_0+1}$ is even, $m_{i_0+k}$ is odd;} \\
 (p^+ + q^- + l-1, p^- + q^+ + l-1)
 & \text{if $\epsilon_0 = -1$, $m_{i_0+1}$ is even, $m_{i_0+k}$ is even.}
\end{cases}
\end{align*}

Assume first that $m_i = 1$ for all $i_0 + 1 < i < i_0 + k$.
We only consider the case $\epsilon_0 = +1$; the case $\epsilon_0 = -1$ is similar.
Write $\pi = \nA_\b(\lambda)$ as in \eqref{eq:lds} and put 
\[
 p' = p_{i_0+1}, \quad
 p'' = p_{i_0+k}, \quad
 q' = q_{i_0+1}, \quad
 q'' = q_{i_0+k}.
\]
Then $\lambda$ is of the form 
\begin{align*}
 \lambda & = \bigg( \alpha_1, \dots, \alpha_{p^+},
 \underbrace{\frac{k-1}{2}, \dots, \frac{k-1}{2}}_{p'},
 \frac{k-3}{2}, \frac{k-5}{2}, \dots, -\frac{k-3}{2},
 \underbrace{-\frac{k-1}{2}, \dots, -\frac{k-1}{2}}_{p''},
 \beta_1, \dots, \beta_{p^-}, \\
 & \phantom{{} = \bigg(} \gamma_1, \dots, \gamma_{q^+}, 
 \underbrace{\frac{k-1}{2}, \dots, \frac{k-1}{2}}_{q'},
 \underbrace{-\frac{k-1}{2}, \dots, -\frac{k-1}{2}}_{q''},
 \delta_1, \dots, \delta_{q^-} \bigg)
 + \bigg( \frac{m_0}{2}, \dots, \frac{m_0}{2} \bigg),
\end{align*}
where $\alpha_i, \gamma_j > \frac{k-1}{2}$ and $\beta_i, \delta_j < -\frac{k-1}{2}$, and $\b = \q(x)$ is associated to
\begin{align*}
 x & = (x^+_1, \dots, x^+_{p^+}, x'_1, \dots, x'_{p'}, z_2, z_3, \dots, z_{k-1}, x''_1, \dots, x''_{p''}, x^-_1, \dots, x^-_{p^-}, \\
 & \phantom{{} = (} y^+_1, \dots, y^+_{q^+}, y'_1, \dots, y'_{q'}, y''_1, \dots, y''_{q''}, y^-_1, \dots, y^-_{q^-}) 
\end{align*}
such that 
\begin{align*}
&
\begin{cases}
 \mathrlap{x'_1 > y'_1 > \dots > x'_{q'} > y'_{q'} > x'_{p'}}\hphantom{x''_1 > y''_1 > \dots > x''_{q''} > y''_{q''} > x''_{p''}} &
 \text{if $m_{i_0+1}$ is odd;} \\
 y'_1 > x'_1 > \dots > y'_{q'} > x'_{p'} &
 \text{if $m_{i_0+1}$ is even,}
\end{cases} \\
&
\begin{cases}
 x''_1 > y''_1 > \dots > x''_{q''} > y''_{q''} > x''_{p''} &
 \text{if $m_{i_0+k}$ is odd;} \\
 x''_1 > y''_1 > \dots > x''_{p''} > y''_{q''} &
 \text{if $m_{i_0+k}$ is even.}
\end{cases} 
\end{align*}
We assume without loss of generality that
\[
 x'_{p'} > 0 > x''_1.
\]
Put $\sigma = \nA_{\b'}(\lambda')$, where
\begin{align*}
 \lambda' & = \bigg( \alpha_1, \dots, \alpha_{p^+},
 \underbrace{\frac{k-1}{2}, \dots, \frac{k-1}{2}}_{p'-1}, 
 \underbrace{-\frac{k-1}{2}, \dots, -\frac{k-1}{2}}_{q''},
 \delta_1, \dots, \delta_{q^-}, \\
 & \phantom{{} = \bigg(} \gamma_1, \dots, \gamma_{q^+}, 
 \underbrace{\frac{k-1}{2}, \dots, \frac{k-1}{2}}_{q'}, 
 \underbrace{-\frac{k-1}{2}, \dots, -\frac{k-1}{2}}_{p''-1},
 \beta_1, \dots, \beta_{p^-} \bigg)
 + \bigg( \frac{n_0}{2}, \dots, \frac{n_0}{2} \bigg)
\end{align*}
and $\b' = \q(x')$ with
\begin{align*}
 x' & = (x^+_1, \dots, x^+_{p^+}, x'_1, \dots, x'_{p'-1}, y''_1, \dots, y''_{q''}, y^-_1, \dots, y^-_{q^-}, \\
 & \phantom{{} = (} y^+_1, \dots, y^+_{q^+}, y'_1, \dots, y'_{q'}, x''_2, \dots, x''_{p''}, x^-_1, \dots, x^-_{p^-}).
\end{align*}
Then $\sigma$ is a (limit of) discrete series representation of $\U(r,s)$.
By Corollary \ref{c:going-up}, we have $\theta_{p,q}(\sigma) \ne 0$.
Hence it follows from Theorem \ref{t:main-lds}\eqref{main-lds-1} proved in the previous section that
\[
 \theta_{p,q}(\sigma) = \nA_{\tilde{\q}}(\tilde{\lambda}),
\]
where
\begin{align*}
 \tilde{\lambda} & = \bigg( \alpha_1, \dots, \alpha_{p^+},
 \underbrace{\frac{k-1}{2}, \dots, \frac{k-1}{2}}_{p' - 1},
 \underbrace{0, \dots, 0 \vphantom{\frac{k-1}{2}}}_{k},
 \underbrace{-\frac{k-1}{2}, \dots, -\frac{k-1}{2}}_{p'' - 1},
 \beta_1, \dots, \beta_{p^-}, \\
 & \phantom{{} = \bigg(} \gamma_1, \dots, \gamma_{q^+}, 
 \underbrace{\frac{k-1}{2}, \dots, \frac{k-1}{2}}_{q'}, 
 \underbrace{-\frac{k-1}{2}, \dots, -\frac{k-1}{2}}_{q''},
 \delta_1, \dots, \delta_{q^-} \bigg) + \bigg( \frac{m_0}{2}, \dots, \frac{m_0}{2} \bigg)
\end{align*}
and $\tilde{\q} = \q(\tilde{x})$ with
\begin{align*}
 \tilde{x} & = (x^+_1, \dots, x^+_{p^+}, x'_1, \dots, x'_{p'-1}, \underbrace{0, \dots, 0}_k, x''_2, \dots, x''_{p''}, x^-_1, \dots, x^-_{p^-}, \\
 & \phantom{{} = (} y^+_1, \dots, y^+_{q^+}, y'_1, \dots, y'_{q'}, y''_1, \dots, y''_{q''}, y^-_1, \dots, y^-_{q^-}).
\end{align*}
Since $\tilde{\l} \subset \k$ (where $\tilde{\l}$ is the Levi component of $\tilde{\q}$ and $\k$ is the complexified Lie algebra of $K$), we have
\[
 \nA_{\tilde{\q}}(\tilde{\lambda}) = \nA_{\b}(\lambda)
\]
by induction in stages \cite[Corollary 11.86]{kv}.
Thus we have shown that $\theta_{p,q}(\sigma) = \pi$, so that $\theta_{r,s}(\pi) = \sigma$ as desired.

Assume next that $m_{i_1} \ge 3$ for some $i_0 + 1 < i_1 < i_0 + k$.
In particular, we have $p,q,r,s > 0$.
Define a (limit of) discrete series $L$-parameter $\phi'$ for $\U_m$ by
\[
 \phi' = (m'_1 \chi_{\kappa_1} \oplus \dots \oplus m'_a \chi_{\kappa_a}) \otimes \chi_W
\]
and write
\[
 S_{\phi'} = (\Z/2\Z) e'_1 \oplus \dots \oplus (\Z/2\Z) e'_a,
\]
where
\[
 m'_i = 
 \begin{cases}
  m_i -1 & \text{if $i_0 + 1 \le i \le i_0 + k$;} \\
  m_i & \text{if $i \le i_0$ or $i > i_0+k$}
 \end{cases}
\]
and $e'_i$ corresponds to $\chi_{\kappa_i} \chi_W$.
(When $m'_i = 0$, we interpret $e'_i$ as zero.)
Define a character $\eta'$ of $S_{\phi'}$ by
\[
 \eta'(e'_i) = 
 \begin{cases}
  \epsilon_0 \cdot (-1)^{n_{i_0+2} + 1} & \text{if $i_0 + 1 \le i \le i_0 + k$ and $m'_i > 0$;} \\
  \zeta_i \cdot \eta(e_i) & \text{if $i \le i_0$ or $i > i_0+k$,}
 \end{cases}
\]
where 
\[
 \zeta_i = 
 \begin{cases}
  +1 & \text{if $k$ is even and $i \le i_0$;} \\
  -1 & \text{if $k$ is even and $i > i_0 + k$;} \\
  +1 & \text{if $k$ is odd.}
 \end{cases}
\]
Put $\sigma = \pi(\phi', \eta')$, so that $\sigma$ is a (limit of) discrete series representation of $\U(r,s)$.

\begin{lem}
We have $\sigma = \nA_\q(\lambda')$, where $\q$ and $\lambda'$ are as in Theorem \ref{t:main-lds}\eqref{main-lds-2}.
\end{lem}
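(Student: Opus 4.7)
The argument is a direct bookkeeping verification using the explicit description of (limit of) discrete series $L$-packets from \S \ref{sss:packets-real-lds}. Since $\phi'$ is itself a (limit of) discrete series $L$-parameter for $\U_m$ (its $\kappa_i$ remain distinct with the required parity), the recipe in that subsection presents $\sigma = \pi(\phi',\eta')$ as $\nA_{\b'}(\mu')$ for an explicit Harish-Chandra parameter $\mu'$ and $\theta$-stable Borel $\b'$; the task is then to identify $(\b',\mu')$ with the $(\q,\lambda')$ given in Theorem \ref{t:main-lds}\eqref{main-lds-2}.

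The recipe assigns, to each individual character of $\phi'$ listed in decreasing order of $\kappa$, a pair $(1,0)$ or $(0,1)$ according to whether $\eta'$ on the corresponding generator of $S_{\phi'}$ equals $(-1)^{j-1}$ or $(-1)^j$, where $j$ is the position in the list; $\mu'$ and $\b'$ are then assembled by placing each $\kappa_i+\frac{n_0}{2}$ in the first or second slot accordingly. The verification splits into three ranges.

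For the outer blocks $i \le i_0$ the multiplicities $m'_i = m_i$ are unchanged and the positions agree with those in $\phi$; since $\zeta_i = +1$ there, the pairs read off from $\eta'(e'_i) = \eta(e_i)$ coincide with those of $\pi$, producing the $\alpha$'s in the first slot and the $\gamma$'s in the second. For the outer blocks $i > i_0+k$ the positions in $\phi'$ are globally shifted by $-k$; the factor $\zeta_i$ (which is $+1$ when $k$ is odd and $-1$ when $k$ is even) is chosen precisely so that the net effect of the shift and the sign is to \emph{swap} each pair $(1,0)\leftrightarrow (0,1)$ relative to $\pi$. This swap exactly implements the migration of the $\beta$'s from the first to the second slot and of the $\delta$'s from the second to the first, and yields the counts that feed into $p^-, q^-$ on the ``correct'' sides of $\lambda'$ in the theorem.

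For the middle blocks $i_0+1 \le i \le i_0+k$ the multiplicity drops by one, and the common value $\eta'(e'_i) = \epsilon_0\cdot(-1)^{n_{i_0+2}+1}$, combined with the cumulatively shifted positions in $\phi'$, determines new pairs in each block. A short case check on the sign $\epsilon_0$ and on the parities of $m_{i_0+1}$ and $m_{i_0+k}$ shows that the numbers of $(1,0)$'s and $(0,1)$'s produced in each middle block are precisely the $(r_i, s_i)$ of Theorem \ref{t:main-lds}\eqref{main-lds-2}, and that the ordering conditions on the $(z^{(i)}, w^{(i)})$ entries of $x'$ fall out of the choice of any strictly decreasing $\tilde{x}_1 > \cdots > \tilde{x}_{m}$ feeding the recipe. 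The main (though entirely elementary) obstacle is the parity bookkeeping in this middle range: one must track $n_{i_0+2}\bmod 2$ together with $\epsilon_0$ and the parities of $m_{i_0+1}, m_{i_0+k}$ to see that the recipe produces the claimed boundary types $p_i-q_i=\pm 1,0$ on exactly the correct side. Once the pairs are verified in all three ranges, the assembly of $\mu'$ and $\b'$ via \S \ref{sss:packets-real-lds} reproduces $\lambda'$ and $\q$ of Theorem \ref{t:main-lds}\eqref{main-lds-2}, completing the identification $\sigma = \nA_\q(\lambda')$.
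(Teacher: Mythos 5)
Your proposal is correct and takes essentially the same approach as the paper: a range-by-range bookkeeping verification that the recipe of \S \ref{sss:packets-real-lds} applied to $(\phi',\eta')$ reproduces the $(\q,\lambda')$ of Theorem \ref{t:main-lds}\eqref{main-lds-2}, tracking the shift $n'_i = n_i - k$ and the correction $\zeta_i$ on the right outer block, the identity $n'_i = n_i$ on the left outer block, and the parity bookkeeping ($n'_{i_0+1} = n_{i_0+2} - m_{i_0+1}$, $n'_{i_0+i} \equiv n_{i_0+2}-1 \bmod 2$) in the middle range. The paper's own proof records precisely these three bullet points and concludes, so the two arguments coincide in structure and content.
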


\begin{proof}
Put $n_i' = m_1' + \dots + m_{i-1}'$.
Then we have the following.
\begin{itemize}
\item
We have
\begin{align*}
\eta(e_i) & = \epsilon_0 \times 
\begin{cases}
 (-1)^{n_i} & \text{if $i=i_0+1$ and $m_i$ is odd;} \\
 (-1)^{n_i+1} & \text{if $i=i_0+1$ and $m_i$ is even;} \\
 (-1)^{n_i} & \text{if $i_0+1 < i \le i_0+k$,}
\end{cases} \\
\eta'(e_i') & = \epsilon_0 \times 
\begin{cases}
 (-1)^{n_i'} & \text{if $i=i_0+1$ and $m_i$ is odd;} \\
 (-1)^{n_i'+1} & \text{if $i=i_0+1$ and $m_i$ is even;} \\
 (-1)^{n_i'} & \text{if $i_0+1 < i \le i_0+k$,}
\end{cases}
\end{align*}
noting that $n_{i_0+1}' = n_{i_0+2} - m_{i_0+1}$, $n_{i_0+2}' = n_{i_0+2} - 1$, and $n_i' \equiv n'_{i_0+2} \bmod 2$ for $i_0+1 < i \le i_0+k$.
(When $m_i'=0$, we ignore the corresponding identity.)
\item 
If $i \le i_0$, then we have $n_i' = n_i$, so that $\eta'(e_i') = (-1)^{n_i'}$ if and only if $\eta(e_i) = (-1)^{n_i}$.
\item 
If $i > i_0 + k$, then we have $n_i' = n_i-k$, so that $\eta'(e_i') = (-1)^{n_i'}$ if and only if $\eta(e_i) = (-1)^{n_i+1}$.
\end{itemize}
This implies the assertion.
\end{proof}

Thus Theorem \ref{t:main-lds} in this case amounts to
\[
 \theta_{r,s}(\pi) = \sigma.
\]
We now proceed by induction on $m'_{i_0+2} + \dots + m'_{i_0+k-1}$.
Define (limits of) discrete series $L$-parameters $\phi_0$ and $\phi_0'$ for $\U_{n-2}$ and $\U_{m-2}$, respectively, by
\begin{align*}
 \phi_0 & = (m_1 \chi_{\kappa_1} \oplus \cdots \oplus m_{i_1-1} \chi_{\kappa_{i_1-1}} \oplus (m_{i_1} - 2) \chi_{\kappa_{i_1}} \oplus m_{i_1+1} \chi_{\kappa_{i_1+1}} \oplus \cdots \oplus m_a \chi_{\kappa_a}) \otimes \chi_V, \\
 \phi'_0 & = (m'_1 \chi_{\kappa_1} \oplus \cdots \oplus m'_{i_1-1} \chi_{\kappa_{i_1-1}} \oplus (m'_{i_1} - 2) \chi_{\kappa_{i_1}} \oplus m'_{i_1+1} \chi_{\kappa_{i_1+1}} \oplus \cdots \oplus m'_a \chi_{\kappa_a}) \otimes \chi_W.
\end{align*}
Then we have a natural isomorphism $S_{\phi_0} \cong S_\phi$ and a natural embedding $S_{\phi'_0} \hookrightarrow S_{\phi'}$, which is an isomorphism if and only if $m_{i_1} \ge 5$.
Put $\pi_0 = \pi(\phi_0, \eta)$ and $\sigma_0 = \pi(\phi'_0, \eta'_0)$, where $\eta$ is viewed as a character of $S_{\phi_0}$ and $\eta_0'$ is the restriction of $\eta'$ to $S_{\phi'_0}$, so that $\pi_0$ and $\sigma_0$ are (limits of) discrete series representations of $\U(p-1, q-1)$ and $\U(r-1, s-1)$, respectively.
Then $\pi$ and $\sigma$ are subrepresentations of $I(\chi, \pi_0)$ and $I(\chi \chi_V^{-1} \chi_W, \sigma_0)$, respectively, where $\chi = \chi_{\kappa_{i_1}} \chi_V$.
In fact, $I(\chi, \pi_0)$ is irreducible and 
\[
 \pi = I(\chi, \pi_0).
\]
Since $\theta_{r,s}(\pi) \ne 0$, we have $\theta_{r-1,s-1}(\pi_0) \ne 0$ by Corollary \ref{c:jacquet}, so that $\theta_{r-1,s-1}(\pi_0) = \sigma_0$ by the induction hypothesis.
Hence by the induction principle \cite[Theorem 4.5.5]{paul1}, $\theta_{r,s}(\pi)$ is a subquotient of $I(\chi \chi_V^{-1} \chi_W, \sigma_0)$.
If $m_{i_1} \ge 5$, then $I(\chi \chi_V^{-1} \chi_W, \sigma_0)$ is irreducible and 
\[
 \sigma = I(\chi \chi_V^{-1} \chi_W, \sigma_0), 
\]
so that $\theta_{r,s}(\pi) = \sigma$ as desired.
Thus we assume that $m_{i_1} = 3$.
Then we have $S_{\phi'} = S_{\phi'_0} \oplus (\Z/2\Z) e'_{i_1}$ and
\[
 I(\chi \chi_V^{-1} \chi_W, \sigma_0) = \sigma \oplus \sigma'
\]
with $\sigma' = \pi(\phi', \eta'')$, where $\eta''$ is the character of $S_{\phi'}$ given by 
\[
 \eta''|_{S_{\phi_0'}} = \eta_0', \quad
 \eta''(e'_{i_1}) = \epsilon_0 \cdot (-1)^{n_{i_0+2}}.
\]
To prove $\theta_{r,s}(\pi) = \sigma$, it suffices to show that
\[
 \theta_{p,q}(\sigma') = 0.
\]
We only consider the case $\epsilon_0 = +1$; the case $\epsilon_0 = -1$ is similar.
Let $k_0 = -1$ or $0$ be such that $k_0 \equiv k \bmod 2$ and put
\[
 t = \frac{k+k_0}{2}.
\]
As in \S \ref{ss:atobe-def}, we define the invariants of $\sigma'$ (with respect to $k_0$ and $\chi_W$).
Then we have $k_{\sigma'} = k_0$ and 
\[
 (r_{\sigma'}, s_{\sigma'}) = 
\begin{cases}
 (p^+ + p^- + l, q^+ + q^- + l)
 & \text{if $\epsilon_0 = +1$, $m_{i_0+1}$ is odd, $m_{i_0+k}$ is odd;} \\
 (p^+ + p^- + l-1, q^+ + q^- + l)
 & \text{if $\epsilon_0 = +1$, $m_{i_0+1}$ is odd, $m_{i_0+k}$ is even;} \\
 (p^+ + p^- + l-1, q^+ + q^- + l)
 & \text{if $\epsilon_0 = +1$, $m_{i_0+1}$ is even, $m_{i_0+k}$ is odd;} \\
 (p^+ + p^- + l-2, q^+ + q^- + l)
 & \text{if $\epsilon_0 = +1$, $m_{i_0+1}$ is even, $m_{i_0+k}$ is even,}
\end{cases} 
\]
so that
\[
 (p,q) = (r_{\sigma'} + k, s_{\sigma'}).
\]
Moreover, we have the following. 
\begin{itemize}
\item 
If $\kappa_{i_1} > 0$, then we have $(\kappa_{i_1}, +1) \in \XX_{\sigma'}$ but $(\kappa_i, -1) \notin \XX_{\sigma'}$ for all $i_1 < i < i_0 + k$.
Hence we have $(\kappa_{i_1}, +1) \in \CC^+_{\sigma'}(t)$.
\item 
If $\kappa_{i_1} < 0$, then we have $(\kappa_{i_1}, -1) \in \XX_{\sigma'}$ but $(\kappa_i, +1) \notin \XX_{\sigma'}$ for all $i_0 + 1 < i < i_1$.
Hence we have $(\kappa_{i_1}, -1) \in \CC^-_{\sigma'}(t)$.
\item 
If $\kappa_{i_1} = 0$ (so that $k_{\sigma'} = -1$), then we have $(0, +1), (0, -1) \in \XX_{\sigma'}$.
\end{itemize}
Hence by Theorem \ref{t:atobe}, we have $\theta_{p,q}(\sigma') = 0$ as desired.
This completes the proof of Theorem \ref{t:main-lds}\eqref{main-lds-2}.

\section{Proof of Theorem \ref{t:main-temp}}

In this section, we consider the theta lifting from $\U(p,q)$ to $\U(r,s)$ and determine the theta lifts of tempered representations in terms of those of (limits of) discrete series representations.

Let $\pi$ be an irreducible tempered representation of $\U(p,q)$ and write $\pi = I(\xi_1, \dots, \xi_d, \pi_0)$ as in \eqref{eq:temp}.
Assume that $\theta_{r,s}(\pi) \ne 0$.
Then by Corollary \ref{c:jacquet}, we have $d \le \min \{ r, s \}$ and $\theta_{r-d,s-d}(\pi_0) \ne 0$.
Hence by the induction principle \cite[Theorem 4.5.5]{paul1}, $\theta_{r,s}(\pi)$ is a subquotient of
\[
 I(\xi_1 \chi_V^{-1} \chi_W, \dots, \xi_d \chi_V^{-1} \chi_W, \theta_{r-d,s-d}(\pi_0)).
\]
However, it follows Lemma \ref{l:irred-cohom} and Theorem \ref{t:main-lds} that the parabolically induced representation above is irreducible.
Thus we conclude that
\[
 \theta_{r,s}(\pi) = I(\xi_1 \chi_V^{-1} \chi_W, \dots, \xi_d \chi_V^{-1} \chi_W, \theta_{r-d,s-d}(\pi_0)).
\]
This completes the proof of Theorem \ref{t:main-temp}.

\end{document}